\documentclass{amsart}
\usepackage{amsmath,amssymb,amsthm}
\usepackage[all]{xy}
\newtheorem{dfn}{Definition}[section]
\newtheorem{thm}[dfn]{Theorem}
\newtheorem{lem}[dfn]{Lemma}
\newtheorem{cor}[dfn]{Corollary}
\newtheorem{rem}[dfn]{Remark}
\newtheorem{prop}[dfn]{Proposition}
\newtheorem{ex}[dfn]{Example}
\newtheorem{conj}[dfn]{Conjecture}

\newcommand{\Aut}{\mathrm{Aut}}

\newcommand{\rk}{\mathrm{rk}}
\newcommand{\Hodge}{H^{2,2}(X,\mathbb{Z})}
\newcommand{\pr}{\mathrm{pr}}
\newcommand{\hkmfd}{hyperK\"ahler manifold}
\newcommand{\hkmfds}{hyperK\"ahler manifolds}
\newcommand{\htop}{h_{\mathrm{top}}}
\newcommand{\hcat}{h_{\mathrm{cat}}}
\newcommand{\ZZ}{\mathbb{Z}}

\newcommand{\D}{\mathcal{D}}

\newcommand{\stab}{\mathrm{Stab}^*}
\newcommand{\Halg}{ \widetilde{H}^{1,1}(S,\mathbb{Z})}
\newcommand{\A}{\mathcal{A}}
\newcommand{\C}{\mathcal{C}}
\newcommand{\KA}{K_{\mathrm{num}}(\A_X)}
\newcommand{\OH}{O_{\mathrm{Hodge}}}
\newcommand{\OHH}{O^{+}_{\mathrm{Hodge}}}
\begin{document}
\title[AUTOMORPHISMS OF POSITIVE ENTROPY ON SOME HYPERK\"AHLER MANIFOLDS]{AUTOMORPHISMS OF POSITIVE ENTROPY ON SOME HYPERK\"AHLER MANIFOLDS VIA DERIVED AUTOMORPHISMS OF K3 SURFACES}
\author{GENKI OUCHI}
\maketitle
\begin{abstract}
We construct examples of hyperK\"ahler manifolds of Picard number two with automorphisms of positive entropy via derived automorphisms of K3 surfaces of Picard number one. Our hyperK\"ahler manifolds are constructed as moduli spaces of Bridgeland stable objects in derived categories of K3 surfaces. Then automorphisms of positive entropy are induced by derived automorphisms of positive entropy on K3 surfaces.
\end{abstract}
\section{INTRODUCTION} 
\subsection{Motivation and Results}
The dynamical aspect of automorphisms of K3 surfaces was studied by Cantat \cite{C} and McMullen \cite{Mc}. It gave a new perspective of the study of automorphisms of algebraic varieties. After that, Oguiso studied  the case of \hkmfds. A \hkmfd \ is a simply connected compact K\"ahler manifold with an everywhere non-degenerate holomorphic two form unique up to scalar.  Typical examples are K3 surfaces and Hilbert schemes of points on them. More generally, Mukai \cite{Muk84} proved that moduli spaces of Gieseker stable sheaves on projective K3 surfaces give examples of   hyperK\"ahler manifolds deformation equivalent to Hilbert schemes of points on K3 surfaces i.e. of $\text{K3}^{[n]}$-type. In this paper, we study the dynamical aspect of automorphisms of moduli spaces of Gieseker stable sheaves on projective K3 surfaces using technique of derived categories. A pair $(X,f)$ of a compact Hausdorff space $X$ and a homeomorphism $f: X \to X$ is called  a discrete topological dynamical system. Then we are interested in behavior of iteration of the homeomorphism $f$.  The topological entropy $\htop(f)$ of $f$ is a fundamental invariant of the discrete topological dynamical system $(X,f)$, which measures complexity of $(X,f)$. The positivity of the topological entropy $\htop(f)$ means that $(X,f)$ is complicated. For example, a homeomorphism of finite order gives the null entropy. From now on, we focus on dynamics on projective \hkmfds. It is known that birational automorphism groups of  \hkmfds \ of Picard number one are finite groups.

\begin{lem}$($\cite{Huy99}$)$\label{kernel}
Let $M$ be a hyperK\"ahler manifold. Consider a homomorphism $\mathrm{Bir}(M) \to \mathrm{O}(H^2(M,\mathbb{Z})), f^{-1} \mapsto f^*.$ Then it's kernel is a finite group.
\end{lem}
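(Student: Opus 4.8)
The plan is to reduce the assertion to the fact that a compact hyperK\"ahler manifold carries no nonzero global holomorphic vector field, after first promoting elements of the kernel from birational to biregular maps.

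Recall first why the homomorphism is defined at all: by \cite{Huy99} any birational map between hyperK\"ahler manifolds is an isomorphism in codimension two, so (removing codimension~$\ge 2$ loci does not affect $H^{\le 2}$) each $f\in\Bir(M)$ induces a well-defined $f^*\in\mathrm{O}(H^2(M,\ZZ))$ preserving the Beauville--Bogomolov--Fujiki form and the Hodge structure. Write $K$ for the kernel and fix a K\"ahler class $\omega\in H^2(M,\RR)$. If $f\in K$ then $f^*=\mathrm{id}$ on $H^2(M,\RR)$, so in particular $f^*\omega=\omega$ is again a K\"ahler class; by Huybrechts' analysis of the K\"ahler cone of a hyperK\"ahler manifold, a birational self-map that pulls back some K\"ahler class to a K\"ahler class extends to a biregular automorphism, so $f\in\Aut(M)$. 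Hence $K\subseteq\Aut_\omega(M):=\{g\in\Aut(M)\mid g^*\omega=\omega\}$. I expect this passage from birational to biregular to be the main obstacle, since it is where the geometry specific to hyperK\"ahler manifolds (the structure of the movable and K\"ahler cones) genuinely enters; the remainder is formal.

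It then remains to see that $\Aut_\omega(M)$ is finite. The identity component $\Aut^0(M)$ is a connected complex Lie group whose Lie algebra is $H^0(M,T_M)$; contracting the holomorphic symplectic form gives an isomorphism $T_M\cong\Omega^1_M$, and $H^0(M,\Omega^1_M)=H^{1,0}(M)=0$ because $M$ is simply connected, so $\Aut^0(M)=\{1\}$. By Lieberman's theorem (see also Fujiki) the group $\Aut_\omega(M)$ has only finitely many connected components and its identity component coincides with $\Aut^0(M)$, hence $\Aut_\omega(M)$ — and therefore $K$ — is finite. When $M$ is projective one can bypass Lieberman's theorem: every $g\in K$ fixes $\mathrm{Pic}(M)$, which equals $\mathrm{NS}(M)$ since $H^1(M,\mathcal{O}_M)=H^{0,1}(M)=0$, hence $g$ fixes a chosen very ample line bundle, so $K$ embeds into $\mathrm{PGL}_N$ as a subgroup of the stabilizer of $M\subset\mathbb{P}^N$; this stabilizer is an algebraic group with identity component contained in $\Aut^0(M)=\{1\}$, hence finite, and so is $K$.
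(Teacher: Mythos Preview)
The paper does not supply its own proof of this lemma; it is quoted directly from \cite{Huy99}. Your argument is correct and is essentially the one found there: use that birational maps of hyperK\"ahler manifolds are isomorphisms in codimension one to define $f^*$, observe that any $f$ in the kernel fixes a K\"ahler class and hence (by the K\"ahler-cone argument of Fujiki--Huybrechts) is biregular, and conclude via $H^0(M,T_M)\cong H^0(M,\Omega^1_M)=0$ together with Lieberman--Fujiki that $\Aut_\omega(M)$ is finite. Since the paper in fact assumes its hyperK\"ahler manifolds are projective, your alternative final paragraph (embedding $K$ in the automorphism group of a projective embedding) already suffices in this context.
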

\begin{cor}\label{picard}
Let $M$ be a  hyperK\"ahler manifold of Picard number one. Then $\mathrm{Bir}(M)$ is a finite group.
\end{cor}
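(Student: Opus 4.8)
The plan is to apply Lemma~\ref{kernel}: since the homomorphism $\mathrm{Bir}(M) \to \mathrm{O}(H^2(M,\mathbb{Z}))$, $f^{-1}\mapsto f^*$, has finite kernel, it suffices to prove that its image $\Gamma$ is finite. I would first record that a birational automorphism of a hyperK\"ahler manifold is an isomorphism in codimension one, so it acts on $H^2(M,\mathbb{Z})$ by a Hodge isometry with respect to the Beauville--Bogomolov--Fujiki form $q$; hence every $\gamma\in\Gamma$ preserves the Hodge decomposition, and therefore preserves $\mathrm{NS}(M)=H^{1,1}(M,\mathbb{R})\cap H^2(M,\mathbb{Z})$ as well as its orthogonal complement, the transcendental lattice $T:=\mathrm{NS}(M)^{\perp}\subset H^2(M,\mathbb{Z})$.

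Next I would exploit the hypothesis $\rho(M)=1$. Then $\mathrm{NS}(M)=\mathbb{Z}h$ for a generator $h$, and in the (relevant) projective case we may take $h$ ample, so $q(h)>0$; hence $\mathrm{NS}(M)_{\mathbb{R}}$ is nondegenerate and the only isometries of $\mathbb{Z}h$ are $\pm\mathrm{id}$, giving a homomorphism $\Gamma\to\{\pm\mathrm{id}\}$ by restriction. Since $H^2(M,\mathbb{Q})=\mathrm{NS}(M)_{\mathbb{Q}}\perp T_{\mathbb{Q}}$, each $\gamma$ is determined by the pair $(\gamma|_{\mathrm{NS}(M)},\gamma|_{T})$, so $\Gamma$ embeds into $\{\pm\mathrm{id}\}\times \mathrm{O}_{\mathrm{Hodge}}(T)$, where $\mathrm{O}_{\mathrm{Hodge}}(T)$ is the group of Hodge isometries of $T$. (One could further pin down $\gamma|_{\mathrm{NS}(M)}=\mathrm{id}$ using that $f^{*}$ preserves the positive cone, but this refinement is not needed.) It thus remains to show that $\mathrm{O}_{\mathrm{Hodge}}(T)$ is finite.

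For the last point, observe that $T$ carries a weight-two Hodge structure with $T^{2,0}=\mathbb{C}\sigma$ one-dimensional, $q(\sigma,\sigma)=0$ and $q(\sigma,\bar\sigma)>0$, so $P:=(T^{2,0}\oplus T^{0,2})\cap T_{\mathbb{R}}=\langle\mathrm{Re}\,\sigma,\mathrm{Im}\,\sigma\rangle_{\mathbb{R}}$ is a positive-definite plane; and because $\rho(M)=1$ (with $q(h)>0$) forces $T_{\mathbb{R}}$ to have signature $(2,\mathrm{rk}\,T-2)$, its orthogonal complement $N:=T^{1,1}(M,\mathbb{R})\cap T_{\mathbb{R}}$ is negative definite, with $T_{\mathbb{R}}=P\perp N$. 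Every Hodge isometry preserves both $P$ and $N$, so in a fixed real basis adapted to this decomposition $\mathrm{O}_{\mathrm{Hodge}}(T)$ lies in a fixed conjugate of the compact group $\mathrm{O}(2)\times\mathrm{O}(\mathrm{rk}\,T-2)$ inside $\mathrm{GL}(T_{\mathbb{R}})$, while it is also contained in $\mathrm{GL}(T_{\mathbb{Z}})$, which is discrete in $\mathrm{GL}(T_{\mathbb{R}})$; a discrete subgroup contained in a compact subset is finite. The main obstacle is precisely this transcendental-lattice input: one needs $\mathrm{NS}(M)_{\mathbb{R}}$ to be nondegenerate of signature $(1,0)$ — equivalently, $M$ projective with an ample class of positive $q$-square — so that $N$ comes out negative definite; granting that, the compactness-versus-discreteness argument is immediate, and combined with Lemma~\ref{kernel} it yields that $\mathrm{Bir}(M)$ is finite.
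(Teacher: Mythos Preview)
The paper states this corollary without proof, presenting it as an immediate consequence of Lemma~\ref{kernel} (and implicitly of standard facts about the Hodge isometry action on $H^2$). Your argument is correct and is exactly the standard way one fills in the details: reduce via Lemma~\ref{kernel} to finiteness of the image in $\mathrm{O}(H^2(M,\ZZ))$, decompose $H^2(M,\QQ)=\mathrm{NS}(M)_\QQ\perp T_\QQ$ using that $q(h)>0$ when $\rho(M)=1$ (the paper's standing projectivity hypothesis makes this automatic), and then show $\mathrm{O}_{\mathrm{Hodge}}(T)$ is finite by the compact--discrete intersection argument, since $T_\RR$ has signature $(2,\mathrm{rk}\,T-2)$ and the positive plane is spanned by the real and imaginary parts of the symplectic form.

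One small remark: your step ``each $\gamma$ is determined by $(\gamma|_{\mathrm{NS}(M)},\gamma|_T)$'' is fine over $\QQ$, which is all you need for finiteness; you do not claim the product decomposition is integral, and indeed it need not be. Other than that, nothing to add --- this is precisely the argument the paper is tacitly invoking.
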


Since automorphisms of positive entropy have infinite order, \hkmfds \ of Picard number one do not have automorphisms of positive entropy. In the case of Picard number two, Oguiso found a two dimensional example \cite{Ogu2} and a four dimensional example \cite{Ogu1} of \hkmfds \ with automorphisms of positive entropy.  They are constructed via the study of period maps. The geometry of the two dimensional example can be understood explicitly \cite{FGGL}. On the other hand, there are no explicit higher dimensional examples of \hkmfds \ of Picard number two with automorphisms of positive entropy so far. In general setting, Amerik and Verbitsky proved that the following theorem recently.
\begin{thm}$($\cite{AV16}$)$\label{AV}
Let $M$ be a hyperK\"ahler manifold with $b_2(M)\geq 5$. Then $M$ admits a projective deformation $M^{\prime}$ of Picard number two with symplectic automorphisms of positive entropy. 
\end{thm}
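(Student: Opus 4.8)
The plan is to realize the automorphism as a monodromy operator acting on a carefully chosen period of a deformation of $M$, using surjectivity of the period map together with the Hodge-theoretic global Torelli theorem for \hkmfds. Write $\Lambda = H^2(M,\ZZ)$ with its Beauville--Bogomolov form $q$; it has signature $(3, b_2-3)$, so the hypothesis $b_2 \geq 5$ gives $b_2 - 3 \geq 2$. The first step is lattice-theoretic: choose a primitive sublattice $S \hookrightarrow \Lambda$ of rank two and signature $(1,1)$, together with an isometry $\psi \in \mathrm{O}(S)$ that preserves a fixed component of $\{x \in S\otimes\RR : q(x,x)>0\}$ and has a real eigenvalue $\lambda > 1$. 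Such $S$ and $\psi$ exist; for instance one may take for $S$ the lattice attached to $2k$ times the norm form $u^2 - 2v^2$ of $\ZZ[\sqrt{2}]$, whose orthogonal group is infinite --- governed by the units of $\ZZ[\sqrt{2}]$ --- and contains hyperbolic isometries, and whose nonzero vectors all satisfy $|q| \geq 2k$. Replacing $\psi$ by a power, I would further arrange that $\psi$ acts trivially on the discriminant group $A_S$, so that $\psi \oplus \mathrm{id}$ on $S \oplus S^\perp$ extends to an isometry $\gamma$ of $\Lambda$ that acts trivially on $A_\Lambda$ and, after one further squaring if needed, preserves the relevant orientation; thus $\gamma \in \mathrm{Mon}^2(M)$ (using Markman's identification of $\mathrm{Mon}^2$ with an explicit finite-index subgroup of $\mathrm{O}(\Lambda)$ and Nikulin's gluing calculus, passing to a further power of $\gamma$ if necessary). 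Finally, choosing $k$ large I would ensure that $S$ contains no nonzero class $x$ with $-C \leq q(x,x) \leq 0$, where $C$ bounds $|q|$ on MBM (wall) classes in the deformation class of $M$; then $S$ contains neither isotropic classes nor wall classes. Set $T := S^\perp$, of signature $(2, b_2-4)$ with $b_2 - 4 \geq 1$.

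Using surjectivity of the period map for \hkmfds, I would pick a period $[\sigma]$ in the nonempty subdomain $\{[\sigma] \in \mathbb{P}(T\otimes\CC) : q(\sigma,\sigma)=0,\ q(\sigma,\bar\sigma)>0\}$ that is general enough that $\{x \in \Lambda : q(x,\sigma)=0\} = S$. Fixing a connected component of the moduli space of marked \hkmfds\ deformation equivalent to $M$, there is a marked pair $(M',\eta)$ with $\eta(H^{2,0}(M')) = \CC\sigma$; then $\mathrm{NS}(M') \cong S$ has rank two, so $M'$ has Picard number two, and since $S$ represents positive integers, $M'$ is projective by Huybrechts' projectivity criterion.

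It remains to apply global Torelli. Since $\gamma$ fixes $\sigma$, the isometry $\varphi := \eta^{-1}\gamma\eta$ of $H^2(M',\ZZ)$ is a Hodge isometry acting as the identity on $H^{2,0}(M')$; since $\gamma$ is the identity on $T$ and $\psi$ preserves a positive-cone component of $S$, $\varphi$ preserves the positive cone of $M'$, and because $S$ contains no wall classes the K\"ahler cone of $M'$ equals its positive cone, so $\varphi$ preserves that as well. By the global Torelli theorem for \hkmfds\ (Verbitsky, Markman), a Hodge isometry in $\mathrm{Mon}^2(M')$ preserving the K\"ahler cone is induced by an automorphism; let $f$ be the automorphism of $M'$ with $f^* = \varphi$. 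Then $f$ is symplectic, because $\varphi$ acts trivially on $H^{2,0}(M')$, and $\varphi|_{\mathrm{NS}(M')} = \psi$ has the eigenvalue $\lambda > 1$, so $f^*$ acting on $H^*(M',\RR) \supseteq H^2(M',\RR)$ has spectral radius $\geq \lambda$; by the Gromov--Yomdin theorem $\htop(f)$ equals the logarithm of this spectral radius, whence $\htop(f) \geq \log\lambda > 0$. This gives the required $M'$.

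I expect the crux to be the first step in combination with the K\"ahler-cone verification: one must produce a single rank-two lattice $S$ that simultaneously carries a hyperbolic isometry compatible with the monodromy group --- resting on Markman's precise computation of $\mathrm{Mon}^2$ and on controlling discriminant forms via Nikulin's theory --- and is wall-free, which rests on the (nontrivial) boundedness of MBM classes for a fixed deformation type. By contrast, surjectivity of the period map and the global Torelli theorem enter only as black boxes, and the verifications that $M'$ is projective of Picard number two and that $f$ is symplectic of positive entropy are then routine.
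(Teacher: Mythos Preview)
The paper does not prove this theorem: it is merely quoted from \cite{AV16} as background (both in the introduction and again in Section~2.7), with no argument supplied. So there is no ``paper's own proof'' to compare against.

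That said, your sketch is a faithful outline of the Amerik--Verbitsky strategy: build a rank-two hyperbolic sublattice $S\subset\Lambda$ carrying a hyperbolic isometry, extend it to a monodromy operator fixing $T=S^\perp$, pick a period in $T$ via surjectivity, and invoke global Torelli plus the identification of the K\"ahler cone to realize the isometry by an automorphism. The genuinely delicate inputs you flag---boundedness of MBM classes (so that a large enough $k$ makes $S$ wall-free) and the fact that $\mathrm{Mon}^2$ has finite index in the relevant orthogonal group (so that a power of $\gamma$ lies in it)---are precisely the hard theorems of Amerik--Verbitsky and Verbitsky/Markman that the cited paper develops; you are right to treat them as the crux rather than as routine. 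One caution: you invoke ``Markman's identification of $\mathrm{Mon}^2$ with an explicit finite-index subgroup of $\mathrm{O}(\Lambda)$,'' but that explicit description is only known in the standard deformation types; for an arbitrary \hkmfd\ one has finite index but not the precise image, so the clean route is exactly what you also indicate---pass to a power of $\gamma$ to land in $\mathrm{Mon}^2$. You should also justify that a primitive embedding of your chosen $S$ into $\Lambda$ exists; this follows from Nikulin's embedding criteria once $b_2\geq 5$, but it deserves a sentence. With those two points made explicit, the outline is sound.
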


 The main theorem of this paper gives explicit construction of such \hkmfds \ as moduli spaces of Gieseker stable sheaves on projective K3 surfaces of Picard number one. Moreover, we discuss relations between automorphisms of moduli spaces and autoequivalences of derived categories of projective K3 surfaces. We will give the precise statement in Theorem \ref{mainintro}. In the case of Picard number three, we can easily construct higher dimensional examples of \hkmfds \ with automorphisms of positive entropy. Let $S$ be the Oguiso's K3 surface of Picard number two with an automorphism $f$ of positive entropy. Then the natural automorphism $f^{[n]}: S^{[n]} \to S^{[n]}$ of $f$ has a positive  entropy for any positive integer $n$. It is well known that automorphism groups of projective K3 surfaces of Picard number one are $\ZZ/2\ZZ$ or $1$. The first case occur only for projective K3 surfaces of degree two. So natural automorphisms are not interesting in this case. Moreover, the following holds.
 
 \begin{prop}$($\cite{Ogu1}$)$\label{hilb}
Let $S$ be a projective K3 surface of Picard number one. Let $n>0$ be a positive integer.
Then the birational automorphism group $\mathrm{Bir}(\mathrm{Hilb}^n(S))$ of the Hilbert scheme of points on $S$ is a finite group.
\end{prop}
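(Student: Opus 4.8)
\begin{plan}
When $n=1$ we have $\mathrm{Hilb}^1(S)=S$, which has Picard number one, so $\mathrm{Bir}(S)=\mathrm{Aut}(S)$ is finite by Corollary~\ref{picard}; assume henceforth $n\geq 2$. The plan is to apply Lemma~\ref{kernel}, which reduces the claim to showing that the image $\Gamma$ of the homomorphism $\mathrm{Bir}(\mathrm{Hilb}^n(S))\to\mathrm{O}(H^2(\mathrm{Hilb}^n(S),\ZZ))$, $f^{-1}\mapsto f^{*}$, of Lemma~\ref{kernel} is finite. Every element of $\Gamma$ is a Hodge isometry, hence preserves the sublattice of algebraic classes $\mathrm{NS}(\mathrm{Hilb}^n(S))=\mathrm{NS}(S)\oplus\ZZ\delta$ and its orthogonal complement, the transcendental lattice $T:=T(\mathrm{Hilb}^n(S))=T(S)$; here $2\delta$ is the class of the exceptional divisor $E$ of the Hilbert--Chow morphism $\mathrm{Hilb}^n(S)\to\mathrm{Sym}^n(S)$ and $q(\delta)=-2(n-1)$ for the Beauville--Bogomolov form $q$. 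Since $S$ has Picard number one, $\mathrm{NS}(\mathrm{Hilb}^n(S))$ has rank two and signature $(1,1)$. I would then consider the restriction $r\colon\Gamma\to\mathrm{O}(\mathrm{NS}(\mathrm{Hilb}^n(S)))$ and prove that $\ker r$ and $r(\Gamma)$ are both finite; this suffices, since an element of $\Gamma$ acting trivially on both $\mathrm{NS}(\mathrm{Hilb}^n(S))$ and $T$ acts trivially on the finite-index sublattice $\mathrm{NS}(\mathrm{Hilb}^n(S))\oplus T$, hence on $H^2$.

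An element of $\ker r$ is determined by its restriction to $T$, which is a Hodge isometry of $T=T(S)$, and the group of Hodge isometries of the transcendental lattice of a K3 surface is finite: such an isometry fixes the period line $[\sigma_S]\in\mathbb{P}(T(S)\otimes\CC)$, and the stabilizer of $[\sigma_S]$ in the discrete group $\mathrm{O}(T(S))$ is finite because in $\mathrm{O}(T(S)\otimes\RR)=\mathrm{O}(2,19)$ the stabilizer of the positive-definite real plane $\langle\operatorname{Re}\sigma_S,\operatorname{Im}\sigma_S\rangle$ is compact. Hence $\ker r$ is finite.

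The main point is that $r(\Gamma)$ is finite, and here the geometry of $\mathrm{Hilb}^n(S)$ enters. A birational self-map of a \hkmfd\ is an isomorphism in codimension one (cf.~\cite{Huy99}), so $\mathrm{Bir}(\mathrm{Hilb}^n(S))$, hence $r(\Gamma)$, preserves the pseudoeffective cone $\overline{\mathrm{Eff}}\subset\mathrm{NS}(\mathrm{Hilb}^n(S))_{\RR}\cong\RR^{1,1}$, which is pointed (strongly convex) since $\mathrm{Hilb}^n(S)$ is projective. Suppose $r(\Gamma)$ were infinite. As $\mathrm{O}^{+}(\mathrm{NS}(\mathrm{Hilb}^n(S)))$ has index at most two in $\mathrm{O}(\mathrm{NS}(\mathrm{Hilb}^n(S)))$ and is virtually cyclic (classically, via the automorphs of an indefinite binary quadratic form), $r(\Gamma)\cap\mathrm{O}^{+}(\mathrm{NS}(\mathrm{Hilb}^n(S)))$ would contain an element $\phi$ of infinite order; since elements of $\mathrm{O}^{+}$ swapping the two boundary rays of the positive cone $\C$ have order two, $\phi$ fixes those rays, and after possibly replacing $\phi$ by $\phi^{-1}$ it acts with positive real eigenvalues $\lambda>1>\lambda^{-1}$ on null lines $\RR v_{+},\RR v_{-}$ with $\C=\RR_{>0}v_{+}+\RR_{>0}v_{-}$. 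Now apply this to two pseudoeffective classes. For an ample class $a=x_{+}v_{+}+x_{-}v_{-}$ (so $x_{\pm}>0$) we have $\lambda^{k}\phi^{k}(a)=x_{+}\lambda^{2k}v_{+}+x_{-}v_{-}\to x_{-}v_{-}$ as $k\to-\infty$, whence $v_{-}\in\overline{\mathrm{Eff}}$; and for $[E]=2\delta=a_{+}v_{+}+a_{-}v_{-}$, which has $q([E])=4q(\delta)=-8(n-1)<0$ and hence $a_{+}a_{-}<0$, say $a_{+}>0>a_{-}$, we similarly get $\lambda^{k}\phi^{k}([E])\to a_{-}v_{-}$ as $k\to-\infty$, whence $-v_{-}\in\overline{\mathrm{Eff}}$. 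Then the convex cone $\overline{\mathrm{Eff}}$ contains both $v_{-}$ and $-v_{-}$, hence the line $\RR v_{-}$, contradicting pointedness. So $r(\Gamma)$ is finite, $\Gamma$ is finite, and $\mathrm{Bir}(\mathrm{Hilb}^n(S))$ is finite by Lemma~\ref{kernel}.

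The only non-formal ingredients are the two standard facts invoked above — finiteness of the group of Hodge isometries of $T(S)$, and that a birational self-map of a \hkmfd\ is an isomorphism in codimension one — together with the observation that the Hilbert--Chow exceptional divisor provides an effective class of negative Beauville--Bogomolov square in $\mathrm{NS}(\mathrm{Hilb}^n(S))$. I expect that last point, which is precisely what obstructs the existence of a hyperbolic isometry in $r(\Gamma)$, to be the conceptual heart; the surrounding lattice and hyperbolic-geometry bookkeeping is routine.
\end{plan}
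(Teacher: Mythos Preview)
The paper does not give a proof of this proposition; it is cited from \cite{Ogu1} without argument, so there is nothing in the paper to compare against directly. Your proposal is correct and reconstructs the standard argument: the decisive input is precisely the effective class $E=2\delta$ of negative Beauville--Bogomolov square, which (via the hyperbolic-dynamics step you describe) would force the pseudoeffective cone to contain a line if $r(\Gamma)$ were infinite. In the language of Theorem~\ref{Oguiso}, also taken from \cite{Ogu1}, the Hilbert--Chow morphism supplies a rational boundary ray, ruling out case~(3). One cosmetic point: ``say $a_{+}>0>a_{-}$'' is not a genuine WLOG once $\phi$ and $v_{\pm}$ are fixed, but the roles of $v_{+}$ and $v_{-}$ are symmetric (use $k\to+\infty$ in the other case), so this is harmless.
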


The main idea of this paper is that we use autoequivalences of derived categories of K3 surfaces instead of automorphisms on them. Dimitrov, Haiden, Katzarkov and Kontsevich \cite{DHKK} introduced the notion of the categorical entropy $\hcat(\Phi)$ of an endofunctor $\Phi$ on a triangulated category $\D$.   In the context of the algebraic geometry, this is a generalization of topological entropy in the sense of the following Theorem due to Kikuta and Takahashi \cite{KT16}.
\begin{thm}$($\cite{KT16}$)$
Let $X$ be a smooth projective variety and $f:X\to X$ be a surjetive endomorphism. Then we have 
\[ h_{\mathrm{cat}}(\mathbf{L}f^*)=h_{\mathrm{top}}(f). \]
\end{thm}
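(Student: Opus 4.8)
The plan is to prove $\hcat(\mathbf{L}f^*)\ge\htop(f)$ and $\hcat(\mathbf{L}f^*)\le\htop(f)$ separately, after first recasting $\htop(f)$ cohomologically. Note that a surjective endomorphism of an irreducible projective variety is finite, so $\mathbf{L}f^*$ is a genuine exact endofunctor of $D^b(X)$, $(f^n)^*=(f^*)^n$ on $H^*(X,\CC)$, and --- $f$ being a morphism --- pullback is a ring homomorphism, so $((f^n)^*\alpha)^k=(f^*)^n(\alpha^k)$ for every class $\alpha$. By the Gromov--Yomdin theorem (in the form valid for surjective holomorphic self-maps of compact K\"ahler manifolds) one has $\htop(f)=\max_{0\le p\le\dim X}\log\lambda_p(f)$ with $\lambda_p(f)=\rho(f^*\mid H^{p,p}(X))$ since $f$ is a morphism, and hence $e^{\htop(f)}=\rho(f^*\mid H^*(X,\CC))$, the spectral radius of $f^*$ on total cohomology. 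So it is enough to show $\hcat(\mathbf{L}f^*)=\log\rho(f^*\mid H^*(X,\CC))$.

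For the lower bound I would use the general estimate of \cite{DHKK}: over $D^b$ of a smooth projective variety, $\hcat$ of an exact endofunctor is at least the logarithm of the spectral radius of the linear map it induces on Hochschild homology. Under the Hochschild--Kostant--Rosenberg isomorphism $HH_*(X)\cong\bigoplus_{p,q}H^q(X,\Omega^p_X)$, the map induced by $\mathbf{L}f^*$ is obtained from $f^*$ by pre- and post-composing with multiplication by fixed cohomology classes of constant term $1$; as such multiplications are block upper triangular with identity diagonal for the cohomological grading while $f^*$ preserves the grading, the composite has spectral radius $\max_k\rho(f^*\mid H^k(X,\CC))=\rho(f^*\mid H^*(X,\CC))$. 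This gives $\hcat(\mathbf{L}f^*)\ge\log\rho(f^*\mid H^*(X,\CC))$.

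The upper bound is the heart of the proof. Fix a very ample $\mathcal{O}_X(1)$, write $H=c_1(\mathcal{O}_X(1))$ and $d=\dim X$, and take the split generator $G=\bigoplus_{a=-N}^{N}\mathcal{O}_X(aH)$ for $N$ large; since $\hcat$ is independent of the chosen split generator this is harmless. Because $(\mathbf{L}f^*)^nG=\bigoplus_a (f^n)^*\mathcal{O}_X(aH)$ is a direct sum of line bundles $L^{(n)}_a$ with $c_1(L^{(n)}_a)=a(f^*)^nH$, it suffices to bound $\delta_0(G,L)$ for a line bundle $L$. The key step is to establish: there are fixed classes $\gamma_0,\dots,\gamma_d\in H^*(X,\QQ)$ and a constant $C$ such that $\delta_0(G,L)\le C\sum_{k=0}^{d}\lvert\langle c_1(L)^k\cdot\gamma_k\rangle\rvert$ for every line bundle $L$ that is sufficiently positive or sufficiently negative --- the point being that this bound is \emph{linear} in the intersection numbers. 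I would obtain this from a Beilinson-type resolution of the diagonal: by standard resolution-of-the-diagonal techniques relative to the embedding defined by $\mathcal{O}_X(1)$, the structure sheaf $\mathcal{O}_\Delta$ of the diagonal of $X\times X$ admits a finite complex representative with terms that are finite direct sums of $\mathcal{O}_X(-aH)\boxtimes\mathcal{F}_b$ ($0\le a\le N$, the $\mathcal{F}_b$ a fixed finite family of vector bundles); applying the Fourier--Mukai functor with this kernel to $L$ builds $L$ out of the $\mathcal{O}_X(-aH)$ (all of which lie in $G$) and the $\mathcal{F}_b$ (which lie in $\langle G\rangle$) with multiplicities $\dim H^\bullet(X,L\otimes\mathcal{F}_b)$, and by effective Serre vanishing --- uniform for $L$ ranging over the relevant positive cone --- together with Riemann--Roch, these multiplicities are $\lvert\chi(L\otimes\mathcal{F}_b)\rvert=\lvert\sum_{k}\tfrac1{k!}\langle c_1(L)^k\cdot\gamma_k^{(b)}\rangle\rvert$. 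Feeding in $L=L^{(n)}_a$ and using $c_1(L^{(n)}_a)^k=a^k(f^*)^n(H^k)$, each $\lvert\langle c_1(L^{(n)}_a)^k\cdot\gamma_k\rangle\rvert=\lvert a^k\langle(f^*)^n(H^k)\cdot\gamma_k\rangle\rvert$ is at most $\mathrm{poly}(n)\,\rho(f^*\mid H^{2k})^n\le\mathrm{poly}(n)\,e^{n\htop(f)}$; therefore $\delta_0(G,(\mathbf{L}f^*)^nG)\le\mathrm{poly}(n)\,e^{n\htop(f)}$ and $\hcat(\mathbf{L}f^*)\le\htop(f)$, which finishes the proof.

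The step I expect to be the real obstacle is the key estimate --- and within it, obtaining multiplicities that are \emph{linear}, not merely polynomial, in the intersection numbers: the naive iterated Koszul/Euler resolution of $\mathcal{O}_X(mH)$ by $\mathcal{O}_X,\mathcal{O}_X(-H),\dots$ has complexity exponential in $m$, and a bound polynomial of degree $e>1$ in the single number $c_1(L)\cdot H^{d-1}$ would prove only the far weaker $\hcat(\mathbf{L}f^*)\le d\log\lambda_1(f)$; one genuinely needs the diagonal resolution, whose multiplicities are Euler characteristics and hence detect \emph{all} the dynamical degrees $\lambda_k(f)$ rather than powers of $\lambda_1(f)$ alone. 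Making the restriction of the Beilinson complex to the (possibly singular) diagonal of $X\times X$ precise, and arranging the Serre vanishing uniformly in $n$, are the remaining technical points; the input of \cite{DHKK}, the HKR-compatibility of $\mathbf{L}f^*$ with $f^*$ on cohomology, and the Gromov--Yomdin theorem for endomorphisms are used as black boxes. A conceptually parallel alternative for the upper bound is to bound $\hcat$ of the Fourier--Mukai functor $\mathbf{L}f^*$ directly by the complexity growth of its iterated kernels, namely the structure sheaves of the graphs $\Gamma_{f^n}\subset X\times X$, which is controlled by the volumes $\int_{\Gamma_{f^n}}(\mathrm{pr}_1^*\omega+\mathrm{pr}_2^*\omega)^d$ --- precisely the quantity appearing in Gromov's upper bound for $\htop(f)$ --- but this route relies on the same positivity estimates.
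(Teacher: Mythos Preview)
The paper does not contain a proof of this statement: it is quoted in the introduction as a result of Kikuta and Takahashi \cite{KT16} and used as a black box, with no argument supplied. There is therefore nothing in this paper to compare your attempt against.

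For what it is worth, your outline is broadly in the spirit of the original Kikuta--Takahashi argument. They also reduce, via Gromov--Yomdin, to showing $\hcat(\mathbf{L}f^*)=\log\rho(f^*\mid H^*(X,\CC))$; the lower bound is the DHKK spectral-radius inequality, and for the upper bound they use the Ext-formula $\hcat(\Phi)=\lim_n \tfrac{1}{n}\log\sum_i\dim\mathrm{Ext}^i(G,\Phi^n G)$ valid on $D^b$ of a smooth projective variety, and then control these Ext-dimensions asymptotically. Your diagonal-resolution approach is a geometric repackaging of the same estimate: the multiplicities you obtain from the Beilinson-type kernel are exactly cohomology dimensions of twists of a fixed finite family of sheaves, which is what the Ext-formula computes anyway. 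The delicate point you correctly flag---getting a bound linear in the intersection numbers $\langle (f^*)^n(H^k)\cdot\gamma_k\rangle$ rather than polynomial in $c_1(L)\cdot H^{d-1}$---is precisely where the work lies, and your identification of it as the obstacle is accurate; in \cite{KT16} this is handled by a direct Riemann--Roch estimate rather than by explicitly invoking uniform Serre vanishing, which sidesteps the positivity bookkeeping you anticipate. If you want a self-contained write-up, the Ext-formula route is cleaner than tracking a resolution of the diagonal on a possibly non-toric $X$.
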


The first observation of this paper is the following proposition. 
\begin{prop}\label{positivederived}
Let $S$ be a projective K3 surface. Then there is an autoequivalence $\Phi \in \Aut(D^b(S))$ such that $\hcat(\Phi)>0$.
\end{prop}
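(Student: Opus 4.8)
The plan is to exhibit an explicit autoequivalence as a composition of two spherical twists and to bound its categorical entropy from below by the spectral radius of the induced action on the algebraic Mukai lattice $\algmukai$.

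I would begin by recording the underlying principle: although for a projective K3 surface $S$ of small Picard number the Picard lattice is positive definite and $\Aut(S)$ is correspondingly finite, the algebraic Mukai lattice $\algmukai$ has signature $(2,\rho(S))$ with $\rho(S)\ge 1$, hence is indefinite of rank $\ge 3$, and its isometry group contains elements of infinite order with real eigenvalues off the unit circle. The idea is to realize such an isometry by an element of $\Aut(D^b(S))$ and to convert the resulting exponential growth into categorical entropy via the standard lower bound
\[ \hcat(\Phi)\;\ge\;\log\rho\bigl(\Phi\colon K_{\mathrm{num}}(S)\to K_{\mathrm{num}}(S)\bigr), \]
where $\rho(-)$ denotes spectral radius (cf. \cite{DHKK}, \cite{KT16}); this is meaningful because an autoequivalence preserves the Euler pairing and hence acts on $K_{\mathrm{num}}(S)\cong\algmukai$.

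Next I would make the construction explicit. Fix an ample line bundle $H$ on $S$ and consider the spherical objects $E_1=\mathcal{O}_S$ and $E_2=\mathcal{O}_S(H)$, with Mukai vectors $v_1=(1,0,1)$ and $v_2=(1,H,H^2/2+1)$ in $\algmukai$, and set $\Phi:=T_{E_1}\circ T_{E_2}$, the composition of the associated spherical twists. On $\algmukai$ the functor $\Phi$ acts as the product $s_{v_1}\circ s_{v_2}$ of the reflections $s_{v_i}(x)=x+\langle x,v_i\rangle v_i$. A direct computation with the Mukai pairing gives $\langle v_i,v_i\rangle=-2$ and $\langle v_1,v_2\rangle=-(H^2/2+2)$, so the rank-two sublattice $P=\ZZ v_1\oplus\ZZ v_2$ has Gram matrix $\left(\begin{smallmatrix}-2&a\\a&-2\end{smallmatrix}\right)$ with $a=-(H^2/2+2)$ and $|a|\ge 3$ (since $H^2\ge 2$); thus $\det P=4-a^2<0$, so $P$ is hyperbolic. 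Restricted to $P$, the isometry $s_{v_1}\circ s_{v_2}$ is a product of two reflections in distinct lines of a Lorentzian plane, hence has real eigenvalues $\mu$ and $\mu^{-1}$ with $\mu=\tfrac{1}{2}\bigl(a^2-2+\sqrt{a^2(a^2-4)}\bigr)>1$, while on the orthogonal complement $P^{\perp}\subseteq\algmukai$ both reflections act trivially. Therefore $\rho\bigl(\Phi|_{\algmukai}\bigr)=\mu>1$, and the displayed lower bound yields $\hcat(\Phi)\ge\log\mu>0$.

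The only step that is not entirely elementary is the cohomological lower bound for categorical entropy, which I would simply quote; granting it, the proof reduces to the finite lattice computation above. It seems worth pointing out the parallel with Thurston's construction of pseudo-Anosov mapping classes out of pairs of filling multicurves, with spherical twists in place of Dehn twists: a single twist has zero entropy, while a suitable composition of two does not. If one prefers not to quote the cohomological bound, an alternative is to fix a Bridgeland stability condition $\sigma$ on $S$ and estimate directly the mass growth of $\Phi^{n}(E)$, using that the central charge of a general Mukai vector grows like $\mu^{n}$ under iteration of $\Phi$; this recovers the conclusion at the price of more bookkeeping.
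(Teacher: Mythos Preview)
Your argument is correct and follows the same overall strategy as the paper: take $\Phi$ to be a composition of two spherical twists and invoke the lower bound $\hcat(\Phi)\ge\log\rho(\Phi^H)$ from \cite{DHKK}. The difference lies only in the choice of the second spherical object. The paper twists in $\mathcal{O}_S$ and in a Gieseker-stable sheaf $E$ with Mukai vector $(d+1,dh,d^2-d+1)$ (where $h$ is ample with $h^2=2d$), whose existence is supplied by Yoshioka's non-emptiness theorem, and then computes the action via explicit $3\times3$ matrices on the rank-three sublattice $H^0(S,\ZZ)\oplus\ZZ h\oplus H^4(S,\ZZ)$. Your choice of $\mathcal{O}_S$ and $\mathcal{O}_S(H)$ is more economical: both line bundles are manifestly spherical, the two reflections act nontrivially only on the rank-two hyperbolic plane $P$ they span, and the spectral radius is read off from the trace $a^2-2$ of a $2\times2$ matrix. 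In the minimal case $H^2=2$ (equivalently $d=1$) the two computations give the identical bound $\hcat(\Phi)\ge\log\tfrac{7+3\sqrt{5}}{2}$, since in both cases the Mukai pairing between the two spherical classes equals $-3$; for larger degree the paper's pairing $-(d^2+2)$ grows faster than your $-(d+2)$, but this is irrelevant for the bare positivity statement.
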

Even for a projective K3 surface of Picard number one, we have autoequivalences of positive entropy. We hope that some autoequivalences of positive categorical entropy as in Proposition \ref{positivederived} can be  understood as symmetry of moduli spaces of Bridgeland stable objects. Bayer and Macri \cite{BM12}, \cite{BM13} proved that moduli spaces of Bridgeland stable objects in derived categories of projective K3 surfaces are projective \hkmfds \ of $\text{K3}^{[n]}$-type.  By Bayer and Macri's work, moduli spaces of Gieseker stable sheaves on projective K3 surfaces can be described as moduli spaces of stable objects with respect to some Bridgeland stability condition. Since autoequivalences change stability conditions, autoequivalences do not induce automorphisms of moduli spaces of stable objects in general. So we study variation of stability conditions, namely wall and chamber structures on the space of stability conditions. For a given Mukai vector $v$, there is the wall and chamber structure in the space of stability conditions. Bayer and Macri proved that there are three types of walls, flopping walls, divisorial walls and fake walls. Before giving the main theorem, we fix some notation. Let $S$ be a projective K3 surface. Let $H^*(S,\ZZ)$ be the Mukai lattice and $\Halg$ be the $(1,1)$-part of the Mukai lattice. We denote the distinguished connected component of the space of stability conditions on $D^b(S)$ by $\stab(S)$.  For a primitive vector $v \in \Halg$ and a $v$-generic stability condition $\sigma \in \stab(S)$, the moduli space $M_\sigma(v)$ of $\sigma$-stable objects with Mukai vector $v$ is a projective \hkmfd \ of $\text{K3}^{[n]}$-type and $2n=v^2+2$ \cite{BM12}, \cite{BM13}.  The main results of this paper are as follows.

\begin{thm}\label{mainintro}$($\rm{Theorem \ref{converse} Corollary \ref{maincor}}$)$\it{}
Let $S$ be a projective K3 surface of Picard number one. Let $v \in \Halg$ be a primitive vector with $v^2 \geq2$. Assume that all walls with respect to $v$ are fake walls. For any $v$-generic stability condition $\sigma \in \stab(S)$, there is an autoequivalence $\Phi \in \Aut(D^b(S))$ such that the morphism
\[ \phi : M_\sigma(v) \to M_\sigma(v), \ [E] \mapsto [\Phi(E)] \]
is an automorphism of $M_\sigma(v)$ of positive entropy $\htop(\phi)>0$. Moreover, we have an inequality
\[ \frac{1}{2}\dim M_\sigma(v) \cdot \hcat(\Phi) \geq \htop(\phi).\]
\end{thm}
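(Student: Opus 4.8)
\begin{plan}
The strategy is to reduce the statement to the construction of a single autoequivalence $\Phi\in\Aut(D^b(S))$ whose cohomological realization $\Phi^H$ on the Mukai lattice $\widetilde H(S,\ZZ)$ fixes $v$ and satisfies $\rho(\Phi^H|_{\Halg})>1$, where $\rho$ denotes spectral radius. Granting such a $\Phi$: it preserves $\stab(S)$, and since $\Phi^H(v)=v$ it permutes the walls for $v$, so $\Phi_*\sigma\in\stab(S)$ is again $v$-generic and $\Phi$ induces an isomorphism $M_\sigma(v)\to M_{\Phi_*\sigma}(v)$, $[E]\mapsto[\Phi(E)]$. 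Because every wall for $v$ is a fake wall, the moduli space is constant among $v$-generic stability conditions in $\stab(S)$ (Bayer--Macri \cite{BM13}), giving a canonical identification $M_{\Phi_*\sigma}(v)=M_\sigma(v)$ that is the identity on $H^2$ under the Mukai isometry $\theta_v\colon v^{\perp}\to H^2(M_\sigma(v),\ZZ)$. Composing yields the desired automorphism $\phi$ of $M_\sigma(v)$, and by functoriality of $\theta_v$ the map $\phi^*$ acts on $H^2(M_\sigma(v),\ZZ)\cong v^{\perp}$ as $\Phi^H|_{v^{\perp}}$.

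To build $\Phi$, I would use the derived global Torelli theorem, which realizes $\Aut(D^b(S))$ as a central extension of the group $\OHH$ of orientation-preserving Hodge isometries of $\widetilde H(S,\ZZ)$: it is enough to produce $g\in\OHH$ with $g(v)=v$ and $\rho(g|_{\Halg})>1$, and lift it. A Hodge isometry of the transcendental lattice $\Halg^{\perp}$ has finite order, so after passing to a power we may assume $g$ acts as the identity there; then $g$ amounts to a lattice isometry $g_1$ of $\Halg$ (rank three, signature $(2,1)$) fixing $v$ and acting trivially on the discriminant group of $\Halg$. Since $v^2\geq 2$ we have $\Halg\otimes\QQ=\QQ v\perp(v^{\perp}\otimes\QQ)$, so the problem becomes finding an isometry $g_0$ of the rank-two lattice $v^{\perp}\cap\Halg$, of signature $(1,1)$, lying in a suitable finite-index subgroup (to be compatible with the gluings $\ZZ v\oplus(v^{\perp}\cap\Halg)\subseteq\Halg\subseteq\widetilde H(S,\ZZ)$ and with the orientation condition) and with $\rho(g_0)>1$. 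This is the crux and, I expect, the main obstacle: the orthogonal group of an indefinite rank-two lattice anisotropic over $\QQ$ is infinite (Dirichlet/Pell) and all of its infinite-order elements are hyperbolic with an eigenvalue of modulus $>1$, so a high enough power works; but one must check that $v^{\perp}\cap\Halg$ really is anisotropic and that the finite-index constraints are simultaneously satisfiable, and it is here that the hypothesis that all walls for $v$ are fake, together with $\rk\Halg=3$ for a Picard number one surface, is used to control the arithmetic of this lattice.

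For the dynamical estimates, put $n=\tfrac12\dim M_\sigma(v)$, so $2n=v^2+2$. Since $\Phi^H$ fixes $v$ and $\Halg\otimes\QQ=\QQ v\perp(v^{\perp}\otimes\QQ)$, we get $\rho(\phi^*|_{H^2})=\rho(\Phi^H|_{v^{\perp}})=\rho(\Phi^H|_{\Halg})>1$; hence $\phi$ has infinite order, and by the Gromov--Yomdin theorem $\htop(\phi)\geq\log\rho(\phi^*|_{H^2(M_\sigma(v))})>0$ (as in the K3 case \cite{C},\cite{Mc}), and since $\Phi^H$ acts trivially on $H^{2,0}$ the automorphism $\phi$ is moreover symplectic, in the spirit of Theorem \ref{AV}. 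For the inequality, one uses the structure of the cohomology of \hkmfds \ of $\text{K3}^{[n]}$-type together with log-concavity of dynamical degrees to obtain $\rho(\phi^*|_{H^k(M_\sigma(v),\RR)})\leq\rho(\phi^*|_{H^2(M_\sigma(v),\RR)})^{n}$ for all $k$, whence Gromov--Yomdin gives $\htop(\phi)\leq n\log\rho(\phi^*|_{H^2})$. Combining this with $\log\rho(\phi^*|_{H^2})=\log\rho(\Phi^H|_{\Halg})\leq\log\rho(\Phi^H)$ on $\widetilde H(S,\ZZ)$ and with the Yomdin-type lower bound $\hcat(\Phi)\geq\log\rho(\Phi^H)$ on the numerical Grothendieck group \cite{DHKK},\cite{KT16}, we conclude $\htop(\phi)\leq n\,\hcat(\Phi)=\tfrac12\dim M_\sigma(v)\cdot\hcat(\Phi)$.
\end{plan}
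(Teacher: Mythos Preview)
Your plan is essentially correct and uses the same ingredients as the paper (Nikulin extension, derived Torelli, fake-wall crossing, and the Yomdin-type bound $\hcat(\Phi)\ge\log\rho(\Phi^H)$), but the order of the argument is reversed. The paper does \emph{not} construct the hyperbolic isometry of $v^{\perp}\cap\Halg$ by hand. Instead it first observes, via Theorem~\ref{nefcone}, that the hypothesis forces $\overline{\mathrm{Amp}}(M_\sigma(v))=\overline{\mathrm{Pos}}(M_\sigma(v))$ with irrational boundary rays, and then invokes Oguiso's Theorem~\ref{Oguiso} to produce an automorphism $\psi$ of $M_\sigma(v)$ with $\htop(\psi)>0$ directly on the moduli space. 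Only \emph{afterwards} does it lift a power of $\psi^{*}$ through $\theta_v$ to a Hodge isometry of $H^{*}(S,\ZZ)$ fixing $v$ (Proposition~\ref{infiniteorder}, which packages exactly your Nikulin/derived-Torelli step), and then redescends via the fake-wall crossing of Theorem~\ref{converse}. This bypasses the Pell/Dirichlet analysis and the finite-index bookkeeping you flag as the crux: Oguiso's theorem already contains that lattice argument, and the anisotropy of $v^{\perp}\cap\Halg$ is exactly the irrationality of the boundary of $\overline{\mathrm{Pos}}$. In effect the paper goes geometry $\to$ derived $\to$ geometry, while you go derived $\to$ geometry; the content is the same.

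For the entropy inequality the paper is more direct: it uses Oguiso's equality $\htop(\phi)=\tfrac12\dim M_\sigma(v)\cdot\log\rho(\phi^{*}|_{H^2})$ for \hkmfds\ (Theorem~\ref{firstdeg}) rather than log-concavity of dynamical degrees. Since $\phi^{*}|_{H^2}$ is conjugate via $\theta_v$ to $\Phi^H|_{v^{\perp}}$ and $\Phi^H$ fixes $v$, one gets $\log\rho(\phi^{*}|_{H^2})=\log\rho(\Phi^H)\le\hcat(\Phi)$ immediately. Your route via Gromov--Yomdin and log-concavity is valid but unnecessary here. One small imprecision: the identification $M_{\Phi_{*}\sigma}(v)\cong M_\sigma(v)$ across fake walls is not canonical---it is realized by an autoequivalence $\Phi_\gamma\in\Aut^0(D^b(S))$ depending on the path---but you are right that it is the identity on $H^2$ via $\theta_v$, which is all that is needed.
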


If Conjecture \ref{KTconj} (Conjecture 5.3 in \cite{KT16}) is true, then the following holds.
\begin{conj}In the setting in Theorem \ref{mainintro}, we have the equality
 \[ \frac{1}{2}\dim M_\sigma(v) \cdot \hcat(\Phi) = \htop(\phi).\]
\end{conj}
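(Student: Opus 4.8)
The plan is to observe that, once Conjecture \ref{KTconj} is granted, the proof of Theorem \ref{mainintro} already supplies every equality we need, and the single estimate coming from the general Dimitrov--Haiden--Katzarkov--Kontsevich lower bound is the only inequality that has to be upgraded. Write $n=\tfrac12\dim M_\sigma(v)$, so that $v^2=2n-2$, and let $\lambda$ denote the spectral radius of the isometry $\Phi_*\colon\Halg\to\Halg$ that $\Phi$ induces on the algebraic Mukai lattice via its cohomological Fourier--Mukai action. The content of the plan is to combine two facts: the \emph{exact} identity $\htop(\phi)=n\log\lambda$, which is unconditional, with the equality $\hcat(\Phi)=\log\lambda$, which is exactly what Conjecture \ref{KTconj} provides for $\Phi$.

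First I would recall, from the analysis underlying Theorem \ref{mainintro}, why $\htop(\phi)=n\log\lambda$ holds as an equality and not merely as an inequality. Since $\phi$ is an automorphism of the compact K\"ahler manifold $M_\sigma(v)$, the Gromov--Yomdin theorem gives $\htop(\phi)=\log\rho\bigl(\phi^*\mid H^*(M_\sigma(v),\RR)\bigr)$. For a hyperK\"ahler manifold of $\mathrm{K3}^{[n]}$-type Verbitsky's injectivity of $\mathrm{Sym}^k H^2\to H^{2k}$ for $k\le n$, together with log-concavity of the dynamical degrees, forces $d_k(\phi)=d_1(\phi)^k$ for $0\le k\le n$, with the maximum attained in middle degree; concretely, if $\alpha\in H^{1,1}$ is the leading eigenvector with $\phi^*\alpha=\lambda\alpha$, then $\alpha^n\neq0$ is an eigenvector of eigenvalue $\lambda^n$ in $H^{n,n}$. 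Hence $\rho(\phi^*\mid H^*)=\rho(\phi^*\mid H^2)^{\,n}$ and $\htop(\phi)=n\log\rho(\phi^*\mid H^2)$. By the Bayer--Macr\`i description of $H^2$ there is a Hodge isometry $v^\perp\xrightarrow{\sim}H^2(M_\sigma(v),\ZZ)$ intertwining the action of $\Phi_*$ on $v^\perp$ inside the Mukai lattice with $\phi^*$. Because $\phi$ is a genuine automorphism of $M_\sigma(v)$ and all walls for $v$ are fake, $\Phi_*$ fixes $v$, hence preserves the splitting $\ZZ v\oplus v^\perp$ and acts on $\ZZ v$ with modulus one; and since $S$ has Picard number one, the action of $\Phi_*$ on the transcendental part of $v^\perp$ preserves the weight-two Hodge structure and so is of finite order. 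Therefore the spectral radius of $\phi^*$ on $H^2$ is attained on the rank-two algebraic piece $v^\perp\cap\Halg$ and equals $\rho(\Phi_*\mid\Halg)=\lambda$, giving $\htop(\phi)=n\log\lambda$.

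It then remains to evaluate $\hcat(\Phi)$ under Conjecture \ref{KTconj}. The Mukai vector identifies $K_{\mathrm{num}}(D^b(S))$ with $\Halg$ and the induced endomorphism $[\Phi]$ with $\Phi_*$, so Conjecture \ref{KTconj} yields
\[ \hcat(\Phi)=\log\rho\bigl([\Phi]\mid K_{\mathrm{num}}(D^b(S))\bigr)=\log\lambda. \]
Substituting into the previous identity gives $\tfrac12\dim M_\sigma(v)\cdot\hcat(\Phi)=n\log\lambda=\htop(\phi)$, which is the desired equality. The argument is of course conditional on Conjecture \ref{KTconj}; its entire unconditional substance is the identity $\htop(\phi)=n\log\lambda$, and the step I expect to demand the most care is guaranteeing that the leading eigenvalue $\lambda$ survives every passage between lattices---namely verifying $\Phi_* v=v$, so that no eigenvalue of modulus $\lambda$ is hidden in the $\ZZ v$ direction, and that the transcendental eigenvalues remain of modulus one. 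Both of these rest squarely on the Picard-number-one hypothesis and on the assumption that all walls with respect to $v$ are fake.
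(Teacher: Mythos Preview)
Your argument is correct and matches the paper's implicit reasoning: the statement is presented in the paper as a conjecture with no proof beyond the remark that it follows from Conjecture~\ref{KTconj}, and the only inequality in the proof of Corollary~\ref{maincor} is indeed the Dimitrov--Haiden--Katzarkov--Kontsevich lower bound $\hcat(\Phi)\ge\log\rho(\Phi^H)$, which Conjecture~\ref{KTconj} upgrades to an equality once one observes that $\rho(\Phi^H)=\rho(\Phi^H|_{\Halg})$ because the Hodge-isometric action on the transcendental lattice has finite order. One small simplification: your detour through Verbitsky's $\mathrm{Sym}^k H^2\hookrightarrow H^{2k}$ and log-concavity of dynamical degrees is unnecessary here, since the identity $\htop(\phi)=\tfrac12\dim M_\sigma(v)\cdot\log\rho(\phi^*|_{H^2})$ is already recorded in the paper as Theorem~\ref{firstdeg}; also, the fact that $\Phi^H(v)=v$ is part of the construction of $\Phi$ in Corollary~\ref{maincor} rather than a consequence of the fake-wall hypothesis.
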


We can construct examples of Mukai vectors having only fake walls.
\begin{ex}\label{mainexintro}
Let $S$ be a K3 surface with $\mathrm{NS}(S)=\ZZ h$. Assume one of the followings.

\begin{itemize}
\item[\rm{(1)}]\it{}$h^2=132, v=(4,h,16), v^2=4$
\item[\rm{(2)}]\it{}$h^2=510, v=(6,h,42), v^2=6$
\item[\rm{(3)}]\it{}$h^2=1160, v=(8,h,72), v^2=8$
\item[\rm{(4)}]\it{}$h^2=2210, v=(10,h,110), v^2=10$
\end{itemize}
Then there are only fake walls with respect to $v$.
\end{ex}

We will construct four dimensional examples of Theorem \ref{mainintro} from cubic fourfolds.

\subsection{From cubic fourfolds}
The geometry of cubic fourfolds give examples of \hkmfds. Beauville and Donagi \cite{BD} proved that the Fano scheme $F(X)$ of lines on a cubic fourfold $X$ is a hyperK\"ahler fourfold of $\text{K3}^{[2]}$-type. CF.Lehn, M. Lehn, CH. Sorger and  D. Van straten \cite{LLSS} constructed the  hyperK\"ahler eightfold $Z(X)$ of $\text{K3}^{[4]}$-type from twisted cubic curves on a cubic fourfold $X$ not containing a plane.  Conjecturely, these hyperK\"ahler manifolds can be described as moduli spaces of stable objects in some K3 categories constructed from cubic fourfolds. We recall the construction of the K3 category. Let $X$ be a cubic fourfold. There is a semi-orthogonal decomposition 
 \[ D^b(X)=\langle \A_X, \mathcal{O}_X, \mathcal{O}_X(1), \mathcal{O}_X(2) \rangle. \]
Kuznetsov \cite{Kuz03}, \cite{Kuz10} proved that $\A_X$ is a K3 category i.e. $[2]$ is a Serre functor. Let $\pr : D^b(X) \to \A_X$ be the left adjoint of the inclusion functor. We expect that $F(X)$ is a moduli space of stable objects in $\A_X$ with the numerical class $[\pr(\mathcal{O}_{\mathrm{line}}(1))]$ \cite{KM}, \cite{MS}. Similarly, the hyperK\"ahler eightfold $Z(X)$ is constructed as a moduli space of stable objects in $\A_X$ with the numerical class  $[\pr(\mathcal{O}_{\mathrm{point}}(1))]$ \cite{AT}, \cite{AL}, \cite{O} conjecturelly. In this paper, we treat only Fano schemes of lines on cubic fourfolds. Sometimes, the K3 category $\A_X$ is equivalent to the derived category of some K3 surface. Kuznetsov proposed the following conjecture. 

   \begin{conj}$($\cite{Kuz10}$)$\label{Kuzconj}
   A cubic fourfold $X$ is rational if and only if there is a K3 surface $S$ such that $\mathcal{A}_X$ is equivalent to $D^b(S)$.
   \end{conj}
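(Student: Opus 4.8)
The plan is to reduce the conjecture to the Hodge-theoretic problem of associated K3 surfaces and then to treat the two implications separately, with the realistic expectation that only one direction is presently within reach. First I would replace the categorical condition ``$\A_X \simeq D^b(S)$ for some K3 surface $S$'' by a condition on the weight-two Hodge structure attached to $\A_X$ (carried, for instance, by $\KA$ together with its topological refinement). By the theorem of Addington and Thomas, the existence of such an equivalence is equivalent to a period condition on $H^4(X,\ZZ)$ singled out by Hassett: namely that $X$ lie in a special divisor $\mathcal{C}_d$ in the moduli space of cubic fourfolds with $d$ \emph{admissible}, i.e. $d>6$, $d\equiv 0,2 \pmod 6$, and $d$ divisible by neither $4$, nor $9$, nor any prime $p\equiv 2\pmod 3$. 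Equivalently, the nontrivial (transcendental) part of the Hodge structure of $X$ is Hodge-isometric, up to sign and twist, to the transcendental lattice of a K3 surface inside its Mukai lattice $H^*(S,\ZZ)$. This turns the right-hand side of the conjecture into a computable, lattice-theoretic membership condition, and is the form in which the statement is usually attacked.

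For the implication ``categorical K3 $\Rightarrow$ rational'' I would try to produce explicit rationality constructions for the cubics in the admissible divisors, proceeding one value of $d$ at a time. The model case is $d=14$: a general cubic fourfold in $\mathcal{C}_{14}$ is Pfaffian, and the classical Fano--Morin--Beauville--Donagi argument realizes it as rational through a birational map built from the associated degree-$14$ K3 surface and the Fano scheme of lines $F(X)$. For general admissible $d$ the strategy is to locate a suitable family of rational surfaces, or a congruence of secant (resp. multisecant) lines on $X$, whose incidence variety gives a birational parametrization of $X$ by projective space; this is the mechanism behind the recently established rationality of the general member of $\mathcal{C}_{26}$, $\mathcal{C}_{38}$ and $\mathcal{C}_{42}$. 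Because each admissible $d$ may demand its own geometric construction, finishing this direction amounts to an infinite family of increasingly delicate explicit birational-geometry problems, and at present it is carried out only for finitely many $d$.

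The reverse implication ``rational $\Rightarrow$ categorical K3'', equivalently ``no associated K3 $\Rightarrow$ irrational'', is where I expect the decisive obstacle to lie, and it appears genuinely out of reach with current technology. A proof would require an irrationality criterion sharp enough to detect the \emph{absence} of a K3 category; the natural candidates are the integral Hodge structure on $H^4(X,\ZZ)$ together with $\A_X$ and its Chow-theoretic and intermediate-Jacobian invariants. The trouble is that cubic fourfolds resemble cubic threefolds only superficially: in even dimension the Clemens--Griffiths intermediate-Jacobian obstruction degenerates, and none of the standard stable-birational invariants (unramified cohomology, decomposition of the diagonal, and so on) is presently known to obstruct rationality for even a single cubic fourfold --- indeed no cubic fourfold has ever been proven irrational. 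Consequently a complete proof of this direction would need a fundamentally new irrationality obstruction extracted from the derived category $\A_X$, and I would regard constructing such an obstruction, rather than any of the lattice bookkeeping or wall-crossing analysis, as the true content of the conjecture.
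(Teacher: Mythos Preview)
The statement you were asked to prove is a \emph{conjecture}, not a theorem: the paper states Kuznetsov's rationality conjecture as background and motivation but makes no attempt whatsoever to prove it, nor does it claim to. There is therefore no ``paper's own proof'' to compare your proposal against.

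Your write-up is not really a proof attempt either --- and, to your credit, you recognize this. What you have produced is an accurate survey of the state of the art: the Addington--Thomas translation of the categorical condition into Hassett's admissibility condition on $d$, the case-by-case rationality constructions for small admissible $d$ (Pfaffian for $d=14$, and the more recent work on $d=26,38,42$), and a correct assessment that the ``irrationality'' direction is completely open because no cubic fourfold has ever been shown to be irrational and the classical obstructions (Clemens--Griffiths, unramified cohomology, decomposition of the diagonal) all appear to vanish or to be undetectable here. This is a sensible and well-informed discussion, but it is a research programme outline, not a proof; the conjecture remains open, and the paper under review uses it only as context for its actual results on automorphisms of $F(X)$.
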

   Note that there are no known irrational cubic fourfolds so far. We recall works by Hassett \cite{Has00}, Addington and Thomas \cite{AT}, Galkin and Shinder \cite{GS} and Addington \cite{Ad}. Let $\mathcal{C}$ be the moduli space of cubic fourfolds. Let $X$ be a  cubic fourfold. We say that $X$ is special if $\mathrm{rk}H^{2,2}(X,\mathbb{Z}) \geq2$ holds.  Otherwise, $X$ is called very general. More specifically, if $X$ has a rank $2$ primitive sublattice $K \subset H^{2,2}(X,\mathbb{Z})$ of discriminant $d$ such that $H^2 \in K$, then $X$ is called a special cubic fourfold of discriminant $d$.  Here, $H \in H^2(X,\mathbb{Z})$ is the hyperplane class.  Let $\mathcal{C}_d$ be the (possibly empty) codimension one subvariety of special cubic fourfolds of discriminant $d$.   Hassett proved that $\mathcal{C}_d$ is not empty if and only if 
 \begin{center}($*$): $d>6$ and $d \equiv 0$ or $2$ (mod $6$).\end{center} 
 Moreover, Hassett \cite{Has00} proved that  $X$ has an associated K3 surface at the level of Hodge theory if and only if $d$ satisfies ($*$) and 
  \begin{center}($**$): $d$ is not divisible by $4$, $9$ or any odd prime $p\equiv2$ (mod $3$). \end{center}  
  
  Addington and Thomas \cite{AT} proved the following theorem from the viewpoint of derived categories.
 \begin{thm}$($\cite{AT}$)$
   Let $X$ be a cubic fourfold. 
   \begin{itemize}
   \item If there is a K3 surface $S$ such that $\mathcal{A}_X \simeq D^b(S)$, then we have $X \in \mathcal{C}_d$ for some integer $d$ satisfying $(*)$ and $(**)$.
   \item Let $d$ be an integer satisfying $(*)$ and $(**)$. Then the set 
   \[U_d:=\{X \in \mathcal{C}_d \mid \mathcal{A}_X \simeq D^b(S) \} \subset \mathcal{C}_d \]
   is a non-empty Zariski open subset.
   \end{itemize}
   \end{thm}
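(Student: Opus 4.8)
The plan is to replace the geometry of $X$ by the lattice-with-Hodge-structure attached to the Kuznetsov component $\A_X$. From the semiorthogonal decomposition $D^b(X)=\langle \A_X,\mathcal O_X,\mathcal O_X(1),\mathcal O_X(2)\rangle$ together with additivity of topological K-theory one extracts a lattice $\widetilde H(\A_X,\ZZ)$ carrying a weight-two Hodge structure with $h^{2,0}=1$. The first step is to check that it is abstractly isometric to the extended K3 lattice $\widetilde\Lambda=U^{\oplus 4}\oplus E_8(-1)^{\oplus 2}$, that it contains a canonical positive-definite rank-two sublattice $A_2=\langle\lambda_1,\lambda_2\rangle$ (isometric to the root lattice $A_2$) inside its $(1,1)$-part --- generated by the classes $\pr(\mathcal O_{\mathrm{line}}(i))$ --- and that the orthogonal complement $A_2^{\perp}$ is Hodge-isometric to a Tate twist of $H^4_{\mathrm{prim}}(X,\ZZ)$. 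I would also record that any equivalence $\A_X\simeq D^b(S)$ is of Fourier--Mukai type (Orlov), hence induces a Hodge isometry $\widetilde H(\A_X,\ZZ)\cong\widetilde H(S,\ZZ)$.

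For the first bullet: the Mukai lattice $\widetilde H(S,\ZZ)$ contains the hyperbolic plane $U=\langle(1,0,0),(0,0,1)\rangle$ primitively inside its $(1,1)$-part, so transporting $U$ through the Hodge isometry shows $\widetilde H^{1,1}(\A_X,\ZZ)$ contains a primitive copy of $U$. Since $A_2$ is positive definite it cannot be contained in the hyperbolic plane $U$, hence $\rk\,\widetilde H^{1,1}(\A_X,\ZZ)\geq 3$, whence $\rk\,\Hodge\geq 2$ and $X$ is special, say $X\in\C_d$; Hassett's non-emptiness criterion then forces $(*)$. For $(**)$ I would run a discriminant-form computation: for $X\in\C_d$ the saturation of $A_2$ together with the extra algebraic class is a rank-three lattice $K_d$ depending only on $d$ (with two shapes according to $d\bmod 6$), and Nikulin's theory of discriminant forms pins down exactly when $K_d$ admits a primitive embedding of $U$ --- the answer being precisely that $d$ is prime to $4$, to $9$ and to every prime $p\equiv 2\pmod 3$, i.e.\ condition $(**)$.

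For the second bullet I would first note that once $d$ satisfies $(*)$ and $(**)$ the lattice condition holds for \emph{every} $X\in\C_d$ (extra algebraic classes only enlarge $\widetilde H^{1,1}$, and the rank-three piece $K_d$ already contains a primitive $U$), so $\widetilde H(\A_X,\ZZ)$ admits an abstract Hodge isometry with $\widetilde H(S,\ZZ)$ for a K3 surface $S$ produced by surjectivity of the period map, with $\mathrm{Pic}(S)=\ZZ$ for general $X$. The real content is to upgrade this Hodge isometry to an equivalence $\A_X\simeq D^b(S)$, and this I would not attempt uniformly over $\C_d$. Instead I would (i) establish it at a single point of $\C_d$ --- for $d=14$ the Pfaffian cubics, where $F(X)\cong S^{[2]}$ and the equivalence is classical (Kuznetsov), and for general $d$ by transporting a point through the relevant period maps so that $F(X)$ is realized as a moduli space $M_v(S)$ of sheaves on a K3 surface with $v^2=2$ and $v$ primitive (global Torelli for hyperk\"ahler fourfolds of $\mathrm{K3}^{[2]}$-type), the universal sheaf then composing with the universal object of $\A_X$ over $F(X)$ to furnish the Fourier--Mukai kernel, $(**)$ being exactly what makes the a priori Brauer obstruction to a universal family vanish; and (ii) show that the locus $U_d\subset\C_d$ on which such an equivalence exists is Zariski-open, by a deformation argument for the Fourier--Mukai kernel (the locus on which the deformed kernel fails to be an equivalence is closed). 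Combining (i), (ii) and the irreducibility of $\C_d$ gives that $U_d$ is non-empty and Zariski-open.

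The main obstacle is exactly the combination of (i) and (ii): there is no a priori global derived Torelli theorem for Kuznetsov components, so the equivalence must be bootstrapped from K3 and hyperk\"ahler Torelli through the geometry of $F(X)$ and then propagated by deformation theory of kernels --- which is precisely why the conclusion is only for a Zariski-open, rather than all, of $\C_d$. Keeping track of the Brauer twist, so as to see that it is condition $(**)$ and not merely $(*)$ that yields an \emph{untwisted} K3 surface, is the delicate lattice-theoretic point that links the two bullets.
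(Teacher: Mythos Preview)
The paper does not supply its own proof of this theorem; it is quoted from \cite{AT} as background, so there is no in-paper argument to compare your proposal against. Your outline does, in broad strokes, follow the Addington--Thomas strategy: endow $\A_X$ with a Mukai lattice containing the $A_2$-lattice $\langle\lambda_1,\lambda_2\rangle$, identify $A_2^\perp$ with (a twist of) primitive middle cohomology, reduce the first bullet to the lattice-theoretic criterion that $\widetilde H^{1,1}(\A_X,\ZZ)$ contain a primitive hyperbolic plane, and handle the second bullet by producing the equivalence at one point of $\C_d$ and then propagating it by deforming the Fourier--Mukai kernel.

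Where you diverge from \cite{AT} is your step (i) for general $d$. You propose to manufacture the kernel directly from an identification $F(X)\simeq M_v(S)$ combined with the universal projected object of $\A_X$ over $F(X)$. That is not the route taken in \cite{AT}: there one specialises instead to the intersection $\C_d\cap\C_8$ (cubics containing a plane), invokes Kuznetsov's explicit equivalence $\A_X\simeq D^b(S',\alpha)$ with a twisted K3 at such a point, checks that the twist $\alpha$ can be made trivial precisely under $(**)$, and then deforms the kernel out into $\C_d$. Your alternative is plausible in spirit, but as written it contains a genuine gap: the composite of the universal family on $M_v(S)$ with the family $\{\pr(\mathcal O_\ell(1))\}_{\ell\in F(X)}$ gives a functor $D^b(S)\to\A_X$, yet showing it is fully faithful (hence an equivalence, both sides being indecomposable CY2 categories) is a substantive statement that does not follow formally from $F(X)\simeq M_v(S)$ and would itself amount to the derived-Torelli-type input you are trying to avoid. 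If you intend this as a self-contained argument rather than a heuristic, that step needs to be filled in or replaced by the $\C_8$-degeneration used in \cite{AT}.
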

   
   By Galkin and Shinder's work \cite{GS}, we have the following conjecture. 
   
\begin{conj}\label{GSconj}
If $X$ is a rational cubic fourfold, then $F(X)$ is birational to the Hilbert scheme of two points on some K3 surface.
\end{conj}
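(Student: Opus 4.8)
The plan is to deduce Conjecture~\ref{GSconj} from Conjecture~\ref{Kuzconj} together with the (so far only expected) description of $F(X)$ as a moduli space of Bridgeland stable objects in the Kuznetsov component $\A_X$; as in Theorem~\ref{mainintro}, the argument must route everything through derived categories of K3 surfaces and the Bayer--Macri theory of wall-crossing. So suppose $X$ is rational. By Conjecture~\ref{Kuzconj} there is a K3 surface $S$ and an exact equivalence $\A_X \simeq D^b(S)$; equivalently, by Addington--Thomas~\cite{AT}, $X \in U_d$ for some $d$ satisfying $(*)$ and $(**)$, and $S$ carries a polarization of degree $d$.

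Next I would use the moduli description from~\cite{KM},~\cite{MS}: $F(X) = M_{\sigma_0}(\lambda)$, the moduli space of $\sigma_0$-stable objects in $\A_X$ of class $\lambda = [\pr(\mathcal{O}_{\mathrm{line}}(1))]$ for a suitable $\lambda$-generic stability condition $\sigma_0$ on $\A_X$. The class $\lambda$ is primitive in $\KA$, and since $\dim F(X) = 4$ the Mukai pairing on $\KA$ forces $\lambda^2 = 2$. Transporting along the equivalence $\A_X \simeq D^b(S)$ turns the Mukai pairing on $\KA$ into the one on $\Halg$, sends $\lambda$ to a primitive vector $v \in \Halg$ with $v^2 = 2$, and sends $\sigma_0$ to a point $\sigma \in \stab(S)$ (possibly after replacing $S$ by a Fourier--Mukai partner). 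Hence $F(X) \cong M_\sigma(v)$, which by~\cite{BM12},~\cite{BM13} is a projective hyperK\"ahler fourfold of $\text{K3}^{[2]}$-type.

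It then remains to identify $M_\sigma(v)$, up to birational equivalence, with $\mathrm{Hilb}^2$ of a K3 surface. One must first verify that $v$ has divisibility one in the full Mukai lattice $H^*(S,\ZZ)$; granting this, Eichler's criterion places $v$ in the same orbit as $(1,0,-1)$ under the group of Hodge isometries of $H^*(S,\ZZ)$ induced by autoequivalences, so there is an autoequivalence $\Psi \in \Aut(D^b(S))$ with $\Psi(v) = (1,0,-1)$. Then $\Psi$ identifies $M_\sigma(v)$ with $M_{\Psi(\sigma)}(1,0,-1)$, and by Bayer--Macri~\cite{BM13} the moduli space $M_\tau(1,0,-1)$ is, for generic $\tau$, independent of its chamber up to birational equivalence and coincides with $S^{[2]} = \mathrm{Hilb}^2(S)$ when $\tau$ lies in the Gieseker (large volume) chamber, since $v(I_Z) = (1,0,-1)$ for a length-two subscheme $Z \subset S$. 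As birational hyperK\"ahler manifolds are deformation equivalent~\cite{Huy99}, this shows $F(X) \cong M_\sigma(v)$ is birational to $\mathrm{Hilb}^2(S)$, as desired.

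The main obstacle is the very first step: Conjecture~\ref{Kuzconj} is wide open --- indeed, no irrational cubic fourfold is known --- so the implication ``$X$ rational $\Rightarrow \A_X \simeq D^b(S)$'' is not available, and the moduli description $F(X) = M_{\sigma_0}(\lambda)$ is itself still conjectural. Even granting both inputs, the divisibility check in the previous paragraph is genuinely restrictive: for very general $X \in \mathcal{C}_d$ it is equivalent to Addington's numerical condition~\cite{Ad} that $d = (2n^2 + 2n + 2)/a^2$ for some integers $n, a \geq 1$, which is strictly stronger than $(**)$, and when it fails the vector $v$ lies in a different Hodge-isometry orbit, so $M_\sigma(v)$ is not birational to $\mathrm{Hilb}^2(S')$ for any K3 surface $S'$. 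Thus along these lines one can realistically obtain only the conditional statement, together with the unconditional fact that $F(X)$ is birational to $\mathrm{Hilb}^2$ of a K3 surface for very general $X \in \mathcal{C}_d$ whenever $d$ meets Addington's condition; a complete proof of Conjecture~\ref{GSconj} would have to reconcile it with Conjecture~\ref{Kuzconj} at those discriminants satisfying $(**)$ for which Addington's condition fails.
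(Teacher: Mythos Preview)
The statement you are attempting to prove is labeled a \emph{conjecture} in the paper, and the paper gives no proof of it; there is therefore nothing in the paper to compare your argument against. Immediately after stating Conjecture~\ref{GSconj}, the paper remarks that Galkin and Shinder established it only under an auxiliary hypothesis now known to fail in general, and then records Addington's result that Conjecture~\ref{Kuzconj} and Conjecture~\ref{GSconj} are \emph{not compatible}. Concretely (Theorem~\ref{Adthm}): the discriminant $d=74$ satisfies $(*)$ and $(**)$ but not $(***)$, so for very general $X \in \mathcal{C}_{74}$ one has $\A_X \simeq D^b(S)$ for some K3 surface $S$, yet $F(X)$ is \emph{not} birational to $\mathrm{Hilb}^2$ of any K3 surface. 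Thus if Conjecture~\ref{Kuzconj} holds, such an $X$ is rational but violates Conjecture~\ref{GSconj}.

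This means your strategy---deduce Conjecture~\ref{GSconj} from Conjecture~\ref{Kuzconj} plus the moduli interpretation of $F(X)$---cannot succeed, and you have in fact located the exact obstruction yourself: the ``divisibility check'' you flag as ``genuinely restrictive'' is precisely Addington's condition $(***)$, and when it fails the vector $v$ does \emph{not} lie in the Hodge-isometry orbit of $(1,0,-1)$, so no autoequivalence $\Psi$ with $\Psi^H(v)=(1,0,-1)$ exists. Your final paragraph correctly identifies this, but the upshot is stronger than you state: the two conjectures you are trying to chain together are mutually exclusive (assuming the moduli description), so the argument is not merely ``conditional'' but self-defeating. No repair along these lines is possible; any proof of Conjecture~\ref{GSconj} would have to proceed without invoking Conjecture~\ref{Kuzconj}.
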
   
This conjecture comes from calculation in the Grothendieck ring of algebraic varieties. They proved Conjecture \ref{GSconj} under some assumption.  However, there are counterexamples of their assumptions (See \cite{B}, \cite{M}, \cite{IMOU}). Addington \cite{Ad} proved that Conjecture \ref{Kuzconj} and Conjecture \ref{GSconj} are not compatible. Conjecture \ref{GSconj} is true for known rational cubic fourfolds\cite{BD}, \cite{Ad}, \cite{Op}.

\begin{thm}$($\cite{Ad}$)$\label{Adthm}
 Let $X$ be a cubic fourfold.
 \begin{itemize}
 \item There is a K3 surface $S$ such that $F(X)$ is birational to a moduli space of  stable sheaves on $S$  if and only if  $d$ satisfies $(*)$ and 
  $(**)$.  
  \item There is a K3 surface $S$ such that $F(X)$ is birational to a Hilbert scheme of two points on $S$ if and only if there is an integer $d$ satisfying $(*)$ and
  \begin{center}
 $(***)$: The equation $a^2d=2n^2+2n+2$ has an integral solution $(a,n)$.
\end{center} 
 \end{itemize}
 The integer $74$ satisfies $(*)$ and $(**)$. However, it does not satisfy  $(***)$.
 \end{thm}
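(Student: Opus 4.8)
The plan is to convert all three assertions into statements about the weight-two Hodge structure on $H^2(F(X),\ZZ)$ equipped with its Beauville--Bogomolov form, and then to settle them by lattice theory together with one explicit Diophantine computation. Recall that $F(X)$ is a \hkmfd\ of $\text{K3}^{[2]}$-type and that, by Beauville--Donagi, the Abel--Jacobi map identifies $H^4(X,\ZZ)_{\mathrm{prim}}$ (with the cup product) and $H^2(F(X),\ZZ)_{\mathrm{prim}}$ (with the Beauville--Bogomolov form) as Hodge structures, up to sign, the Pl\"ucker polarization $g\in H^2(F(X),\ZZ)$ having $g^2=6$ and corresponding to $H^2\in H^4(X,\ZZ)$. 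Consequently, if $X\in\C_d$ with rank-two primitive sublattice $K_d\subset\Hodge$ of discriminant $d$ containing $H^2$, one reads off $\mathrm{NS}(F(X))$ (rank two, containing $g$) and its orthogonal complement in $H^2(F(X),\ZZ)$ explicitly in terms of $d$; for very general $X$ one has $\mathrm{NS}(F(X))=\ZZ g$, so $F(X)$ carries no $(-2)$-class and the ``only if'' directions are immediate in that range.

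The first tool I would invoke is the Torelli-type criterion for moduli spaces, which follows from Markman's Torelli theorem, Yoshioka's computation of the second cohomology of moduli spaces, and the description of the movable cone from Bayer--Macr\`i's wall-and-chamber analysis: a manifold $M$ of $\text{K3}^{[2]}$-type is birational to a moduli space of stable sheaves on some projective K3 surface if and only if there is a Hodge isometry $H^2(M,\ZZ)\cong v^{\perp}$ for some K3 surface $S$ and some primitive $v\in\Halg$ with $v^2=2$; and $M$ is birational to $S^{[2]}$ for some K3 surface $S$ if and only if, moreover, $v$ may be taken to be $(1,0,-1)$, i.e.\ (since $(1,0,-1)^{\perp}\cong H^2(S,\ZZ)\oplus\ZZ\delta_0$ with $\delta_0^2=-2$ and $\delta_0$ of divisibility two) if and only if $H^2(M,\ZZ)$ contains a primitive algebraic class $\delta$ with $\delta^2=-2$ of divisibility two whose orthogonal complement, with its induced Hodge structure, is isometric to the second cohomology of a K3 surface. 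Since the Mukai lattice $\widetilde{H}(S,\ZZ)\cong U^{\oplus4}\oplus E_8(-1)^{\oplus2}$ is unimodular with many hyperbolic summands, Eichler's criterion makes all primitive square-$2$ vectors conjugate, so the existence of the first Hodge isometry depends only on whether the transcendental lattice of $F(X)$ admits a primitive Hodge embedding into $\widetilde{H}(S,\ZZ)$; by Nikulin's theory of primitive embeddings this is governed by its discriminant form, hence by $d$.

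The second step is to match this with the work of Hassett (and, on the derived side, of Addington--Thomas): the non-special lattice $K_d^{\perp}$ of $F(X)$ is Hodge-isometric to the primitive cohomology of a polarized K3 surface of degree $d$ exactly when $d$ satisfies $(*)$ and $(**)$, and one checks that this is precisely the lattice condition extracted above for the ``moduli of sheaves'' statement; that proves the first bullet. For the second bullet I would fix a basis of $\mathrm{NS}(F(X))$ adapted to $g$ (with $g^2=6$) and to the discriminant $d$, write a general class $\delta=\alpha g+\beta\cdot(\text{non-special generator})$, and impose $\delta^2=-2$, primitivity, and divisibility two; completing the square produces the equation $a^2 d=2n^2+2n+2$, in which $a$ encodes the divisibility/index data and $n$ the coordinate of $\delta$ along the distinguished $(1,0,-1)$-direction. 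Conversely, an integral solution $(a,n)$ yields such a $\delta$, and one realizes $\delta^{\perp}$ as the $H^2$ of a genuine K3 surface by surjectivity of the period map, assembling a Hodge isometry $H^2(F(X),\ZZ)\cong H^2(S^{[2]},\ZZ)$ in the monodromy orbit so that Markman's theorem supplies the birational map.

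Finally, for $d=74$ one checks $74>6$ and $74\equiv2\pmod{6}$, so $(*)$ holds; $74=2\cdot37$ with $37\equiv1\pmod{3}$, and $74$ is divisible by neither $4$ nor $9$, so $(**)$ holds; whereas $a^2\cdot74=2n^2+2n+2$ is equivalent to $(2n+1)^2-148a^2=-3$, which has no integral solution --- this equation is everywhere locally solvable but not globally, as one sees from the continued-fraction expansion of $\sqrt{148}$ and its small convergents. I expect the main obstacle to be the first translation step: showing that ``birational to a moduli space of stable sheaves'' is genuinely \emph{equivalent} to, rather than merely implied by, the Hodge-lattice condition requires the full strength of Markman's Torelli theorem and of the Bayer--Macr\`i description of the movable cone (to rule out the vector $v$ landing on the wrong side of a wall), together with careful control of the monodromy group of $\text{K3}^{[2]}$-type manifolds; a secondary subtlety is keeping track of the divisibility of the distinguished $(-2)$-class, which is exactly what separates the general moduli case from the Hilbert-scheme case and hence yields the refined condition $(***)$ in place of $(**)$.
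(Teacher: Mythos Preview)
The paper does not actually prove this theorem: it is quoted from Addington \cite{Ad} and used as a black box, so there is no ``paper's own proof'' to compare against. Your sketch is a faithful outline of Addington's argument---reducing both assertions to lattice conditions on $H^2(F(X),\ZZ)$ via the Beauville--Donagi isomorphism, invoking Markman's Torelli theorem together with the Bayer--Macr\`i movable-cone description for the birational direction, and then matching the first condition with Hassett's $(*)$--$(**)$ and the second with the existence of a primitive $(-2)$-class of divisibility two, which unwinds to $(***)$; the verification that $74$ satisfies $(*)$ and $(**)$ but not $(***)$ via $(2n+1)^2-148a^2=-3$ is also correct.
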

 
We study automorphisms of Fano schemes of lines on special cubic fourfolds of discriminant $74$.  

\begin{thm}\label{cubicintro}
Take $X \in \C_{74}$ such that $\rk\Hodge=2$. Then there is an automorphism $\phi \in \Aut(F(X))$ such that $\htop(\phi)>0$.
\end{thm}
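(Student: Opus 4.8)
The plan is to deduce Theorem~\ref{cubicintro} from Theorem~\ref{mainintro} by exhibiting $F(X)$ as a moduli space of Bridgeland stable objects on the derived category of a K3 surface of Picard number one having only fake walls. First I would record the arithmetic: $74>6$, $74\equiv 2\pmod 6$, and $74=2\cdot 37$ with the only odd prime factor $37\equiv 1\pmod 3$, so $74$ satisfies Hassett's conditions $(*)$ and $(**)$. By Theorem~\ref{Adthm} (Addington), for such a cubic fourfold there is a K3 surface $S$ and a primitive Mukai vector $v\in\Halg$ with $F(X)$ birational to the moduli space of Gieseker stable sheaves on $S$ with Mukai vector $v$. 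Since $\rk\Hodge=2$, the Hodge structure of $X$ is as generic as possible in $\C_{74}$; an elementary lattice computation shows that the transcendental lattice of $X$ has rank $21$, hence $\mathrm{NS}(S)=\ZZ h$ with $h^2=74$ and $S$ has Picard number one, and comparing dimensions gives $v^2=\dim F(X)-2=2$. Writing $v=(r,h,s)$, we then have $v^2=h^2-2rs=2$, hence $rs=36$, and the class $[\pr(\mathcal O_{\mathrm{line}}(1))]$ under the Hodge-theoretic identification singles out the relevant pair $(r,s)$.

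Next I would verify that all walls with respect to $v$ are fake walls, which is precisely the hypothesis required to apply Theorem~\ref{mainintro}. By Bayer and Macr\`i's classification of walls, the potential walls for $v$ are indexed by the rank two hyperbolic primitive sublattices $\mathcal H\subset\Halg$ containing $v$, and the constraints on $\mathcal H$ leave only finitely many to inspect; for each such $\mathcal H$ one must rule out that it is a divisorial wall (no spherical class $a\in\mathcal H$ with $(a,v)=0$ and no isotropic class $w\in\mathcal H$ with $(w,v)\in\{1,2\}$) and that it is a flopping wall (no spherical class $a\in\mathcal H$ with $(a,v)=1$, since $v^2=2$). Because $v^2=2$ is small these conditions are very restrictive: the absence of a Brill--Noether divisorial wall is the statement that the rank two lattice $v^{\perp}\subset\Halg$ does not represent $-2$, and the remaining possibilities are eliminated by equally short arithmetic, entirely in the spirit of Example~\ref{mainexintro}.

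Finally, since all walls with respect to $v$ are fake, the moduli space $M_\sigma(v)$ does not depend on the choice of $v$-generic $\sigma\in\stab(S)$, and it is the unique hyperK\"ahler manifold in its birational equivalence class up to isomorphism (no flopping walls means no non-trivial birational model, by Bayer and Macr\`i's correspondence between wall-crossing and birational geometry); being birational to $F(X)$ it is therefore isomorphic to $F(X)$. Applying Theorem~\ref{mainintro} we obtain an autoequivalence $\Phi\in\Aut(D^b(S))$, of positive categorical entropy as in Proposition~\ref{positivederived}, for which $[E]\mapsto[\Phi(E)]$ is an automorphism $\phi$ of $M_\sigma(v)\cong F(X)$ with $\htop(\phi)>0$, which is the assertion of Theorem~\ref{cubicintro}.

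I expect the heart of the argument to be the wall computation of the second step. The value $d=74$ must be chosen with care: a single spherical class $a$ with $(a,v)\in\{0,1\}$ or a single isotropic class $w$ with $(w,v)\in\{1,2\}$ would create a divisorial or flopping wall and invalidate the whole strategy, and already the most naive Mukai vectors $v=(1,h,36)$ and $v=(2,h,18)$ fail for this reason (for instance $(2,h,19)$ is a spherical class orthogonal to $(2,h,18)$). Showing that $d=74$ — equivalently, a suitable $v$ with $rs=36$ — admits only fake walls, so that the resulting hyperK\"ahler fourfold is genuinely not a Hilbert scheme (recall that $74$ does not satisfy $(***)$), is where the real work lies; the passage from a birational to a biregular automorphism in the last step, though essential, is then immediate from the no-flops property.
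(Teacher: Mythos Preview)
Your overall strategy matches the paper's: realize $F(X)$ as a moduli space $M_\sigma(v)$ on a K3 surface of Picard number one, verify the numerical hypothesis of Corollary~\ref{maincor} (equivalently, that all walls for $v$ are fake and the positive cone has irrational boundary), and conclude via Theorem~\ref{mainintro}. The difference is in the bookkeeping. The paper never writes down $S$ or $v$ in the form $(r,h,s)$. Instead it works directly in $K_{\mathrm{num}}(\A_X)$ using Addington's Lemmas~\ref{NS} and~\ref{cubicmukai}: for $d=74=6\cdot 12+2$ one has $K_{\mathrm{num}}(\A_X)=\langle\lambda_1,\lambda_2,\tau\rangle$ with an explicit $3\times 3$ Gram matrix, the role of $v$ is played by $\lambda_1$ (so $v^2=2$ automatically), and $\mathrm{NS}(F(X))\cong\lambda_1^{\perp}$ has Gram matrix $\bigl(\begin{smallmatrix}6&-2\\-2&-24\end{smallmatrix}\bigr)$. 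The entire wall computation then reduces to two facts about this $2\times2$ matrix: its determinant $-148$ is not minus a square (no isotropics, hence irrational boundary of $\overline{\mathrm{Pos}}$), and $6x^2-4xy-24y^2=-2$ has no integer solutions (no $(-2)$-classes in $v^\perp$). This bypasses entirely the search for the ``right'' pair $(r,s)$ that you flag as the heart of the matter.

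Your detour through an explicit $(S,h,v)$ with $h^2=74$ is not wrong, but it makes the problem harder than necessary and your justification is incomplete. The claim $h^2=74$ happens to be true (one can check that $K_{\mathrm{num}}(\A_X)$ is isometric to $U\oplus\langle 74\rangle$, and under a suitable isometry $\lambda_1$ goes to $v=(4,h,9)$), but ``the transcendental lattice has rank $21$, hence $h^2=74$'' only yields $\rho(S)=1$, not the degree; the latter requires an actual lattice comparison. More importantly, your observation that $(1,h,36)$ and $(2,h,18)$ carry genuine walls is not an obstruction to the strategy---those Mukai vectors simply do not correspond to $\lambda_1$ under any isometry $K_{\mathrm{num}}(\A_X)\cong\Halg$ (indeed $M(1,h,36)\cong\mathrm{Hilb}^2(S)$, consistent with $74$ failing $(***)$). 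Once you adopt the paper's coordinate-free viewpoint, the ``real work'' you anticipate collapses to the two-line arithmetic above, and the worry about selecting $(r,s)$ evaporates.
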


Theorem \ref{cubicintro} gives examples of Theorem \ref{mainintro}.

\subsection{Plan of the paper}
In Section 2, we review the lattice theory,  hyperK\"ahler manifolds, Bridgeland stability conditions on K3 surfaces, moduli spaces of stable objects,  the theory of entropy and some results about automorphisms of  hyperK\"ahler manifolds.
In Section 3, we prove Proposition \ref{positivederived}, Theorem \ref{mainintro}, Example \ref{mainexintro} and Theorem \ref{converse}.
In Section 4, we prove Theorem \ref{cubicintro}.

\subsection{Notation}
We work over the complex number field $\mathbb{C}$. For a triangulated category $\D$ of finite type over $\mathbb{C}$, let $K(\D)$ be the Grothendieck group of $\D$. For objects $E,F \in \D$, we define $\chi(E,F):=\sum_{i\in\ZZ}\mathrm{dim}\mathrm{Ext}^i(E,F)$. we define the numerical Grothendieck group $K_{\mathrm{num}}(\D)$ as the quotient group $K(\D)/\mathrm{Ker}\chi$. Here, the subgroup $\mathrm{Ker}\chi$ consists of objects $E$ such that $\chi(E,F)=0$ for any object $F \in \D$.
If $\D$ is the derived category of a smooth projective variety $X$, we write $K(X)$ (resp. $K_{\mathrm{num}}(X)$) shortly.  For a smooth projective variety $X$, we denote the real vector space of $1$-cycles on $X$ modulo numerical equivalence by $N_1(X)$.  We assume that hyperK\"ahler manifolds are projective. We write the spherical twist $\mathrm{ST}_E$ of a spherical object $E \in D^b(X)$ for a smooth projective variety. 

 \subsection*{Acknowledgements}
The author would like to express his sincere gratitude to Professor Yukinobu Toda and Professor Keiji Oguiso for valuable comments and warmful encouragement. The author also would like to thanks Naoki Koseki for listening his talk patiently.  This work was supported by the program for Leading Graduate Schools, MEXT, Japan.  This work is also supported by Grant-in-Aid for JSPS Research Fellow 15J08505. 

\section{PRELIMINARY}
\subsection{Lattices}
In this subsection, we collect fundamental facts about the lattice theory.
\begin{dfn}
A lattice is a pair $(L,q)$ of a finitely generated free abelian group $L$ and an integer-valued non-degenerate quadratic form $q$. A lattice $(L,q)$ is called even if $q(v) \in 2\mathbb{Z}$ holds for any $v\in L$. We often denote a lattice $(L,q)$ by $L$. The signature of $L$ is the signature of the quadratic form $q$.
\end{dfn}
Primitivity of sublattices and discriminants of lattices are used in the definition of special cubic fourfolds of discriminant $d$.
\begin{dfn}
Let $L$ be a lattice.  A sublattice $N$ of $L$ is called primitive if the quotient $L/N$ is torsion free. A vector $v \in L$ is called primitive if the sublattice $\mathbb{Z}v$ is primitive, equivalently  $v=\lambda w$ for some $\lambda \in \mathbb{Z}$ and $w \in L$ implies $\lambda = \pm1$.
\end{dfn}

\begin{dfn}\label{disc}
Let $L$ be a lattice.  The discriminant $\mathrm{disc}L$ of $L$ is the determinant of a Gram matrix of $L$. A lattice $L$ is called unimodular if the discriminant $\mathrm{disc}L$  is $\pm1$.
\end{dfn}
Recall the notion of discriminant groups to give the statement of Lemma \ref{Nik}.
\begin{dfn}
Let $(L,q)$ be an even lattice.  Using the quadratic form $q$, we get the natural injective map $L \to L^*$. The quotient group $L^*/L$ is called the discriminant group $d(L)$ of $L$. This is an finite abelian group. The discriminant form $q_L:d(L) \times d(L) \to \mathbb{Q}/2\mathbb{Z}$ is the quadratic form on $d(L)$ induced by the quadratic form $q$.
\end{dfn}

\begin{dfn}
Let $L$ be an even lattice. We denote the group of isometries of $L$ by $O(L)$. The group $O(L)$ is called the orthogonal group of $L$. We also denote the group of isometries of the discriminant group by $O(d(L))$. For $g \in O(L)$, $\overline{g}$ is the isometry of $d(L)$ induced by $g$. There is the  homomorphism $O(L) \to O(d(L))$ sending $g \to \overline{g}$.
\end{dfn}

The following lemma will be used in Proposition \ref{infiniteorder}.
\begin{lem}\label{Nik}$($\cite{N}$)$
Let $N$ be a primitive sublattice of an even unimodular lattice $(L,q)$. Consider an isometry $g \in O(N)$. If $\overline{g}=1$ holds, then there is a lift $\tilde{g} \in O(L)$ such that $\tilde{g}|_N=g$ and $\tilde{g}|_{L^{\perp}}=1$.
\end{lem}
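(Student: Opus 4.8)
The plan is to exploit the overlattice description of $L$ in terms of the gluing between $N$ and its orthogonal complement. Write $M := N^{\perp}$ for the orthogonal complement of $N$ in $L$ (this is what the statement calls $L^{\perp}$). Since $L$ is even unimodular and $N$ is primitive, $M$ is again primitive, and there is a canonical anti-isometry of discriminant forms $\gamma : d(N) \xrightarrow{\sim} d(M)$. Under the inclusions $N \oplus M \subseteq L \subseteq N^* \oplus M^*$, the image of $L$ in $d(N\oplus M) = d(N) \oplus d(M)$ is exactly the graph $\Gamma_\gamma = \{(\bar x, \gamma(\bar x)) : \bar x \in d(N)\}$; equivalently, $L = \pi^{-1}(\Gamma_\gamma)$, where $\pi : N^* \oplus M^* \to d(N) \oplus d(M)$ is the quotient map. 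First I would recall (or cite from Nikulin's theory of discriminant forms) these standard facts.

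The candidate lift is the obvious one. Let $h := g \oplus \mathrm{id}_M$, viewed as an isometry of $N \oplus M$. An isometry of a lattice extends $\QQ$-linearly to an isometry of its dual, since the dual is intrinsically determined by the form; hence $h$ induces an isometry $h^*$ of $N^* \oplus M^*$ restricting to $g$ on $N$ and to the identity on $M$. The whole point is then to check that $h^*$ preserves the subgroup $L \subseteq N^* \oplus M^*$. Because $L = \pi^{-1}(\Gamma_\gamma)$ and $h^*$ commutes with $\pi$, it suffices to verify that the map induced by $h^*$ on $d(N) \oplus d(M)$ stabilizes $\Gamma_\gamma$.

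Here the hypothesis $\overline{g} = 1$ does all the work. The isometry $h = g \oplus \mathrm{id}_M$ induces on $d(N) \oplus d(M)$ the map $\overline{g} \oplus \overline{\mathrm{id}_M} = \overline{g} \oplus \mathrm{id}$, which equals the identity precisely because $\overline{g} = 1$ by assumption. The identity certainly fixes $\Gamma_\gamma$, so $h^*$ preserves $L$. Setting $\tilde{g} := h^*|_L$ then yields an element of $O(L)$, and by construction $\tilde{g}|_N = g$ and $\tilde{g}|_M = \mathrm{id}_M$, as required.

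The only genuine content is the overlattice/gluing description of $L$ together with the naturality of the homomorphism $O(N) \to O(d(N))$; once these are in place the argument is formal. I expect the step needing the most care to be the identification $L = \pi^{-1}(\Gamma_\gamma)$ — that is, that unimodularity of $L$ forces the gluing to be the graph of a \emph{bijective} $\gamma : d(N) \to d(M)$ — since it is exactly this feature that guarantees that a map fixing both discriminant groups pointwise automatically stabilizes $L$. I do not anticipate any serious obstacle beyond assembling these standard lattice-theoretic ingredients.
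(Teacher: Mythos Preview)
The paper does not supply a proof of this lemma; it simply cites Nikulin. Your argument is the standard discriminant-form proof and is correct: for a primitive sublattice $N$ of an even unimodular lattice $L$ with $M=N^\perp$, one has $d(N)\simeq d(M)$ via an anti-isometry $\gamma$, the overlattice $L\supset N\oplus M$ is the preimage of the graph $\Gamma_\gamma$, and $g\oplus\mathrm{id}_M$ extends to $L$ exactly when $\overline{g}\oplus\mathrm{id}$ preserves $\Gamma_\gamma$, which is automatic once $\overline{g}=1$. This is precisely the mechanism behind Nikulin's Proposition~1.6.1/Corollary~1.5.2, so your approach coincides with the source the paper invokes.
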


From K3 surfaces, we get the notion of Mukai lattice.
\begin{dfn}
Let $S$ be a K3 surface. We define the Mukai pairing $\langle -, -  \rangle$ on $H^*(S,\mathbb{Z})$ as follow:
\[ \langle(r_1,c_1,m_1),(r_2,c_2,m_2) \rangle := c_1c_2-r_1m_2-r_2m_1.\]
The lattice $H^*(S,\mathbb{Z})$ is called the Mukai lattice and it is the even unimodular lattice of signature $(4,20)$.
The Mukai lattice $H^*(S,\mathbb{Z})$ has the weight two Hodge structure $\widetilde{H}(S)$ defined by $\widetilde{H}^{2,0}(S)=H^{2,0}(S)$. We denote the algebraic part of the Mukai lattice $H^*(S,\mathbb{Z})$ by 
\[ \widetilde{H}^{1,1}(S,\mathbb{Z}) = \Big(\bigoplus_{i=0}^{2}H^{i,i}(S,\mathbb{Q})\Big) \cap H^*(S,\mathbb{Z}). \]
We denote the group of Hodge isometries on $H^*(S,\mathbb{Z})$ by $\OH(H^*(S,\ZZ))$. Let $\OHH(H^*(S,\ZZ)) \subset \OH(H^*(S,\ZZ))$ be the subgroup of all Hodge isometries preserving positive definite 4-spaces in $H^*(S,\mathbb{R})$.  
\end{dfn}
We will use the derived Torelli theorem to construct autoequivalences.
Taking cohomological Fourier-Mukai transforms, we get the homomorphism $(-)^H :\mathrm{Aut}(D^b(S)) \to \OH(H^*(S,\ZZ))$. 
\begin{thm}\label{torelli}$($\cite{Muk87}, \cite{Orl97}, \cite{HMS09}$)$
The image $\mathrm{Im}(-)^H$ of the homomorphism $(-)^H$ is equal to $\OHH(H^*(S,\ZZ))$.
\end{thm}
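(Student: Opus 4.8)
The plan is to prove the two inclusions $\mathrm{Im}(-)^H\subseteq\OHH(H^*(S,\ZZ))$ and $\OHH(H^*(S,\ZZ))\subseteq\mathrm{Im}(-)^H$ separately. For $\mathrm{Im}(-)^H\subseteq\OH(H^*(S,\ZZ))$ I would invoke Orlov's representability theorem, so that every exact autoequivalence of $D^b(S)$ is a Fourier--Mukai transform $\Phi_P$ with kernel $P\in D^b(S\times S)$, whose cohomological realization is $\alpha\mapsto\pr_{2*}(\pr_1^*\alpha\cup v(P))$ with $v(P)=\mathrm{ch}(P)\sqrt{\mathrm{td}(S\times S)}$. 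A Grothendieck--Riemann--Roch computation then shows this map is an isometry for the Mukai pairing: an equivalence preserves the Euler form $\chi$ on $K(S)$, and the Mukai vector $v\colon K(S)\xrightarrow{\sim}\algmukai$ turns $\chi$ into (minus) the Mukai pairing. Compatibility with the weight-two Hodge structure is forced as well, since $\Phi_P^H$ sends $\sigma_S$ to a generator of $H^{2,0}$. Hence $(-)^H$ takes values in $\OH(H^*(S,\ZZ))$.

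The delicate step --- and, I expect, the main obstacle --- is the refinement that $\Phi_P^H$ preserves the canonical orientation of the positive definite $4$-spaces in $H^*(S,\RR)$, i.e.\ lands in $\OHH$; without it the image would be all of $\OH$ and the statement would be false, so this is where the real work lies. I would deduce it by deformation invariance: the condition ``the cohomological action preserves the orientation'' is stable under deforming $S$, together with the kernel $P$ (which deforms as an object defining an equivalence), along an appropriate twistor family, so it suffices to verify it on one very general, non-algebraic fibre. For such a K3 surface $\mathrm{Pic}=0$, and one has an explicit description of the cohomological image of $\Aut(D^b)$, generated by the shift, automorphisms of the surface, and spherical twists by $\mathcal{O}_S$; each of these visibly lies in $\OHH$ --- the shift acts as $-\mathrm{id}$, hence by $(-1)^4=\mathrm{id}$ on the orientation of a positive $4$-space; automorphisms preserve $\sigma$ and the K\"ahler cone; and $\mathrm{ST}_{\mathcal{O}_S}^H$ is the reflection in the negative class $v(\mathcal{O}_S)=(1,0,1)$, which fixes a maximal positive subspace and is therefore orientation preserving. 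This route has the advantage of bypassing the (open) question whether $\Aut(D^b(S))$ itself is generated by standard autoequivalences.

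For the reverse inclusion, given $g\in\OHH(H^*(S,\ZZ))$ I would normalize it by composing with cohomological actions of autoequivalences I already control. Moduli spaces of stable sheaves with primitive isotropic Mukai vector are K3 surfaces Fourier--Mukai equivalent to $S$, and composing with the induced equivalences reduces to the case that a suitable conjugate of $g$ fixes the isotropic vector $(0,0,1)$; here the orientation hypothesis is precisely what makes the normalizations consistent --- the Hodge isometry which is $-\mathrm{id}$ on the hyperbolic summand $\ZZ(1,0,0)\oplus\ZZ(0,0,1)$ and $+\mathrm{id}$ on its complement fails to preserve orientation and is indeed \emph{not} realizable, so it is excluded on both sides. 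Since $(0,0,1)^{\perp}/\ZZ(0,0,1)\cong H^2(S,\ZZ)$ as Hodge lattices, $g$ now induces a Hodge isometry of $H^2(S,\ZZ)$; by the global Torelli theorem for K3 surfaces this is realized, up to $\pm\mathrm{id}$ and reflections in $(-2)$-classes, by an automorphism of $S$, and all of these lift to $D^b(S)$ (automorphisms directly, $\pm\mathrm{id}$ by the shift, $(-2)$-reflections by spherical twists). After these compositions the residual isometry fixes $(0,0,1)$ and acts trivially on $(0,0,1)^{\perp}/\ZZ(0,0,1)$, hence is unipotent and must equal $\exp(c_1(L))^H=(-\otimes L)^H$ for some $L\in\mathrm{Pic}(S)$, which is again realized. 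Unwinding the chain of compositions exhibits $g$ in $\mathrm{Im}(-)^H$, and together with the first part this yields the claimed equality.
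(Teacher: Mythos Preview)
The paper does not give its own proof of this statement: Theorem~\ref{torelli} is quoted as a known result, with the attribution \cite{Muk87}, \cite{Orl97}, \cite{HMS09} and no argument supplied. There is therefore nothing in the paper to compare your proposal against.

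That said, your outline tracks the structure of the cited literature quite accurately. The inclusion $\mathrm{Im}(-)^H\subseteq\OH$ is the classical Mukai--Orlov argument you describe (Fourier--Mukai representability plus Riemann--Roch), and the surjectivity onto the orientation-preserving subgroup is Orlov's reduction to the global Torelli theorem via normalization by moduli-type equivalences, essentially as you sketch. The orientation refinement $\mathrm{Im}(-)^H\subseteq\OHH$ is precisely the content of \cite{HMS09}, and their method is indeed a deformation argument to a generic (non-algebraic) K3 where the autoequivalence group is explicitly known. One caveat: the deformation of the kernel $P$ along a twistor line is more delicate than your phrase ``deforms as an object defining an equivalence'' suggests --- the kernel need not deform as a sheaf or complex in the na\"ive sense, and \cite{HMS09} had to develop substantial machinery (first-order deformations of complexes, formality of the Fourier--Mukai kernel) to make this step rigorous. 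If you intend this as a genuine proof rather than a pointer to the literature, that step needs considerably more detail; as a summary of the references the paper cites, it is faithful.
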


\begin{dfn}
We denote the subgroup $\mathrm{Aut}^0(D^b(S))$ of $\mathrm{Aut}(D^b(S))$ as follow:
\[ \mathrm{Aut}^0(D^b(S)):= \mathrm{Ker}((-)^H : \mathrm{Aut}(D^b(S)) \to\OHH(H^*(S,\ZZ)). \]
\end{dfn}
\subsection{HyperK\"ahler manifolds}
In this section, we recall basic terminology on hyperK\"ahler manifolds.

\begin{dfn}
A  hyperK\"ahler manifold is a simply connected smooth projective variety $M$ such that $H^0(\Omega_M^2)$ is spanned by an everywhere non-degenerate holomorphic two form.
\end{dfn}
In this paper, we always assume that hyperK\"ahler manifolds are projective. 

Let $M$ be a hyperK\"ahler manifold. The second cohomology group of $M$ has the non-degenerated quadratic form $q_M : H^2(M,\mathbb{Z}) \to \mathbb{Z}$ so called the Beauville-Bogomolov-Fujiki form. Then $H^2(M,\mathbb{Z})$ becomes an even unimodular lattice of signature $(3,b_2(M)-3)$ and $q_{M}$ satisfies the Fujiki relation
\[ \int_M \alpha^{2n} = F_M\cdot q_M(\alpha)^n. \]
Here, we write $\mathrm{dim}M=2n$ and $F_M>0$ is the Fujiki constant of $M$.

\begin{dfn}
Let $D$ be a divisor on $M$. The divisor $D$ is called movable if the intersection $\cap_{m\geq1}\mathrm{Bs}|mD|$ of base loci of $mD$ has at least codimension two in $M$. The divisor $D$ is called positive if $q_M(D)>0$ and $DH>0$ hold for a fixed ample divisor $H$.
\end{dfn}

\begin{dfn}
The ample cone $\mathrm{Amp}(M)$ of $M$ is the real cone generated by ample divisors on $M$. The movable cone $\mathrm{Mov}(M)$ of $M$ is the real cone generated by movable divisors on $M$. The positive cone $\mathrm{Pos}(M)$ is the positive cone of $M$ with respect to the Beauville-Bogomolov-Fujiki form. 
\end{dfn}

\subsection{Bridgeland stability conditions on K3 surfaces}
In this subsection, we recall the notion of Bridgeland stability conditions on derived categories of K3 surfaces. Let $S$ be a K3 surface.

\begin{dfn}
Let $E \in D^b(S)$ be an object in $D^b(S)$. We define the Mukai vector $v(E)$ of $E$ as follow:
\[ v(E):= \mathrm{ch}(E)\sqrt{\mathrm{td}(S)} \in \widetilde{H}^{1,1}(S,\mathbb{Z}).\]
The Mukai vector induces the surjective homomorphism
\[ v: K(S) \to \widetilde{H}^{1,1}(S,\mathbb{Z}).\]
\end{dfn}

\begin{rem}
By Hirzebruch-Riemann-Roch theorem, we have $-\chi(E,F)=\langle v(E),v(F) \rangle$ for any $E,F \in D^b(S)$. So the Mukai vector gives isometry  $v : K_{\mathrm{num}}(S) \to  \widetilde{H}^{1,1}(S,\mathbb{Z})$ with respect to the Euler pairing $-\chi(-,-)$ and the Mukai pairing $\langle-,- \rangle$. 
\end{rem}

\begin{dfn}$($\cite{Bri07}$)$
Fix a norm $||-||$ on  $K_{\mathrm{num}}(S)\otimes{\mathbb{R}}$. A stability condition $\sigma=(Z,\{\mathcal{P}(\phi)\}_{\phi \in \mathbb{R}})$ is a pair of a group homomorphism $Z:K_{\mathrm{num}}(S) \to \mathbb{C}$ (called a central charge) and a collection $\{\mathcal{P}(\phi)\}_{\phi \in \mathbb{R}}$ of full additive subcategories $\mathcal{P(\phi)}$ (called $\sigma$-semistable objects with phase $\phi$) such that the followings hold.
\begin{itemize}
\item For any $\phi \in \mathbb{R}$ and $0 \neq E \in \mathcal{P}(\phi)$, we have $Z(E) \in \mathbb{R}_{>0}e^{i \pi \phi}$.
\item For any $\phi \in \mathbb{R}$, we have $\mathcal{P}(\phi+1)=\mathcal{P}(\phi)[1]$.
\item If $\phi_1 > \phi_2$, then we have $\mathrm{Hom}(E_1,E_2)=0$ for any $E_i \in \mathcal{P}(\phi_i)$.
\item For any $E \in D^b(S)$, there exists exact triangles $E_{i-1} \to E_i \to F_i$ $(1 \leq i \leq n)$ and $\phi_1 > \phi_2 > \cdot \cdot \cdot >\phi_n$ such that $F_i \in \mathcal{P}(\phi_i)$ and $E_0=0, E_n=E$. 
This property is called the Harder-Narasimhan property.
\item There exists a constant $C>0$ such that $||E||<C\cdot |Z(E)|$ for any $0 \neq E \in \cup_{\phi \in \mathbb{R}} \mathcal{P}(\phi)$. This property is called the support property.
\end{itemize}
We denote the set of all stability conditions on $D^b(S)$ by $\mathrm{Stab}(S)$. 
For $\phi \in \mathbb{R}$, a simple object of $\mathcal{P}(\phi)$ is called a $\sigma$-stable object with phase $\phi$.
\end{dfn}

Bridgeland \cite{Bri07} proved that there is a structure of a complex manifold on $\mathrm{Stab}(S)$. 

\begin{thm}$($\cite{Bri07}$)$
There is the topology on  $\mathrm{Stab}(S)$ such that the map 
\[\mathrm{Stab} (S) \to \mathrm{Hom}_{\mathbb{Z}}(K_{\mathrm{num}}(S) ,\mathbb{C}), (Z,\mathcal{P}) \mapsto Z \]
is a local homeomorphism. In particular, the space of stability conditions $\mathrm{Stab}(S)$ on $S$ has a structure of a complex manifold. 
\end{thm}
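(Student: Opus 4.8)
The plan is to carry out Bridgeland's deformation argument. First I would equip $\Stab(S)$ with the topology induced by the generalised metric
\[ d(\sigma_1,\sigma_2)=\sup_{0\neq E\in D^b(S)}\Big\{\,\big|\phi^-_{\sigma_1}(E)-\phi^-_{\sigma_2}(E)\big|,\ \big|\phi^+_{\sigma_1}(E)-\phi^+_{\sigma_2}(E)\big|,\ \big|\log\tfrac{m_{\sigma_1}(E)}{m_{\sigma_2}(E)}\big|\,\Big\}\in[0,\infty], \]
where $\phi^\pm_\sigma(E)$ denote the largest and smallest phases occurring in the Harder--Narasimhan filtration of $E$ with respect to $\sigma$ and $m_\sigma(E)=\sum_i|Z(A_i)|$ is its total mass. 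The forgetful map $\pi\colon\Stab(S)\to\mathrm{Hom}_{\ZZ}(K_{\mathrm{num}}(S),\CC)$ is continuous for this topology, and since $K_{\mathrm{num}}(S)\cong\Halg$ has finite rank the target is a finite dimensional complex vector space; hence it suffices to prove that $\pi$ is a local homeomorphism.

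The core is the deformation lemma: given $\sigma=(Z,\mathcal{P})\in\Stab(S)$ there is $\epsilon_0>0$ such that whenever $W\colon K_{\mathrm{num}}(S)\to\CC$ is a group homomorphism with $|W(E)-Z(E)|<\sin(\pi\epsilon)\,|Z(E)|$ for every $\sigma$-semistable object $E$ and some $0<\epsilon<\epsilon_0$, there is a unique stability condition $\tau=(W,\mathcal{Q})\in\Stab(S)$ with $d(\sigma,\tau)<\epsilon$. To build $\mathcal{Q}$ I would keep the $\sigma$-semistable objects semistable while merely rotating their phases: an object $E$ that is $\sigma$-semistable of phase $\phi$ is declared $\tau$-semistable of phase $\psi$, where $\psi$ is the unique real number with $W(E)\in\RR_{>0}e^{i\pi\psi}$ and $|\psi-\phi|<\epsilon$ (it exists because the hypothesis moves arguments by less than $\pi\epsilon$), and then one checks that these objects and their shifts extend uniquely to a slicing $\mathcal{Q}$ whose subcategories refine those of $\mathcal{P}$ on slightly enlarged intervals. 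Uniqueness is immediate: any $\tau'$ with $d(\sigma,\tau')<\epsilon$ has central charge $W$ and, since all phases move by less than $\epsilon$, the same heart and the same refinements, so $\tau'=\tau$.

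The two points needing genuine work are the Harder--Narasimhan property and the support property for $\tau$. The support property in the definition of a stability condition forces $\sigma$ to be locally finite, so each $\mathcal{P}(\phi)$ is of finite length; hence the refinement of a $\sigma$-semistable object by the finer slicing terminates, producing a finite $\tau$-HN filtration, and concatenating these with the $\sigma$-HN filtration of an arbitrary object gives its $\tau$-HN filtration. For the support property of $\tau$ I would argue directly with the norm inequality. A $\tau$-semistable object $E$ of $\tau$-phase $\psi$ lies by construction in $\mathcal{P}((\psi-\epsilon,\psi+\epsilon))$, so its $\sigma$-HN factors $A_1,\dots,A_n$ have $\sigma$-phases within $2\epsilon$ of one another, whence the complex numbers $W(A_i)$ lie in a cone of angular width $<2\pi\epsilon$; vectors in such a narrow cone cannot cancel, so
\[ \|v(E)\|\ \le\ \sum_i\|v(A_i)\|\ <\ C\sum_i|Z(A_i)|\ \le\ C'\sum_i|W(A_i)|\ \le\ C''\,|W(E)|, \]
with constants uniform once $\epsilon_0$ is small. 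This verification is the main obstacle, since it is precisely here that the finite rank of $K_{\mathrm{num}}(S)\cong\Halg$ and the rigidity encoded in the support property enter; without it the deformed slicing need not again satisfy the support property.

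Finally, the lemma yields the local homeomorphism. For fixed $\sigma$ the sets $V_\epsilon=\{\,W : |W(E)-Z(E)|<\sin(\pi\epsilon)|Z(E)|\ \text{for every $\sigma$-semistable }E\,\}$ are open neighbourhoods of $Z$ on which $\pi$ admits the section $s\colon W\mapsto\tau$ produced by the lemma; the estimates $|\phi^\pm_\tau(E)-\phi^\pm_\sigma(E)|<\epsilon$ and the analogous bound on the mass ratios, built into the construction, make $s$ continuous, and the uniqueness clause shows that $s$ and $\pi$ are mutually inverse on a neighbourhood of $\sigma$. Hence $\pi$ is a local homeomorphism, and transporting the complex structure of $\mathrm{Hom}_{\ZZ}(K_{\mathrm{num}}(S),\CC)$ through $\pi$ makes $\Stab(S)$ a complex manifold.
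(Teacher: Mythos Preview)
The paper does not give a proof of this theorem; it is quoted as a preliminary result from \cite{Bri07} with no argument supplied. Your proposal outlines Bridgeland's original deformation proof, and the overall strategy---equipping $\Stab(S)$ with the generalised metric, proving a local lifting lemma for central charges, and transporting the complex structure---is the correct one.

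There is, however, a genuine gap in your construction of the deformed slicing $\mathcal{Q}$. You write that a $\sigma$-semistable object $E$ of phase $\phi$ ``is declared $\tau$-semistable of phase $\psi$'', i.e.\ that $\sigma$-semistability is preserved under the deformation with only a rotation of phase. This is false in general: perturbing the central charge can destabilise a $\sigma$-semistable object. Concretely, if $F\subset E$ is a $\sigma$-stable subobject of the same $\sigma$-phase $\phi$, the perturbation may give $F$ a strictly larger $W$-phase than $E$, and then $E$ cannot be $\tau$-semistable at any phase without violating the slicing axioms. Bridgeland's actual construction is different: for each $\psi$ one restricts $W$ to the quasi-abelian category $\mathcal{P}((\psi-\epsilon,\psi+\epsilon))$, where its image lies in a half-plane, and defines $\mathcal{Q}(\psi)$ to be the objects that are $W$-semistable of phase $\psi$ \emph{within that thin slice}. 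Your own later remark that ``the refinement of a $\sigma$-semistable object by the finer slicing terminates, producing a finite $\tau$-HN filtration'' is in tension with your stated construction, since an object already declared $\tau$-semistable would have trivial $\tau$-HN filtration; this internal inconsistency is exactly where the missing step shows up. Once $\mathcal{Q}$ is defined correctly via thin slices, the remainder of your argument (the HN property via local finiteness, the support property via the narrow-cone estimate, and the local-homeomorphism conclusion) goes through essentially as you describe.
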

 
\begin{rem}$($\cite{Bri07}$)$
Let $\sigma=(Z,\mathcal{P}) \in \mathrm{Stab}^*(S)$ be a stability condition. For an interval $I$, we denote by $\mathcal{P}(I)$ the extension-closed subcategory generated by objects $E \in \mathcal{P}(\phi)$ for all $\phi \in I$.  Then $\mathcal{P}((0,1])$ is the heart of a bounded t-structure in $D^b(S)$.
\end{rem}

 We recall the properties of spaces of stability conditions on K3 surfaces. Let $\mathcal{P}(S)$ be the set of all classes in  $\widetilde{H}^{1,1}(S,\mathbb{Z})\otimes \mathbb{C}$ , whose real and imaginary parts span a positive definite real plane in $ \widetilde{H}^{1,1}(S,\mathbb{Z}) \otimes \mathbb{R}$. The subset $\mathcal{P}^+(S)$ is the connected component of $\mathcal{P}(S)$, which contains a class $e^{i\omega} \in \widetilde{H}^{1,1}(S,\mathbb{Z})\otimes \mathbb{C}$ for an ample divisor $\omega$ on $S$. Let $\Delta(S)$ be the set of $(-2)$-classes in $\widetilde{H}^{1,1}(S,\mathbb{Z})$.
We define the subset $\mathcal{P}^+_0(S)$ as 
\[\mathcal{P}^+_0(S):=\mathcal{P}^+(S) \setminus \bigcup_{\delta \in \Delta(S)}\delta^\perp.\]

\begin{thm}[\cite{Bri08}]
There is a connected component $\mathrm{Stab}^*(S) \subset \mathrm{Stab}(S)$  such that the map $\pi : \mathrm{Stab}^*(S) \to \mathcal{P}^+_0(S)$ which sends $(Z,\mathcal{P}) \mapsto \Omega$ is a covering map. Here, the class $\Omega$ is determined by the property $Z(-)=\langle \Omega, - \rangle$. The group of all autoequivalences preserving $\mathrm{Stab}^*(S)$ and acting trivially on $H^*(S,\mathbb{Z})$ is the group of deck transformations of $\pi$.
\end{thm}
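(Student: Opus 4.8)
The plan is to take Bridgeland's general deformation result, already cited above as the local-homeomorphism statement, as the starting point, and to upgrade it to a covering map by exhibiting an explicit base chamber together with a path-lifting argument. First I would recall that the forgetful map $(Z,\mathcal{P}) \mapsto Z$ from $\Stab(S)$ to $\mathrm{Hom}_{\ZZ}(K_{\mathrm{num}}(S),\CC)$ is a local homeomorphism, and that the Mukai pairing together with the isometry $v : K_{\mathrm{num}}(S) \xrightarrow{\sim} \Halg$ identifies the target with $\Halg \otimes \CC$, writing $Z(-) = \langle \Omega, - \rangle$ for a unique $\Omega$. Thus $\pi$ is automatically a local homeomorphism onto its image, and two things remain: to identify the image with $\mathcal{P}^+_0(S)$, and to promote $\pi$ to a covering map.

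For the containment $\pi(\stab(S)) \subseteq \mathcal{P}^+_0(S)$ I would argue in two steps. Positivity: since $Z$ restricts to a stability function on the heart $\mathcal{P}((0,1])$, a signature computation in $\Halg$ (of signature $(2,\rho)$) forces $\mathrm{Re}\,\Omega$ and $\mathrm{Im}\,\Omega$ to span a positive-definite real plane, so $\Omega \in \mathcal{P}^+(S)$. Wall avoidance: a $(-2)$-class $\delta$ has $\chi = 2$ by Riemann--Roch and is realized by a spherical object $E$; if $\Omega$ were to lie on $\delta^\perp$, then $Z(E) = \langle \Omega, \delta \rangle = 0$ would force $E$ to be massless, incompatibly with the stability axioms. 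Making this rigorous uses the same support-property estimates as the covering argument below, so that $\Omega$ avoids every $\delta^\perp$ with $\delta \in \Delta(S)$ and $\pi$ lands in $\mathcal{P}^+_0(S)$.

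To establish surjectivity and the covering property I would first produce an explicit chamber. Tilting the standard heart $\mathrm{Coh}(S)$ at the torsion pair cut out by slope stability with respect to a pair $(\beta,\omega)$ with $\omega$ ample, and setting $Z_{\beta,\omega}(-) = \langle \exp(\beta + i\omega), - \rangle$, yields genuine stability conditions (the Harder--Narasimhan and support properties following from the Bogomolov--Gieseker inequality); these form an open set $U(S) \subseteq \stab(S)$ on which $\pi$ is a homeomorphism onto a concrete region of $\mathcal{P}^+_0(S)$, and $\stab(S)$ is by definition the component containing $U(S)$. I would then show that $\pi$ has the path-lifting property; since a local homeomorphism automatically has unique path lifting, and the base $\mathcal{P}^+_0(S)$ is an open subset of a complex vector space with the locally finite family of hyperplanes $\delta^\perp$ removed, hence connected and locally contractible, standard covering-space theory then forces $\pi$ to be a covering map.

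The hard part, which I expect to be the main obstacle, is the path-lifting property itself, and it reduces to a no-escape statement: if $\sigma_n = (Z_n,\mathcal{P}_n) \in \stab(S)$ with $\Omega_n \to \Omega \in \mathcal{P}^+_0(S)$, then a subsequence converges in $\Stab(S)$ to a stability condition projecting to $\Omega$, and the limit again lies in $\stab(S)$. The engine is the support property, which supplies a uniform bound $\|v(E)\| \leq C\,|Z_n(E)|$ on semistable objects; because $\Omega$ stays away from every $\delta^\perp$ with $\delta^2 = -2$, no class of negative square can become massless in the limit, and this is exactly what prevents the Harder--Narasimhan filtrations and the hearts $\mathcal{P}_n((0,1])$ from degenerating. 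Extracting a limiting slicing and checking that the associated t-structure is bounded and of the correct type, so that the limit stays in $\stab(S)$ rather than escaping to the boundary of $\Stab(S)$, is the delicate point where the support property and the spherical classes must be combined, and it is the content I would spend most of the work on. Finally, for the deck transformations: an autoequivalence acts on $\Stab(S)$ covering its cohomological action under $\pi$, so one preserving $\stab(S)$ and acting trivially on $H^*(S,\ZZ)$ satisfies $\pi \circ \Phi = \pi$ and is a deck transformation; conversely every deck transformation fixes all $\Omega$ and is realized by such an autoequivalence, giving the stated isomorphism.
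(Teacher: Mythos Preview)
The paper does not contain a proof of this statement: it is quoted in the preliminary section as a result of Bridgeland \cite{Bri08}, with no argument given. So there is nothing in this paper to compare your proposal against.

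That said, your outline is broadly in the spirit of Bridgeland's original argument in \cite{Bri08}: construct the explicit tilted stability conditions $\sigma_{\beta,\omega}$ to populate a region $U(S)$, define $\stab(S)$ as the component containing it, and then upgrade the local homeomorphism to a covering map. Two points in your sketch deserve tightening. First, your covering-map step conflates two things: local homeomorphisms have \emph{unique} path lifting when lifts exist, but existence of lifts is exactly what you must prove, and it is not a formal consequence of local contractibility of the base; this is where the hard analysis of degenerating stability conditions (your ``no-escape'' paragraph) genuinely lives. Second, your deck-transformation argument is one-sided: that an autoequivalence acting trivially on cohomology gives a deck transformation is immediate, but the converse---that every deck transformation is induced by such an autoequivalence---is not automatic from ``fixes all $\Omega$''. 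One needs to show the action of this subgroup of $\Aut(D^b(S))$ is simply transitive on fibers, which in Bridgeland's paper is obtained by analyzing the boundary of $U(S)$ and the gluing across walls.
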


\begin{conj}[\cite{Bri08}]\label{Briconj}
The group $\mathrm{Aut}(D^b(S))$ of autoequivalences of $D^b(S)$ preserves the connected component $\mathrm{Stab}^*(S)$. Moreover, $\mathrm{Stab}^*(S)$ is simply connected.
\end{conj}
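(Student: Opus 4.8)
This final statement is Bridgeland's conjecture, which is open in general, so the plan is to explain the strategy that establishes it when $\rho(S)=1$ — the only case invoked in this paper — and then to isolate the obstruction to the general case. First I would recast both assertions as statements about the covering map $\pi:\stab^*(S)\to\mathcal{P}^+_0(S)$ supplied by the preceding theorem. By Theorem \ref{torelli}, the homomorphism $(-)^H:\Aut(D^b(S))\to\OHH(H^*(S,\ZZ))$ is surjective with kernel $\Aut^0(D^b(S))$, and $\OHH(H^*(S,\ZZ))$ acts on $\mathcal{P}^+(S)$ preserving the locally finite hyperplane arrangement $\bigcup_{\delta\in\Delta(S)}\delta^\perp$, hence on $\mathcal{P}^+_0(S)$. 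The first assertion then follows once one shows that every element of $\OHH(H^*(S,\ZZ))$ lifts through $\pi$ to a self-homeomorphism of $\stab^*(S)$: since $\stab^*(S)$ is characterized intrinsically inside $\Stab(S)$ (for instance as the connected component containing the geometric stability conditions, which autoequivalences permute among themselves), any autoequivalence that preserves the components of $\Stab(S)$ at all must preserve $\stab^*(S)$. For the second assertion, "$\stab^*(S)$ simply connected" is equivalent to "$\pi$ is the universal cover of $\mathcal{P}^+_0(S)$"; granting the first assertion, the deck group of $\pi$ is exactly $\Aut^0(D^b(S))$ by the quoted theorem, so one is reduced to proving the surjection $\Aut^0(D^b(S))\twoheadrightarrow\pi_1(\mathcal{P}^+_0(S))$.

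Next, in the case $\rho(S)=1$, the lattice $\Halg$ has rank $3$ and signature $(2,1)$, so after dividing by the natural $\mathrm{GL}^+(2,\RR)$-action the period domain underlying $\mathcal{P}^+(S)$ is the hyperbolic plane and each wall $\delta^\perp$ ($\delta\in\Delta(S)$) becomes a geodesic. The set $\Delta(S)$ and the combinatorics of these geodesics are governed by a Pell-type equation, and one verifies that $\mathcal{P}^+_0(S)$ deformation-retracts onto a locally finite graph, hence is a $K(\pi,1)$ with $\pi_1$ free on explicit generators. One then realizes each generator by an autoequivalence acting trivially on cohomology: the monodromy around a wall $\delta^\perp$ is the spherical twist $\mathrm{ST}_E$ of the spherical object $E$ with $v(E)=\delta$ (up to shift), so these twists surject onto $\pi_1(\mathcal{P}^+_0(S))$. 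This both yields simple-connectedness of the total space and, combined with the surjectivity of $(-)^H$, forces all of $\Aut(D^b(S))$ to preserve $\stab^*(S)$; in fact one finds $\stab^*(S)$ contractible in this case.

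The main obstacle lies entirely in the last topological input. For $\rho(S)\geq 2$ the period domain has complex dimension at least two, the arrangement $\bigcup_{\delta}\delta^\perp$ is infinite and no longer decomposes the domain into a tractable cell structure, and there is at present no computation of $\pi_1(\mathcal{P}^+_0(S))$; showing that the spherical-twist subgroup of $\Aut^0(D^b(S))$ exhausts it — equivalently that $\pi$ is the universal cover — is precisely the hard open core of the conjecture. I would not expect to resolve this in general. Fortunately only the $\rho(S)=1$ case enters the arguments of this paper, and there the strategy above is complete.
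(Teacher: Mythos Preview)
The paper offers no proof here: the statement is recorded as Bridgeland's open conjecture, and the paper simply cites \cite{BB} for the $\rho(S)=1$ case in the theorem immediately following. You recognize this correctly, so at the level of comparison with the paper there is nothing further to match.

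Your sketch of the $\rho(S)=1$ strategy, however, has a genuine gap in the reduction for the first assertion. You claim $\stab(S)$ is preserved by $\Aut(D^b(S))$ because it is ``the connected component containing the geometric stability conditions, which autoequivalences permute among themselves.'' But autoequivalences do \emph{not} preserve the set of geometric stability conditions: $\sigma$ is geometric when every skyscraper $\mathcal{O}_x$ is $\sigma$-stable of a common phase, and for a general $\Phi$ the objects $\Phi(\mathcal{O}_x)$ are not skyscrapers, so $\Phi_*\sigma$ is typically not geometric (already $\mathrm{ST}_{\mathcal{O}_S}$ moves a geometric $\sigma$ out of the geometric chamber). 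Your ``intrinsic characterization'' is therefore not $\Aut(D^b(S))$-invariant, and invoking it to deduce invariance of $\stab(S)$ is circular. Nor does the lifting statement help on its own: knowing that every element of $\OHH(H^*(S,\ZZ))$ lifts to $\Aut^*(D^b(S))$ (this is Proposition~\ref{Huysurj}) does not give $\Aut=\Aut^*$, since two autoequivalences with the same cohomological action differ by an element of $\Aut^0(D^b(S))$, and one must still show \emph{that} element preserves $\stab(S)$. This is precisely where the substantive work in \cite{BB} sits, and it is not bypassed by the covering-space formalities you describe. A smaller slip: the deck transformation attached to a small loop around $\delta^\perp$ is $\mathrm{ST}_E^2$, not $\mathrm{ST}_E$, since $\mathrm{ST}_E^H$ is the reflection $s_\delta\neq\mathrm{id}$ and hence $\mathrm{ST}_E\notin\Aut^0(D^b(S))$.
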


\begin{thm}$($\cite{BB}$)$
Assume that $S$ has Picard number $\rho(S)=1$. Then Conjecture \ref{Briconj} is true.
\end{thm}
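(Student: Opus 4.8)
The plan is to exploit the covering map $\pi\colon\stab(S)\to\mathcal{P}^+_0(S)$ supplied by the preceding theorem and to reduce both assertions of Conjecture \ref{Briconj} to (i) identifying the homotopy type of the base $\mathcal{P}^+_0(S)$ when $\rho(S)=1$ and (ii) realizing its monodromy by autoequivalences. In Picard rank one the lattice $\Halg$ has rank $3$ and signature $(2,1)$; passing to the $\CC^*$-quotient, $\mathcal{P}^+(S)/\CC^*$ fibres over the hyperbolic plane $\mathbb{H}$ of positive-definite $2$-planes in $\RR^{2,1}$ with contractible fibres, and for each $(-2)$-class $\delta\in\Delta(S)$ the locus $\delta^\perp$ meets $\mathcal{P}^+(S)$ only over the single point $p_\delta\in\mathbb{H}$ cut out by $\delta$. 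Hence $\mathcal{P}^+_0(S)/\CC^*$ is homotopy equivalent to $\mathbb{H}\setminus\{p_\delta:\delta\in\Delta(S)\}$, a disc with a discrete infinite set of punctures, so its fundamental group is free of infinite rank on the small loops $\lambda_\delta$ around the $p_\delta$; restoring the $\CC^*$-bundle presents $\pi_1(\mathcal{P}^+_0(S))$ as a central extension of this free group by the $\ZZ$ coming from the rotation circle.

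Next I would relate the induced wall-and-chamber decomposition of $\stab(S)$ to derived autoequivalences. For every $\delta\in\Delta(S)$ there is, unique up to shift, a spherical object $E_\delta\in D^b(S)$ with $v(E_\delta)=\delta$, and its central charge $\langle\Omega_\sigma,\delta\rangle$ vanishes exactly over $\delta^\perp$. Carrying out the local analysis of $\stab(S)$ near this wall---the degeneration is of $A_1$ type, with $E_\delta$ and its shifts the only objects whose semistability it governs---I would show that the $\pi$-lift of the loop $\lambda_\delta$ is a path from a stability condition $\sigma$ to $\mathrm{ST}_{E_\delta}^2(\sigma)$. The square is forced: $\mathrm{ST}_{E_\delta}$ acts on the Mukai lattice as the reflection $s_\delta$ in $\delta^\perp$, so only $\mathrm{ST}_{E_\delta}^2$ is $H$-trivial, and encircling $\delta^\perp$ rotates $\langle\Omega,\delta\rangle$ once around $0$, raising the phase of $E_\delta$ by $2$, in accordance with $\mathrm{ST}_{E_\delta}^2(E_\delta)=E_\delta[-2]$; likewise the rotation loop lifts to the path realizing the shift $[2]$. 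Being realized by lifts of loops in the base, $\mathrm{ST}_{E_\delta}^2$ and $[2]$ carry $\stab(S)$ into itself, hence---being homeomorphisms and $\stab(S)$ a connected component---preserve it; they are $H$-trivial, so they are deck transformations of $\pi$, and their classes generate $\pi_1(\mathcal{P}^+_0(S))$.

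Granting this, simple connectivity follows from the tree-like combinatorics of the chamber decomposition. The geometric chamber---stability conditions whose heart is a tilt of $\mathrm{Coh}(S)$ with all skyscraper sheaves stable---is convex, hence contractible, and every chamber of $\stab(S)$ is obtained from it by a composite of the twists $\mathrm{ST}_{E_\delta}$; the adjacency graph of this decomposition has no cycles, because distinct $(-2)$-classes of $\Halg$ that are not negatives of one another necessarily satisfy $|\langle\delta_1,\delta_2\rangle|\geq 2$---a signature-$(2,1)$ lattice contains no negative-definite $2$-plane---so no braid or commutation relation among the $\mathrm{ST}_{E_\delta}$ can glue two walls into a loop. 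A space assembled from contractible chambers along a tree is simply connected; equivalently, the monodromy of $\pi_1(\mathcal{P}^+_0(S))$ on a fibre is, point by point, an iterate of the fixed-point-free operations $\mathrm{ST}_{E_\delta}^2$ and $[2]$, hence free, and being transitive $\pi$ is the universal cover, so $\stab(S)$ is simply connected. For the $\Aut(D^b(S))$-invariance: by the derived Torelli theorem (Theorem \ref{torelli}), together with the generation of $\OHH$ by reflections and geometric isometries, $\Aut(D^b(S))$ is generated by shifts, twists by line bundles, automorphisms of $S$, the spherical twists $\mathrm{ST}_{E_\delta}$, and $H$-trivial autoequivalences; the first three visibly preserve the component $\stab(S)$, which contains the large-volume stability conditions; each $\mathrm{ST}_{E_\delta}$ preserves it by the half-loop version of the wall analysis above; and the $H$-trivial autoequivalences are exactly the deck group $\langle\mathrm{ST}_{E_\delta}^2,[2]\rangle$ identified in the course of the argument, which preserves $\stab(S)$ by construction. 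Hence all of $\Aut(D^b(S))$ preserves $\stab(S)$.

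The hard part is the second paragraph together with the cycle-freeness claimed in the third: pinning down the local monodromy around a $(-2)$-wall as $\mathrm{ST}_{E_\delta}^2$ (and its half-loop refinement for the spherical twist itself), and proving that the chamber combinatorics in rank one carries no hidden cycles, so that $\stab(S)$ is genuinely contractible rather than merely some covering of the punctured base. Controlling which spherical objects are $\sigma$-stable in each chamber and excluding configurations of several walls through a common locus---both resting on the arithmetic peculiarity of $(-2)$-classes in a signature-$(2,1)$ lattice---is the crux, and is precisely what breaks down once $\rho(S)\geq 2$.
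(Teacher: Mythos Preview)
The paper does not supply a proof of this theorem at all: it is quoted from \cite{BB} as background in the preliminary section and used henceforth as a black box. There is therefore no proof in the paper to compare your proposal against.

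That said, your outline is broadly the strategy of Bayer--Bridgeland: reduce to the topology of $\mathcal{P}^+_0(S)$ via Bridgeland's covering map, exploit that in Picard rank one $\Halg$ has signature $(2,1)$ so that the $(-2)$-hyperplanes $\delta^\perp$ give point-punctures in a hyperbolic-plane model, identify the monodromy around each puncture with a squared spherical twist, and show that the resulting deck group acts freely and transitively. One caution: your sentence ``for every $\delta\in\Delta(S)$ there is, unique up to shift, a spherical object $E_\delta$ with $v(E_\delta)=\delta$'' is not an input but one of the main theorems of \cite{BB} in Picard rank one, and its proof is intertwined with the wall-crossing analysis you invoke later; you should not assume it at the outset. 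Likewise, the ``tree-like'' claim for the chamber adjacency graph is correct in spirit---the absence of negative-definite $2$-planes in signature $(2,1)$ is indeed the key lattice fact---but making this rigorous (excluding accumulation of walls, controlling which spherical objects are stable in each chamber, and verifying that the free product of $\ZZ$'s really is the deck group rather than a quotient) is the substantial technical content of \cite{BB}, as you yourself flag in your final paragraph.
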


The following proposition is a stronger version of Theorem \ref{torelli}. We will use this version in Section 3.

\begin{prop}$($\cite{Huy14}$)$\label{Huysurj}
Let $\mathrm{Aut}^*(D^b(S))$ be the group of autoequivalences of $D^b(S)$ preserving $\mathrm{Stab}^*(S)$.
The homomorphism 
\[(-)^H :\mathrm{Aut}^*(D^b(S)) \to \OHH(H^*(S,\ZZ))\]
is surjective.
\end{prop}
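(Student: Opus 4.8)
The plan is to realise a set of generators of $\OHH(H^*(S,\ZZ))$ by autoequivalences that are \emph{visibly} in $\mathrm{Aut}^*(D^b(S))$. First observe that $\mathrm{Aut}^*(D^b(S))$ really is a subgroup of $\mathrm{Aut}(D^b(S))$: since $\stab(S)$ is a connected component of $\Stab(S)$ and every autoequivalence permutes the connected components of $\Stab(S)$, the condition $\Phi(\stab(S))=\stab(S)$ is stable under composition and inversion. Consequently $(-)^H$ restricts to a group homomorphism on $\mathrm{Aut}^*(D^b(S))$, whose image is contained in $\OHH(H^*(S,\ZZ))$ by Theorem \ref{torelli}. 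Hence it suffices to produce autoequivalences $\Phi_1,\dots,\Phi_k \in \mathrm{Aut}^*(D^b(S))$ whose cohomological actions $\Phi_1^H,\dots,\Phi_k^H$ generate $\OHH(H^*(S,\ZZ))$; then the image of $(-)^H$ on $\mathrm{Aut}^*(D^b(S))$ contains $\langle \Phi_1^H,\dots,\Phi_k^H\rangle = \OHH(H^*(S,\ZZ))$, which is the asserted surjectivity (the reverse inclusion being automatic).

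Next I would assemble the ``geometric'' autoequivalences that manifestly preserve $\stab(S)$, together with their actions on $H^*(S,\ZZ)$: for $L \in \mathrm{Pic}(S)$, the twist $-\otimes L$ acts by $\exp(c_1(L))$; for $f \in \mathrm{Aut}(S)$, the pushforward $f_*$ acts by $f^*$ diagonally on $H^0\oplus H^2\oplus H^4$; the shift $[1]$ acts by $-\mathrm{id}$; and for a spherical object $E$, the spherical twist $\mathrm{ST}_E$ acts by the reflection $w\mapsto w+\langle w,v(E)\rangle v(E)$ in the $(-2)$-class $v(E)$. Twisting by line bundles, pushing forward automorphisms of $S$ and shifting commute with, or belong to, the $\widetilde{\mathrm{GL}}^+(2,\RR)$-action on $\Stab(S)$ and thus preserve the distinguished component; that $\mathrm{ST}_E$ preserves $\stab(S)$ for spherical $E$ is part of Bridgeland's analysis of $\stab(S)$ in \cite{Bri08}. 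If further generators are needed I would also invoke that a Fourier--Mukai equivalence $D^b(M)\xrightarrow{\sim}D^b(S)$ between K3 surfaces carries $\stab(M)$ onto $\stab(S)$, so that autoequivalences built by passing to a Fourier--Mukai partner and back again also lie in $\mathrm{Aut}^*(D^b(S))$.

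The crux is then the purely lattice-theoretic statement that $\OHH(H^*(S,\ZZ))$ is generated by the cohomological actions listed above --- i.e.\ by $\exp(\mathrm{NS}(S))$, by $-\mathrm{id}$, by $f^*$ for $f\in\mathrm{Aut}(S)$, by the reflections in $(-2)$-classes of $\algmukai$, and, if necessary, by the isometries coming from Fourier--Mukai partners. This is exactly the arithmetic content underlying the derived global Torelli theorem and the Hodge-theoretic orientation theorem behind Theorem \ref{torelli}: one decomposes a Hodge isometry of the unimodular Mukai lattice along $\algmukai \oplus T(S)$, uses Nikulin's gluing lemma (Lemma \ref{Nik}) to recombine isometries of the two summands that induce the same map on the common discriminant group, studies the orthogonal group of $\algmukai$ (a lattice of signature $(2,\rho(S))$) through its $(-2)$-reflections and its $\exp(\mathrm{NS})$-translations, and keeps track throughout of the orientation of positive-definite $4$-spaces, which is what singles out $\OHH$ inside $\OH$. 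I expect this lattice-and-orientation bookkeeping to be the main obstacle; in particular one must handle CM K3 surfaces separately, since there $\mathrm{O}_{\mathrm{Hodge}}(T(S))$ can be strictly larger than $\{\pm\mathrm{id}\}$ and the resulting Hodge isometries of $H^*(S,\ZZ)$ are realised not by the obvious geometric autoequivalences but through Fourier--Mukai partners. Granting this, each generator has a lift in $\mathrm{Aut}^*(D^b(S))$ by the previous paragraph, and the reduction in the first paragraph completes the argument.

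Finally, in the case relevant to this paper, $\rho(S)=1$, the statement is immediate: by the theorem of \cite{BB} quoted above, Conjecture \ref{Briconj} holds, hence $\mathrm{Aut}^*(D^b(S)) = \mathrm{Aut}(D^b(S))$ and the surjectivity is precisely Theorem \ref{torelli}.
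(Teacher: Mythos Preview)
The paper does not prove this proposition; it is quoted from \cite{Huy14} as a black box, so there is no argument in the paper to compare against. Your outline is a reasonable sketch of how the proof in \cite{Huy14} runs, and your final paragraph is entirely correct and worth emphasising: since the present paper only ever applies the proposition to K3 surfaces of Picard number one, the statement is an immediate consequence of the theorem of \cite{BB} (giving $\mathrm{Aut}^*(D^b(S))=\mathrm{Aut}(D^b(S))$) together with Theorem \ref{torelli}.

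One point of emphasis in your sketch is slightly misplaced. You identify the ``lattice-and-orientation bookkeeping'' as the main obstacle, but that bookkeeping is exactly the content of Theorem \ref{torelli} (the derived Torelli theorem of \cite{Muk87}, \cite{Orl97} together with the orientation result of \cite{HMS09}), which the paper already takes as given. The genuinely new ingredient supplied by \cite{Huy14} is the verification that the \emph{specific} autoequivalences used in the standard proof of Theorem \ref{torelli} --- line bundle twists, shifts, the spherical twist $\mathrm{ST}_{\mathcal{O}_S}$, and Fourier--Mukai transforms with universal families on two-dimensional fine moduli spaces of Gieseker-stable sheaves --- all preserve the distinguished component $\stab(S)$. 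The first three are indeed covered by \cite{Bri08}; the last is the delicate step, and your sentence ``a Fourier--Mukai equivalence $D^b(M)\xrightarrow{\sim}D^b(S)$ between K3 surfaces carries $\stab(M)$ onto $\stab(S)$'' is precisely what has to be \emph{proved} here rather than invoked. Relatedly, your blanket assertion that $\mathrm{ST}_E$ preserves $\stab(S)$ for \emph{every} spherical $E$ is stronger than what \cite{Bri08} establishes and stronger than what the argument needs.
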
 

We recall wall and chamber structures on spaces of stability conditions on K3 surfaces.
\begin{dfn}
Let $v$ be a primitive Mukai vector. A stability condition $\sigma \in \mathrm{Stab}^*(S)$ is called $v$-generic if there is no strictly $\sigma$-semistable objects with Mukai vector $v$.   
\end{dfn}

\begin{prop}$($\cite{Bri08}, \cite{Tod08}, \cite{BM13})
Fix a primitive Mukai vector $v \in   \widetilde{H}^{1,1}(S,\mathbb{Z})$. There is a locally finite set of walls (real codimension one submanifolds) in $\mathrm{Stab}(S)$, which satisfies the following properties.  We call a connected component of the complement of walls a chamber. 
\begin{itemize}
\item Let $\mathcal{C}$ be a chamber. If $E$ is a $\sigma$-semistable object with Mukai vector $v$, then $E$ is $\tau$-semistable for any $\tau \in \mathcal{C}$. 
\item A stability condition $\sigma \in \mathrm{Stab}^*(S)$ is in some chamber if and only if $\sigma$ is $v$-generic.
\end{itemize}
\end{prop}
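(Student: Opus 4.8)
The plan is to realize the walls as the loci in $\Stab(S)$ where an object of class $v$ acquires a semistable subobject of the same phase, to show that these loci are locally finite by using the support property, and then to extract the two stated properties from local finiteness together with the continuity of central charges and the semicontinuity of Harder-Narasimhan filtrations. To set up the candidate walls, for a class $w\in\Halg$ let $W^{\mathrm{num}}_w\subset\Stab(S)$ be the set of $\sigma=(Z,\mathcal P)$ with $Z(w)=tZ(v)$ for some real number $0<t<1$. Since $v$ is primitive, $w$ can never be a proper rational multiple of $v$, so $\mathrm{Im}\big(\overline{Z(v)}Z(w)\big)=0$ is a nondegenerate real equation; as the map $(Z,\mathcal P)\mapsto Z$ is a local homeomorphism onto an open subset of $\mathrm{Hom}_\ZZ(K_{\mathrm{num}}(S),\CC)$, the set $W^{\mathrm{num}}_w$ is an open subset of a real-codimension-one submanifold. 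I would then define the wall $W_w$ to consist of those $\sigma\in W^{\mathrm{num}}_w$ admitting a $\sigma$-semistable object $E$ with $v(E)=v$ and a nonzero proper $\sigma$-semistable subobject $A$ with $v(A)=w$ and $\phi_\sigma(A)=\phi_\sigma(E)$; following the wall-crossing analysis of Bayer and Macri, $W_w$ is a union of connected components of the complement in $W^{\mathrm{num}}_w$ of the smaller walls, hence itself a real-codimension-one submanifold, and the walls for $v$ are the nonempty $W_w$.

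The crux is local finiteness of $\{W_w\}$. Fix $\sigma_0\in\Stab(S)$ and choose a compact neighborhood $K$ small enough that a single constant $C$ realizes the support property simultaneously for every $\sigma\in K$---this is possible precisely because the support property is an open condition, encodable by a quadratic form on $K_{\mathrm{num}}(S)\otimes\RR$, which is the very input that makes $\Stab(S)$ a complex manifold. If $W_w\cap K\neq\varnothing$, pick $\sigma\in W_w\cap K$ together with $A\hookrightarrow E$ as above. Then $A$ is a $\sigma$-semistable object, so $\|w\|=\|v(A)\|<C\,|Z_\sigma(w)|<C\,|Z_\sigma(v)|\le C\cdot\max_{\tau\in K}|Z_\tau(v)|=:C'$, a bound depending only on $v$ and $K$. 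Since $\Halg$ is a lattice, the ball $\{w\in\Halg:\|w\|<C'\}$ is a finite set, so only finitely many $W_w$ meet $K$, and the walls are locally finite.

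Granting local finiteness, let $\mathcal C$ be a connected component of $\Stab(S)\setminus\bigcup_w W_w$. For the first property, suppose $E$ with $v(E)=v$ is $\sigma_1$-semistable for some $\sigma_1\in\mathcal C$. The set $\{\sigma\in\mathcal C:E\text{ is }\sigma\text{-semistable}\}$ coincides with $\{\sigma\in\mathcal C:m_\sigma(E)=|Z_\sigma(v)|\}$, where $m_\sigma(E)$ denotes the sum of $|Z_\sigma(\,\cdot\,)|$ over the Harder-Narasimhan factors of $E$; this set is closed in $\mathcal C$ by continuity of the mass, and it is open because if $E$ were $\sigma'$-unstable for some $\sigma'$ near such a $\sigma$, its maximal destabilizing subobject $A'$ would be $\sigma'$-semistable with $\phi_{\sigma'}(A')>\phi_{\sigma'}(E)$, forcing $\phi_\sigma(A')\ge\phi_\sigma(E)$ and hence either destabilizing $E$ at $\sigma$ or placing $\sigma\in W_{v(A')}$---both impossible inside $\mathcal C$. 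Being nonempty, open and closed in the connected set $\mathcal C$, it equals $\mathcal C$, so $E$ is $\tau$-semistable for every $\tau\in\mathcal C$. For the second property: if $\sigma\in W_w$ then by construction there is a strictly $\sigma$-semistable object of class $v$, so $\sigma$ is not $v$-generic; conversely, if $\sigma\in\mathcal C$ and some $\sigma$-semistable $E$ of class $v$ were strictly semistable, then a proper Jordan-H\"older factor $A\subsetneq E$ would give $w=v(A)$ with $Z_\sigma(w)=tZ_\sigma(v)$ for some $0<t<1$, so $\sigma\in W_w$, a contradiction. Hence $\sigma$ is $v$-generic if and only if it lies in a chamber.

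I expect the main obstacle to be the local-finiteness step, and within it the uniformity of the support-property constant over compact subsets of $\Stab(S)$: this uniformity---equivalently, the quadratic-form reformulation of the support property---is what simultaneously makes $\Stab(S)$ a manifold and confines the possible destabilizing classes $w$ near a given point to a fixed norm ball, hence to a finite set. The auxiliary continuity and closedness statements used in the chamber argument, namely continuity of the mass function and the fact that limits of semistable objects of a fixed class remain semistable, are exactly where the boundedness theorem of Toda and the wall-crossing analysis of Bayer and Macri enter.
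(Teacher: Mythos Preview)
The paper does not prove this proposition; it is stated as a preliminary result with attribution to \cite{Bri08}, \cite{Tod08}, \cite{BM13} and no argument is given. So there is no ``paper's own proof'' to compare against. Your sketch follows the standard line of argument found in those references: numerical walls indexed by potential destabilizing classes, local finiteness via the support property bounding $\|w\|$ on compact sets, and an open--closed argument for constancy of semistability on chambers.

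Two places deserve tightening. First, the uniform support-property constant over a compact $K$: you invoke the quadratic-form reformulation, which is correct, but note that in Bridgeland's original paper this is not how it is phrased; the cleanest route is the Kontsevich--Soibelman formulation, which you implicitly use. Second, your openness step in the chamber argument is slightly circular as written: you take a destabilizing subobject $A'$ at a nearby $\sigma'$ and want to conclude $\sigma\in W_{v(A')}$, but for this you need to know that $v(A')$ ranges over a \emph{fixed finite set} independent of $\sigma'$ (so that you can remove the corresponding numerical walls from $\mathcal C$ in advance). The standard fix is to argue on a compact neighborhood first: the finitely many possible destabilizing classes $w_1,\dots,w_n$ are determined by the norm bound, the numerical loci $W^{\mathrm{num}}_{w_i}$ are closed, and on each component of the complement the phase ordering of each $w_i$ relative to $v$ is constant, so semistability cannot change. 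Your mass-function formulation is fine for closedness but does not by itself give openness without this finiteness input.
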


In Subsection 2.4, we will recall Bayer and Macri's classification of walls \cite{BM12}, \cite{BM13}.

\subsection{Moduli spaces of Bridgeland stable objects on K3 surfaces}
In this subsection, we recall Bayer and Macri's work on moduli spaces of Bridgeland stable objects on K3 surfaces \cite{BM12}, \cite{BM13}. 
Let $S$ be a K3 surface. Let $v \in \widetilde{H}^{1,1}(S,\mathbb{Z})$ be a primitive Mukai vector with $v^2>0$. Take a $v$-generic stability condition $\sigma \in \mathrm{Stab}^*(S)$. Denote the moduli stack of $\sigma$-semistable objects with Mukai vector $v$ by $\mathcal{M}_\sigma(v)$.  

Bayer and Macri \cite{BM12} proved existence and projectivity of coarse moduli spaces of Bridgeland stable objects on K3 surfaces.
\begin{thm}$($\cite{BM12}, \cite{BM13}$)$\label{moduli}
Let $v \in  \widetilde{H}^{1,1}(S,\mathbb{Z})$ be a primitive Mukai vector with $\langle v,v \rangle \ge -2$. Let $\sigma \in \mathrm{Stab}^*(S)$ be a $v$-generic stability condition.
Then the coarse moduli space $M_{\sigma}(v)$ of $\sigma$-stable objects with Mukai vector $v$ exists as a  hyperK\"ahler manifold of $\text{K3}^{[n]}$-type and $\dim{M_{\sigma}(v)}=2+\langle v,v \rangle$. Note that the coarse moduli space $M_\sigma(v)$ is non-empty.
\end{thm}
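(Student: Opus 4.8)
The plan is to reduce the statement to the classical theory of Gieseker-stable sheaves, for which the conclusion is known, and then to transport that conclusion across the entire wall-and-chamber decomposition of $\stab(S)$ using wall-crossing and a positivity argument.

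\textbf{Step 1: the local structure is independent of $\sigma$.} First I would observe that smoothness, the holomorphic symplectic form and the dimension formula are intrinsic to any moduli of $\sigma$-stable objects and do not depend on the chosen $\sigma$. For a $\sigma$-stable object $E$ one has $\mathrm{Hom}(E,E)=\CC$, and since stable objects of equal phase admit no nonzero maps of nonpositive degree, $\mathrm{Ext}^{<0}(E,E)=0$. Because the Serre functor on $D^b(S)$ is $[2]$, Serre duality gives $\mathrm{Ext}^2(E,E)\cong\mathrm{Hom}(E,E)^\vee=\CC$, and the trace map $\mathrm{Ext}^2(E,E)\to H^2(\mathcal{O}_S)=\CC$ is then an isomorphism. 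Hence the deformations of $E$ are unobstructed, the tangent space is $\mathrm{Ext}^1(E,E)$, and by the identity $-\chi=\langle-,-\rangle$ together with Riemann--Roch its dimension is $1+1-\chi(E,E)=2+\langle v,v\rangle$. The Serre-duality pairing on $\mathrm{Ext}^1(E,E)$ supplies the everywhere non-degenerate two-form, so any coarse moduli space that exists is automatically smooth and holomorphic symplectic of the asserted dimension.

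\textbf{Step 2: the Gieseker chamber.} Next I would invoke the large-volume-limit comparison of Bridgeland and Toda \cite{Bri08}, \cite{Tod08}: for stability conditions in a suitable chamber, $\sigma$-stability coincides with Gieseker stability for an appropriate polarization. On that chamber the classical theory applies, and the moduli space of Gieseker-stable sheaves with primitive $v$ and $v^2\geq-2$ is a non-empty smooth projective \hkmfd\ of $\text{K3}^{[n]}$-type with $2n=v^2+2$ \cite{Muk84}. This settles the theorem on a single chamber, including non-emptiness, projectivity and the deformation type.

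\textbf{Step 3: propagation across walls.} It remains to pass from the Gieseker chamber to an arbitrary $v$-generic $\sigma$. The crucial input is Bayer--Macri's positivity construction \cite{BM12}, \cite{BM13}: to each $v$-generic $\sigma$ one attaches a tautological determinantal divisor class $\ell_\sigma$ on $M_\sigma(v)$ and proves it is ample, which simultaneously yields existence of the coarse moduli space as a projective variety for every generic $\sigma$. Any two chambers in $\stab(S)$ are then joined by a path crossing finitely many walls; by the wall-crossing analysis each crossing induces a birational map between the two holomorphic symplectic moduli spaces. Since birational \hkmfds\ are deformation equivalent \cite{Huy99}, every $M_\sigma(v)$ is deformation equivalent to the Gieseker moduli space and hence of $\text{K3}^{[n]}$-type, while non-emptiness is preserved because such a birational map is an isomorphism away from a closed subset of codimension at least two.

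\textbf{Main obstacle.} The genuinely hard step is Step 3, and specifically the construction of the class $\ell_\sigma$ together with the proof of its ampleness, since there is no GIT presentation of Bridgeland stability to fall back on. One must first establish that the stack of $\sigma$-semistable objects with Mukai vector $v$ is bounded and of finite type, and then prove the delicate estimate that $\ell_\sigma$ pairs strictly positively with every curve class on the moduli space. This positivity estimate is the technical heart of the argument and is where essentially all the difficulty resides; once it is in place, the smoothness and symplectic structure of Step 1 and the deformation-invariance of Step 3 assemble into the full statement.
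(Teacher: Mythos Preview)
The paper does not supply its own proof of this theorem: it is quoted as a background result with attribution to Bayer--Macri \cite{BM12}, \cite{BM13}, and no argument is given in the text. Your outline is a faithful high-level summary of the strategy those papers actually use---Mukai's deformation-theoretic calculation for smoothness, dimension and the symplectic form; reduction to the Gieseker chamber via the large-volume limit; and propagation to arbitrary generic $\sigma$ by the positivity of the determinantal class $\ell_\sigma$ together with wall-crossing and Huybrechts' birational deformation-equivalence---so there is nothing in the present paper to compare against beyond the citation itself.

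One small caution on Step~3: before the ampleness of $\ell_\sigma$ can even be formulated, one needs the coarse moduli space to exist as a proper algebraic space, which in \cite{BM12} is obtained from general results on moduli of complexes (Inaba, Lieblich, Toda) rather than as a consequence of the positivity argument. Your sketch conflates these two ingredients slightly, but the overall architecture is correct.
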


Recall the notion of quasi-universal families on the coarse moduli space $M_\sigma(v)$.

\begin{dfn}$($\cite{BM12}$)$
Let $M$ be a connected algebraic space of finite type over $\mathbb{C}$. 
\begin{itemize}
\item An object $\mathcal{E} \in D^b(M \times S)$ is called a quasi-family of objects in $\mathcal{M}_\sigma(v)(\mathrm{Spec}\mathbb{C})$ if for any closed point $p \in M$, there is an object $E \in \mathcal{M}_\sigma(v)$ and a positive integer $\rho >0$ such that  $\mathcal{E}|_{p \times S} \simeq E^{\oplus \rho}$.  Then $\rho$ is independent to a choice of $p \in M$ and called the similitude of $\mathcal{E}$. 
\item Two quasi-families $\mathcal{E}, \mathcal{E}^{\prime} \in D^b(M \times S)$ are called equivalent if there are vector bundles $V, V^{\prime}$ on $M$ such that $\mathcal{E} \otimes p_{M}^*V \simeq \mathcal{E}^{\prime} \otimes p_{M}^*V^\prime$, 
\item  A quasi-family $\mathcal{E} \in D^b(M \times S)$ is called a quasi-universal family if for any algebraic scheme $M^\prime$ and a quasi-family $\mathcal{E}^\prime \in D^b(M^\prime \times S)$, there is a unique morphism $f : M^\prime \to M$ such that $\mathbf{L}f^* \mathcal{E}$ is equivalent to $\mathcal{E}^\prime$.    
\end{itemize}
\end{dfn}

\begin{rem}$($\cite{BM12}$)$
Let $v \in  \widetilde{H}^{1,1}(S,\mathbb{Z})$ be a primitive Mukai vector with $\langle v,v \rangle \ge -2$. Let $\sigma \in \mathrm{Stab}^*(S)$ be a $v$-generic stability condition. Then the coarse moduli space $M_\sigma(v)$ has a quasi-universal family $\mathcal{E} \in D^b(M_\sigma(v) \times S)$,  unique up to equivalence.
\end{rem}

Using it, we obtain the following Hodge isometry, so called Mukai homomorphism.

\begin{prop}$($\cite{BM12}$)$\label{Mukai iso}
Let $v \in  \widetilde{H}^{1,1}(S,\mathbb{Z})$ be a primitive Mukai vector with $\langle v,v \rangle >0$. Let $\sigma \in \mathrm{Stab}^*(S)$ be a $v$-generic stability condition. Let $\mathcal{E} \in D^b(M_\sigma(v) \times S)$ be a quasi-universal family of similitude $\rho$.  We define a homomorphism  $\theta : v^{\perp} \to H^2(M_\sigma(v),\mathbb{Z})$ as  follow:
\[ \theta_{v}(w) := \frac{1}{\rho}\cdot [\Phi_{\mathcal{E}}^H(w)]_2. \]
Here, $[-]_2$ means the degree two part and we consider the orthogonal complement $v^\perp$ in $H^*(S,\mathbb{Z})$.
Then the homomorphism $\theta_v : v^\perp \to H^2(M_\sigma(v),\mathbb{Z})$ is a Hodge isometry and independent to a choice of $\mathcal{E}$. 

\end{prop}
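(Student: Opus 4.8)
The plan is to verify in turn that $\theta_v$ is independent of the quasi-universal family $\mathcal{E}$, that it takes values in the integral lattice $H^2(M_\sigma(v),\ZZ)$, that it is a morphism of weight-two Hodge structures, and finally that it is an isometry --- the last point carrying all the difficulty. For the first two I would argue as follows. Writing $\Phi_{\mathcal{E}}^H(\alpha)=p_{M*}\big(p_S^*\alpha\cdot v(\mathcal{E})\big)$, I first observe that for $w\in v^{\perp}$ the degree-zero component of $\Phi_{\mathcal{E}}^H(w)$ vanishes: restricting $\mathcal{E}$ to a fibre $\{p\}\times S$ gives $v(\mathcal{E}|_{\{p\}\times S})=\rho\,v$, so this component is proportional to $\langle v,w\rangle=0$. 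Hence $\Phi_{\mathcal{E}}^H(w)$ is concentrated in positive degree, and if $\mathcal{E}'=\mathcal{E}\otimes p_M^*V$ then the projection formula gives $\Phi_{\mathcal{E}'}^H(w)=\mathrm{ch}(V)\cdot\Phi_{\mathcal{E}}^H(w)$, whose degree-two part is simply $\rk(V)\cdot[\Phi_{\mathcal{E}}^H(w)]_2$; since the similitude of $\mathcal{E}'$ is $\rho\cdot\rk(V)$, the ratio $\tfrac1\rho[\Phi_{\mathcal{E}}^H(w)]_2$ is unchanged, and the general equivalence relation among quasi-universal families reduces to this. Integrality is a local statement: as $v$ is primitive a genuine universal family exists \'etale-locally on $M_\sigma(v)$ (the obstruction being a Brauer class that dies \'etale-locally), and there $\theta_v(w)$ is literally $[\Phi_{\mathcal{E}_{\mathrm{loc}}}^H(w)]_2\in H^2(-,\ZZ)$; by the independence just shown these local classes glue.

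The Hodge-theoretic statement should be routine. The class $v(\mathcal{E})=\sum_k v_k$ is algebraic with $v_k\in H^{k,k}$, so $p_S^*$, cup product with $v_k$, and the Gysin map $p_{M*}$ are morphisms of Hodge structures of bidegrees $(0,0)$, $(k,k)$ and $(-2,-2)$. A bidegree count then shows that only the $k=2$ summand contributes to $[\Phi_{\mathcal{E}}^H(w)]_2$ for $w$ of weight two, and that it carries $\widetilde{H}^{p,q}(S)$ into $H^{p,q}(M_\sigma(v))$ when $p+q=2$. Since $v\in\algmukai$ lies in the $(1,1)$-part, $v^{\perp}$ is a weight-two sub-Hodge-structure of $\widetilde{H}(S)$ and $\theta_v$ is a morphism of weight-two Hodge structures; in particular it sends the symplectic class $\widetilde{H}^{2,0}(S)=H^{2,0}(S)$ into $H^{2,0}(M_\sigma(v))$.

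The substantial point is the isometry $q_{M_\sigma(v)}(\theta_v(w))=\langle w,w\rangle$, and I would reduce it to the classical case of Gieseker moduli of sheaves. Using Bayer--Macri's wall-and-chamber analysis of $\stab(S)$ \cite{BM12}, \cite{BM13}, after applying a suitable autoequivalence $\Phi\in\Aut(D^b(S))$ (a product of shifts, line-bundle twists and spherical twists) one may assume $v$ has positive rank and $\sigma$ lies in a chamber in which $M_\sigma(v)$ is isomorphic to --- or, across the finitely many intervening walls, birational to, which suffices since $H^2$ with the Beauville--Bogomolov--Fujiki form is a birational invariant of \hkmfds \ and $\theta$ is cohomological, hence compatible with such birational maps --- a Gieseker moduli space $M_H(\Phi_*v)$ of semistable sheaves on $S$. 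The cohomological action $\Phi^H$ is a Hodge isometry of $\widetilde{H}(S)$ restricting to an isometry $v^{\perp}\xrightarrow{\sim}(\Phi_*v)^{\perp}$ and intertwining $\theta_v$ with $\theta_{\Phi_*v}$, so the claim reduces to the statement that the Mukai morphism on a Gieseker moduli space of sheaves on a K3 surface is a Hodge isometry --- the theorem of Mukai \cite{Muk84}, O'Grady and Yoshioka, which is itself proved by a Grothendieck--Riemann--Roch computation compared with the Fujiki relation $\int_M\theta(w)^{2n}=F_M\,q_M(\theta(w))^n$ (or by specializing to a Hilbert scheme of points, where $\theta$ can be written down explicitly). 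Injectivity of $\theta_v$ is then automatic: an isometric map between the nondegenerate lattices $v^{\perp}$ and $H^2(M_\sigma(v),\ZZ)$ has trivial kernel.

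I expect the main obstacle to be exactly making the reduction of the last paragraph rigorous: one must know that \emph{every} pair $(v,\sigma)$ with $v$ primitive, $v^2>0$ and $\sigma$ a $v$-generic stability condition in $\stab(S)$ can be transported --- by an autoequivalence of $D^b(S)$ together with a controlled path in $\stab(S)$ crossing only walls across which both the birational class of the moduli space and the Mukai morphism are understood --- into a Gieseker chamber. This is precisely the substance of Bayer--Macri's wall-crossing results, and everything else above is essentially bookkeeping with Fourier--Mukai transforms on cohomology.
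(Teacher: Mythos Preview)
The paper does not supply a proof of this proposition: it is stated in the preliminaries as a result quoted from Bayer--Macr\`i \cite{BM12}, with no argument given. There is therefore nothing in the paper to compare your proposal against.

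That said, your sketch is broadly correct and follows the standard route by which the result is established in the literature. The independence-of-$\mathcal{E}$ and Hodge-compatibility arguments are exactly the right bookkeeping, and the reduction of the isometry statement to the classical Gieseker case via an autoequivalence and a path in $\stab(S)$ is precisely how Bayer--Macr\`i prove it (see \cite[Section~7]{BM12}, building on Yoshioka's work). One small caution: in your integrality argument you appeal to an \'etale-local universal family, but the cleaner way --- and the one used in \cite{BM12} --- is to note that since $v$ is primitive there exists $w\in\algmukai$ with $\langle v,w\rangle=1$, and then a twisted universal family exists globally with Brauer twist controlled by $w$; the Mukai morphism is defined via this twisted family and integrality is immediate. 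Your \'etale-local gluing is morally the same but would need a word about why the locally-defined degree-two classes patch to a global integral class rather than merely a rational one.
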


Bayer and Macri \cite{BM12} constructed nef divisors on $M_\sigma(v)$ from stability conditions on $D^b(S)$.
Let $\mathcal{C} \in \mathrm{Stab}^*(S)$ be the chamber containing the stability conditon $\sigma$. Then we write $M_\mathcal{C}(v):=M_\sigma(v)$.
Let $\mathcal{E} \in D^b(M_\sigma(v) \times S)$ be a quasi-universal family of similitude $\rho$.

\begin{thm}$($\cite{BM12}$)$
For a stability condition $\tau=(Z,\mathcal{P}) \in \overline{\mathcal{C}}$, we define a divisor $l_\tau \in \mathrm{NS}(M_\mathcal{C}(v))_{\mathbb{R}}$  as follow:
\[ l_\tau([C]):= \mathrm{Im}\biggl(-\frac{Z(v(\Phi_\mathcal{E}(\mathcal{O}_C))}{Z(v)}\biggr) \in \mathbb{R}. \] 
Here, $[C] \in N_1(M_\mathcal{C}(v))$ is a numerical class of a curve $C$ on $M_\mathcal{C}(v)$.
Then $l_\tau \in \mathrm{NS}(M_\mathcal{C}(v))$ is a well-defined nef divisor. If $\tau$ is $v$-generic, then we have $l_\tau \in \mathrm{Amp}(M_\mathcal{C}(v))$.
\end{thm}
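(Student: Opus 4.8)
The plan is to realise $l_\tau$ as the image, under the Mukai isometry $\theta_v$, of an explicit real class on $S$ --- which makes it well defined and computes its square --- then to deduce nefness from the see-saw positivity mechanism of Bayer--Macr\`i applied to the Fourier--Mukai image of a curve, and finally to upgrade nefness to ampleness in the $v$-generic case by analysing the equality case together with the cone structure of hyperK\"ahler manifolds.

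First I would identify $l_\tau$ with a Mukai class. Write $Z(-)=\langle\Omega,-\rangle$ with $\Omega=\pi(\tau)\in\mathcal{P}^{+}_{0}(S)$ and put $\Omega/Z(v)=\alpha+i\beta$ with $\alpha,\beta\in\widetilde H^{1,1}(S,\mathbb{R})$; pairing with $v$ gives $\langle\alpha,v\rangle=1$ and $\langle\beta,v\rangle=0$, so $\beta\in v^{\perp}\otimes\mathbb{R}$. By the standard compatibility of the cohomological Fourier--Mukai transform attached to $\mathcal{E}$ with the Mukai pairings on $S$ and on $M_{\mathcal{C}}(v)$ (the very identity underlying the definition of $\theta_v$), and since $v(\mathcal{O}_C)$ has vanishing rank so that only the degree-two component of $\Phi_{\mathcal{E}}^{H}$ contributes,
\[ l_\tau([C])=\operatorname{Im}\!\left(-\frac{Z(v(\Phi_{\mathcal{E}}(\mathcal{O}_C)))}{Z(v)}\right)=-\langle\beta,\,v(\Phi_{\mathcal{E}}(\mathcal{O}_C))\rangle=\theta_v(w_\tau)\cdot[C] \]
for a fixed class $w_\tau\in v^{\perp}\otimes\mathbb{R}$ which is a nonzero multiple of $\beta$. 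Thus $l_\tau=\theta_v(w_\tau)$ is a genuine numerical divisor class in $\mathrm{NS}(M_{\mathcal{C}}(v))_{\mathbb{R}}$, independent of the quasi-universal family (another choice differs by $\otimes\,p_M^{*}V$, and the rescaling by $\operatorname{rk}V$ is absorbed by the normalisation $1/\rho$); and since $\theta_v$ is an isometry, $q_M(l_\tau)$ is a positive multiple of $\langle\beta,\beta\rangle>0$, positivity holding because $\operatorname{Re}(\Omega/Z(v))$ and $\operatorname{Im}(\Omega/Z(v))$ span the same positive definite plane as $\operatorname{Re}\Omega,\operatorname{Im}\Omega$.

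Next, nefness. It suffices to prove $l_\tau\cdot C\ge 0$ for every integral curve $C\subset M_{\mathcal{C}}(v)$, and, replacing $C$ by its normalisation and $\mathcal{E}$ by its pullback, we may assume $C$ smooth. Put $F:=\Phi_{\mathcal{E}}(\mathcal{O}_C)=\mathbf{R}p_{S*}\!\big(\mathcal{E}|_{C\times S}\big)\in D^b(S)$, where $p_S\colon C\times S\to S$ is the projection; for each $c\in C$ the restriction $\mathcal{E}|_{\{c\}\times S}\simeq E_c^{\oplus\rho}$ with $E_c$ $\sigma$-stable of Mukai vector $v$ for $\sigma$ in the open chamber $\mathcal{C}$, hence each $E_c$ is $\tau$-semistable (for $\tau\in\overline{\mathcal{C}}$) of one common phase $\phi_0$. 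The heart of the matter --- this is the Bayer--Macr\`i positivity (see-saw) lemma --- is that, using the relative heart over $C$ with fibre $\mathcal{P}((\phi_0-1,\phi_0])$, the object $F$ sits in a triangle $\mathcal{H}^0\to F\to\mathcal{H}^1[-1]$ with $\mathcal{H}^0\in\mathcal{P}((\phi_0-1,\phi_0])$ and, crucially, $\mathcal{H}^1\in\mathcal{P}(\phi_0)$ (possibly zero), the extremality of $\phi_0$ in the window together with relative Serre duality along $C$ and the semistability of the fibres forcing the top piece to be semistable of phase exactly $\phi_0$. Then $\operatorname{Im}(e^{-i\pi\phi_0}Z(\mathcal{H}^1))=0$ and $\operatorname{Im}(e^{-i\pi\phi_0}Z(\mathcal{H}^0))\le 0$, so $\operatorname{Im}(e^{-i\pi\phi_0}Z(F))\le 0$, which after dividing by $Z(v)\in\mathbb{R}_{>0}e^{i\pi\phi_0}$ is exactly $l_\tau\cdot C=\operatorname{Im}(-Z(F)/Z(v))\ge 0$.

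Finally, ampleness for $v$-generic $\tau$. Tracing the equality case, $l_\tau\cdot C=0$ forces $\mathcal{H}^1$ to account for all of $Z(F)$, which by the see-saw analysis means the $\tau$-Jordan--H\"older factors of $E_c$ are independent of $c$, i.e.\ all the $E_c$ are $\tau$-S-equivalent; but for $v$-generic $\tau$ there are no strictly $\tau$-semistable objects of class $v$, so $\tau$-S-equivalence is isomorphism and $C\to M_{\mathcal{C}}(v)$ is constant, a contradiction. Hence $l_\tau\cdot C>0$ for every curve $C$. Since $l_\tau$ is nef it lies in $\overline{\mathrm{Pos}}(M_{\mathcal{C}}(v))$, and $q_M(l_\tau)>0$ then puts it in $\mathrm{Pos}(M_{\mathcal{C}}(v))$; by the standard description of the ample cone of a projective hyperK\"ahler manifold as the set of positive-cone classes positive on every curve, $l_\tau\in\mathrm{Amp}(M_{\mathcal{C}}(v))$. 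The step I expect to be the real obstacle is the positivity lemma in the nefness argument: the assertion that $\Phi_{\mathcal{E}}(\mathcal{O}_C)$ has the displayed two-term shape with $\mathcal{H}^1$ semistable of the extremal phase $\phi_0$ is exactly what produces the one-sided bound on $\operatorname{Im}Z$, while the identification with $\theta_v(w_\tau)$, the reduction to smooth curves, and the passage from nef to ample are all formal given the results already recalled.
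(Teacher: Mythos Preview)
The paper does not give its own proof of this theorem: it is quoted from \cite{BM12} as a preliminary result, with no argument supplied. There is therefore nothing in the paper to compare your proposal against.

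That said, your sketch is a faithful outline of the original Bayer--Macr\`i argument. The identification $l_\tau=\theta_v(w_\tau)$ via $\Omega/Z(v)=\alpha+i\beta$ is exactly how they show $l_\tau$ is a well-defined numerical class, and the passage from nef-and-positive-on-every-curve to ample is standard for hyperK\"ahler manifolds. The one point where your description is slightly off is the mechanism behind the positivity lemma: Bayer--Macr\`i do not work with a two-term decomposition $\mathcal{H}^0\to F\to\mathcal{H}^1[-1]$ of $F=\Phi_{\mathcal{E}}(\mathcal{O}_C)$ directly, but rather use the Abramovich--Polishchuk construction of a constant family of t-structures (or slicings) over $C$, take the HN filtration of $\mathcal{E}|_{C\times S}$ in that relative sense, and then push each HN factor forward to $S$. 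The conclusion --- that the phase of every HN factor is at most $\phi_0$, with equality forcing S-equivalence of the fibres --- is the same as what you wrote, but the route goes through the relative HN filtration rather than through a cohomology-object splitting of the pushforward. You correctly identify this step as the substantive one; the rest is formal.
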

Let $\sigma_0 \in \mathrm{Stab}^*(S)$ be a generic stability condition on a wall $\mathcal{W}$. Let $\sigma_+$ and $\sigma_-$ be $v$-generic stability conditions nearby $\mathcal{W}$ in opposite chambers. 
\begin{thm}$($\cite{BM12}$)$
The divisors $\l_{\sigma_0, \pm} \in \mathrm{NS}(M_{\sigma_\pm}(v))$ are nef and big.  Moreover, they induce birational contractions $\pi^\pm : M_{\sigma_\pm}(v) \to \overline{M}_\pm$ contracting objects which are S-equivalent each other with respect to $\sigma_0$.
\end{thm}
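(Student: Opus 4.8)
The plan is to deduce the statement from the preceding results on the divisor classes $l_\tau$, $\tau \in \overline{\mathcal{C}}$, the Mukai isometry $\theta_v$ of Proposition \ref{Mukai iso}, and Bayer--Macri's construction of moduli of $\sigma_0$-semistable objects. First, nefness: a generic point $\sigma_0$ of the wall $\mathcal{W}$ lies in the common boundary $\overline{\mathcal{C}_+} \cap \overline{\mathcal{C}_-}$ of the two adjacent chambers, so applying the preceding theorem on each side shows at once that $l_{\sigma_0, \pm}$ is a well-defined nef divisor on $M_{\sigma_\pm}(v)$.

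Next I would prove bigness. Write the central charge of $\sigma_0$ as $Z_0 = \langle \Omega, -\rangle$ with $\Omega \in \Halg \otimes \CC$; since $\pi(\sigma_0) \in \mathcal{P}^+_0(S)$, the real and imaginary parts of $\Omega$ span a positive-definite plane $P \subset \Halg \otimes \RR$. Unwinding the defining formula
\[ l_{\sigma_0,\pm}([C]) = \mathrm{Im}\Bigl(-\frac{Z_0(v(\Phi_{\mathcal{E}_\pm}(\mathcal{O}_C)))}{Z_0(v)}\Bigr) \]
identifies $\theta_v^{-1}(l_{\sigma_0,\pm})$, up to a positive scalar, with the real class $i\bigl(\langle \overline{\Omega}, v\rangle\, \Omega - \langle \Omega, v\rangle\, \overline{\Omega}\bigr) \in v^\perp \cap P$. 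This class is nonzero: $Z_0(v) = \langle\Omega, v\rangle \neq 0$, since otherwise $v$ --- being real --- would lie in the negative-definite subspace $P^\perp$, contradicting $v^2 > 0$, and $\Omega$ is not proportional to $\overline{\Omega}$. Since $P$ is positive definite and $\theta_v$ is an isometry, $q_{M_{\sigma_\pm}}(l_{\sigma_0,\pm}) > 0$; and a nef divisor of positive Beauville--Bogomolov--Fujiki square on a \hkmfd \ is big, its top self-intersection $\int l^{2n} = F_M\, q_M(l)^n$ being then positive.

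For the contraction I would take $\overline{M}_\pm := M_{\sigma_0}(v)$, the moduli space of $\sigma_0$-semistable objects of Mukai vector $v$ modulo S-equivalence, which Bayer--Macri construct as a normal projective variety. Sending a $\sigma_\pm$-stable object $E$ to the S-equivalence class of the direct sum of its $\sigma_0$-Jordan--H\"older factors defines a set-theoretic map $\pi^\pm : M_{\sigma_\pm}(v) \to \overline{M}_\pm$; since all the objects in play lie in the single abelian category $\mathcal{P}_{\sigma_0}((\phi-1,\phi])$, form a bounded family, and $\sigma_0$-semistability is a closed condition, this is a morphism of schemes, and the definition of $l_\tau$ yields $(\pi^\pm)^* A = l_{\sigma_0,\pm}$ up to a positive scalar for an ample class $A$ on $\overline{M}_\pm$. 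Hence $l_{\sigma_0,\pm}$ is semiample and $\pi^\pm$ is the associated contraction. The Positivity Lemma of Bayer--Macri (in its refined form) shows that $l_{\sigma_0,\pm}\cdot C = 0$ holds exactly when the objects over a general point of a curve $C$ have the same $\sigma_0$-phase as $v$ and are S-equivalent with respect to $\sigma_0$, so $\pi^\pm$ contracts precisely such curves; and since $l_{\sigma_0,\pm}$ is big, $\pi^\pm$ is generically finite with connected fibres, hence birational onto its image.

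The hard part is the semiampleness step: showing that ``pass to $\sigma_0$-Jordan--H\"older factors'' really is a projective morphism $M_{\sigma_\pm}(v) \to \overline{M}_\pm$, equivalently that the nef and big divisor $l_{\sigma_0,\pm}$ is semiample. This rests on Bayer--Macri's construction of Bridgeland moduli spaces of semistable objects as projective schemes --- via Abramovich--Polishchuk's gluing of $t$-structures together with a GIT (or large-volume) comparison with Gieseker moduli --- and on checking that the Jordan--H\"older filtration varies algebraically in families. By contrast, once this moduli-theoretic input is available, the Positivity Lemma and the lattice computation behind bigness are routine.
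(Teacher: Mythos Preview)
The paper does not provide its own proof of this theorem: it is stated in the preliminary section as a result quoted from \cite{BM12}, with no argument given. There is therefore nothing in the present paper to compare your proposal against. Your sketch is a reasonable outline of the original Bayer--Macri argument --- nefness from the Positivity Lemma, bigness from the lattice computation $q_M(l_{\sigma_0,\pm})>0$, and the contraction realised by passing to the moduli of $\sigma_0$-semistable objects modulo S-equivalence --- but that is a reconstruction of \cite{BM12}, not of anything in this paper.
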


We recall the classification of walls.
\begin{dfn}$($\cite{BM12}, \cite{BM13}$)$\label{walldef}
We call a wall $\mathcal{W}$:
\begin{itemize}
\item a fake wall, if $\pi_\pm  : M_{\sigma_\pm}(v) \to \overline{M}_\pm$ is an isomorphism.  
\item a totally semistable wall, if $M_{\sigma_0}^\mathrm{st}(v)=\emptyset$..
\item a flopping wall, if we can identify $\overline{M}_{+}=\overline{M}_{-}$ and the $\pi^{\pm}$ induce flopping contractions.
\item a divisorial wall, if the morphisms $\pi^{\pm}: M_{\sigma_{\pm}}(v) \to \overline{M}_{\pm}$ are both divisorial contractions
\end{itemize}
\end{dfn}

Bayer and Macri \cite{BM13} classified walls in term of Mukai lattices.
\begin{dfn}$($\cite{BM13}$)$
For a wall $\mathcal{W} \subset \mathrm{Stab}^*(S)$, we define a sublattice $\mathcal{H}_{\mathcal{W}} \subset \widetilde{H}^{1,1}(S,\mathbb{Z})$ as follow:
\[ \mathcal{H}_{\mathcal{W}} := \bigl\{ w \in \widetilde{H}^{1,1}(S,\mathbb{Z}) \mid \mathrm{Im}\frac{Z(w)}{Z(v)}=0 \ \text{for all}\  (Z,\mathcal{P}) \in \mathcal{W} \bigr\}. \] 
Then $\mathcal{H}_{\mathcal{W}}$ is a rank two hyperbolic lattice containing the Mukai vector $v$.
\end{dfn}

\begin{dfn}$($\cite{BM13}$)$
Let $\mathcal{H} \subset \widetilde{H}^{1,1}(S,\mathbb{Z})$ be a hyperbolic sublattce. A class $w \in \mathcal{H}$ is called positive if $w^2 \geq 0$ and  $\langle v,w \rangle >0$ hold. 
\end{dfn}

The following is the classification of walls.

\begin{thm}$($\cite{BM13}$)$\label{wall}
Let $\mathcal{W} \subset \mathrm{Stab}^*(S)$ be a wall. 
\begin{itemize}
\item[(a)] The wall $\mathcal{W}$ is a divisorial wall if one of the three conditions hold:

\bf{(Brill-Noether):} \it there exists a $(-2)$-class $a \in \mathcal{H}_{\mathcal{W}}$ with $\langle a, v \rangle =0$.

\bf{(Hilbert-Chow):} \it there exists an isotropic class $a \in \mathcal{H}_{\mathcal{W}}$ with $\langle a, v \rangle =1$.

\bf{(Li-Gieseker-Uhlenbeck):} \it there exists an isotropic class $a \in \mathcal{H}_{\mathcal{W}}$ with $\langle a, v \rangle =2$.
\item[(b)] Otherwise, if there are positive classes $a, b \in \mathcal{H}_{\mathcal{W}}$  such that $v=a+b$, or if there is a $(-2)$-class $a \in \mathcal{H}_{\mathcal{W}}$ with $0<\langle a, v \rangle \leq v^2/2$, then the wall $\mathcal{W}$ is a flopping wall.  

\item[(c)] Assume that (a) or (b) does not hold. Then the wall $\mathcal{W}$ is a fake wall.
\end{itemize}
\end{thm}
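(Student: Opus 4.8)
The plan is to read off the type of the birational contractions $\pi^\pm : M_{\sigma_\pm}(v) \to \overline{M}_\pm$ recalled above from the combinatorics of Jordan--Hölder filtrations on the wall, encoded in the rank two hyperbolic lattice $\mathcal{H}_\mathcal{W}$. Since $\pi^\pm$ contracts exactly the loci of objects that are $S$-equivalent with respect to $\sigma_0$, a positive-dimensional fibre corresponds to a family of objects with Mukai vector $v$ that become strictly $\sigma_0$-semistable with all Jordan--Hölder factors of the same $\sigma_0$-phase; the Mukai vectors of those factors then lie in $\mathcal{H}_\mathcal{W}$, and they are among the classes $a \in \mathcal{H}_\mathcal{W}$ with $a^2 \ge -2$ that are minimal for the natural partial order. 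So the first step is to describe the finite-length abelian category of $\sigma_0$-semistable objects of a fixed phase --- its simple objects, their Mukai vectors, and the $\mathrm{Ext}^1$-groups between them --- which reduces the problem to a combinatorial one about decompositions $v = \sum_i m_i a_i$ into such classes and the dimensions of the corresponding strata in $M_{\sigma_\pm}(v)$.

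Next I would carry out the reduction to a minimal Mukai vector. If $\mathcal{H}_\mathcal{W}$ contains the class $a$ of a $\sigma_0$-stable spherical object $A$ with $\langle a, v \rangle < 0$ (the totally semistable case), the autoequivalence induced by $\mathrm{ST}_A^{\pm 1}$ preserves $\mathcal{W}$, acts isometrically on $\mathcal{H}_\mathcal{W}$, and replaces $v$ by a class lying closer to the positive cone; iterating finitely often, one reaches a class for which no such negative spherical class exists, so that a generic object on the wall is genuinely $\sigma_0$-stable --- unless the class is itself spherical or isotropic, which is handled separately. In this minimal situation I would run the case analysis of part (a): if there is an isotropic $w \in \mathcal{H}_\mathcal{W}$ with $\langle w, v \rangle = 1$ (resp.\ $= 2$), one identifies $\overline{M}_\pm$ with a moduli space of sheaves and $\pi^\pm$ with the Hilbert--Chow (resp.\ Li--Gieseker--Uhlenbeck) morphism, whose exceptional locus is a divisor; if there is a $(-2)$-class $a \in \mathcal{H}_\mathcal{W}$ with $\langle a, v \rangle = 0$, the contracted divisor is the Brill--Noether locus of objects $E$ with $\mathrm{Hom}(A,E) \ne 0$, where $A$ is the stable spherical object with $v(A) = a$. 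In each case a dimension count using $\dim M_\sigma(v) = v^2 + 2$ confirms that the contracted locus has codimension exactly one.

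The remaining cases give flops or isomorphisms. If $v = a + b$ with $a,b$ positive classes of $\mathcal{H}_\mathcal{W}$, or if there is a $(-2)$-class $a$ with $0 < \langle a, v \rangle \le v^2/2$, then the stratum of objects whose $\sigma_0$-Jordan--Hölder factors realize that decomposition is non-empty and, by the dimension count, has codimension at least two, while carrying a fibration by products of smaller moduli spaces, respectively by projective spaces $\mathbb{P}^{\langle a, v \rangle - 1}$ exhibiting it as an $\mathrm{Ext}$-bundle over a lower-dimensional moduli space; hence $\pi^\pm$ are small and the wall is flopping. If none of the conditions in (a) or (b) holds, one shows that no object with Mukai vector $v$ can become strictly $\sigma_0$-semistable with factors of equal phase, so there is no contracted curve and $\pi^\pm$ is an isomorphism, i.e.\ $\mathcal{W}$ is fake. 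The main obstacle is the interaction of the last two steps: proving that the chain of spherical twists terminates at a minimal class and matches the wall-and-chamber structures, and then establishing sharp dimension estimates for every Jordan--Hölder stratum --- this is where the numerical thresholds $\langle a, v \rangle \le v^2/2$ and the splitting of $v$ into two positive classes arise, as the exact boundary between divisorial, flopping and fake behaviour.
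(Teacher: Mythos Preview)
The paper does not prove this theorem at all: it is stated with the citation \cite{BM13} and no proof is given, since it is quoted verbatim from Bayer--Macr\`i's classification of walls. So there is no ``paper's own proof'' to compare your proposal against.

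That said, your sketch is a faithful outline of the strategy Bayer and Macr\`i actually use in \cite{BM13}: analyzing the rank-two hyperbolic lattice $\mathcal{H}_{\mathcal{W}}$, reducing via spherical twists to a minimal class, and then computing the dimension of each Jordan--H\"older stratum to decide whether the contraction is divisorial, small, or an isomorphism. The identifications you name (Hilbert--Chow, Li--Gieseker--Uhlenbeck, Brill--Noether) and the numerical thresholds are exactly the ones that appear there. The ``main obstacle'' you flag --- terminating the spherical-twist reduction and controlling all strata simultaneously --- is indeed the technical heart of their Sections~6--9, and occupies a substantial portion of that paper; your paragraph is an accurate high-level summary but of course does not constitute a proof on its own. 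For the purposes of the present paper, however, no proof is expected: the result is used as a black box.
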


We can deduce the following description of the nef cone $\mathrm{\overline{Amp}}(M_\sigma(v))$ from Theorem \ref{wall}.

\begin{thm}$($\cite{BM13}, \rm{Theorem 12.1}$)$\label{nefcone} \it{}
Consider the chamber decomposition of the positive cone $\overline{\mathrm{Pos}}(M_\sigma(v))$ whose walls are given by linear subspaces of the form
\[ \theta_{v}(v^\perp \cap a^\perp) \]
for all $a \in  \widetilde{H}^{1,1}(S,\mathbb{Z})$ such that $-2 \leq a^2 <v^2/4$ and $0 \leq \langle v, a \rangle \leq v^2/2$.  Then the nef cone $\overline{\mathrm{Amp}}(M_\sigma(v))$ is one of the chambers of above chamber decomposition.
\end{thm}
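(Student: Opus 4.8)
\begin{plan}
The approach is to read off the nef cone of $M_\sigma(v)$ from the wall-and-chamber structure on $\stab(S)$ through the map $\tau\mapsto \ell_\tau$ sending stability conditions to nef divisors, and then to translate the resulting geometric walls into the stated numerical form by means of Bayer and Macri's classification (Theorem \ref{wall}). First I would fix the chamber $\mathcal{C}\subset\stab(S)$ containing $\sigma$, so that $M_\mathcal{C}(v)=M_\sigma(v)$, and consider the continuous map $\ell:\overline{\mathcal{C}}\to\overline{\mathrm{Amp}}(M_\sigma(v))$ coming from the positivity theorems recalled above, which sends $v$-generic stability conditions into the open ample cone. The key preliminary step is to show $\ell$ is surjective onto the nef cone up to scaling: every nef class on $M_\sigma(v)$ is proportional to $\ell_\tau$ for some $\tau\in\overline{\mathcal{C}}$. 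I would deduce this from the covering $\pi:\stab(S)\to\mathcal{P}^+_0(S)$ together with the explicit shape of $\ell_\tau$, which (unwinding its definition via the Mukai homomorphism) is a fixed $\RR_{>0}$-multiple of the $\theta_v$-image of the projection to $v^\perp$ of a real-linear combination of the real and imaginary parts of $\Omega_\tau/Z_\tau(v)$; as $\tau$ ranges over $\overline{\mathcal{C}}$ the period $\Omega_\tau$ ranges over the closure of a chamber in $\mathcal{P}^+_0(S)$ whose projection to $v^\perp\otimes\RR$ sweeps out the positive cone, and openness of $\ell$ on $\mathcal{C}$ plus a boundary-limit argument then give surjectivity.

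Next I would match the faces of $\overline{\mathrm{Amp}}(M_\sigma(v))$ with the non-fake walls of $\stab(S)$ adjacent to $\mathcal{C}$. If $\mathcal{W}=\partial\mathcal{C}$ is a wall with generic point $\sigma_0$, the quoted theorems say $\ell_{\sigma_0}$ is nef and big and induces a birational contraction $\pi:M_\sigma(v)\to\overline{M}$. When $\mathcal{W}$ is fake, $\pi$ is an isomorphism, so $\ell_{\sigma_0}$ is actually ample and the image $\ell(\mathcal{C})$ extends past $\mathcal{W}$; hence fake walls contribute no boundary face. When $\mathcal{W}$ is flopping or divisorial, $\pi$ contracts a curve class $[C]$, so $\ell_{\sigma_0}\cdot[C]=0$ and $\ell_{\sigma_0}$ lies on the hyperplane $\theta_v(v^\perp\cap\mathcal{H}_\mathcal{W}^\perp)$, which is therefore a supporting hyperplane of the nef cone. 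Combined with the surjectivity of $\ell$, this identifies $\overline{\mathrm{Amp}}(M_\sigma(v))$ with the closure of the connected component of $\overline{\mathrm{Pos}}(M_\sigma(v))$ minus the union of the hyperplanes $\theta_v(v^\perp\cap\mathcal{H}_\mathcal{W}^\perp)$ over flopping and divisorial walls $\mathcal{W}$ reachable from $\mathcal{C}$, namely the component containing $\ell(\mathcal{C})$.

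Finally comes the lattice bookkeeping. By Theorem \ref{wall}, a flopping or divisorial wall $\mathcal{W}$ produces, inside the rank-two hyperbolic lattice $\mathcal{H}_\mathcal{W}\ni v$, a class of one of the four listed types; writing a general element of $\mathcal{H}_\mathcal{W}$ as $pv+q a_\mathcal{W}$ with $a_\mathcal{W}$ spanning $v^\perp\cap\mathcal{H}_\mathcal{W}$ and using $v^2>0$, an elementary computation in a rank-two hyperbolic lattice yields a representative $a\in\Halg$ with $-2\le a^2<v^2/4$ and $0\le\langle v,a\rangle\le v^2/2$ for which $v^\perp\cap a^\perp=v^\perp\cap\mathcal{H}_\mathcal{W}^\perp$, so the wall hyperplane is exactly $\theta_v(v^\perp\cap a^\perp)$; conversely, any $a$ in this numerical range either defines a realized wall or has hyperplane disjoint from the nef cone. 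Therefore enlarging to the chamber decomposition cut out by \emph{all} the hyperplanes $\theta_v(v^\perp\cap a^\perp)$ with $a$ as in the statement only refines the decomposition found in the previous paragraph, and $\overline{\mathrm{Amp}}(M_\sigma(v))$ is one of its chambers.

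The principal obstacle is the surjectivity of $\ell$ together with pinning down precisely the bound $a^2<v^2/4$ rather than a weaker inequality. Totally semistable walls, where $M^{\mathrm{st}}_{\sigma_0}(v)=\emptyset$, do not directly admit the naive wall-crossing analysis, and to handle them one should first apply a Fourier--Mukai autoequivalence of $D^b(S)$ reducing the destabilizing class to the Mukai vector of an honest $\sigma_0$-stable object; it is the non-negativity of the squares of the sub- and quotient-classes appearing in that reduction (together with effectivity) that forces the sharp constraint $-2\le a^2<v^2/4$. Making this reduction and the accompanying estimates precise is the technical heart of the proof.
\end{plan}
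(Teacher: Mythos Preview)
The paper does not prove this theorem; it is quoted verbatim as Theorem 12.1 of \cite{BM13}, and the only proof-related content in the paper is the brief Remark immediately following the statement, which notes that the upper bound $a^2<v^2/4$ is forced by the hyperbolicity of the rank-two sublattice $\mathcal{H}=\langle v,a\rangle$. There is therefore no in-paper argument to compare your plan against.

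That said, your outline is a reasonable summary of the strategy actually used in \cite{BM13}: one transports the wall-and-chamber structure on $\stab(S)$ to $\overline{\mathrm{Pos}}(M_\sigma(v))$ via the map $\tau\mapsto\ell_\tau$, shows that only non-fake walls produce facets of the nef cone, and then converts the classification of walls (Theorem \ref{wall} here) into the stated numerical constraints on $a$. Your identification of the two delicate points---surjectivity of $\ell$ up to scaling and the sharp bound $a^2<v^2/4$---is accurate; the latter is exactly what the paper's Remark singles out, and your explanation (hyperbolicity of $\mathcal{H}_\mathcal{W}$ forces the discriminant condition $\langle v,a\rangle^2-v^2a^2>0$, hence $a^2<\langle v,a\rangle^2/v^2\le v^2/4$) matches it.
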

\begin{rem}
In Theorem \ref{nefcone}, the upper bound $a^2<v^2/4$ comes from the hyperbolicity of the lattice $\mathcal{H}:= \langle v, a \rangle$.
\end{rem}

\begin{thm}$($\cite{BM13}$)$
Let $\mathcal{W} \subset \mathrm{Stab}^*(S)$ be a wall. Let $\sigma_0 \in \mathcal{W}$ be a generic stability condition. Take stability conditions $\sigma_\pm$ nearby $\mathcal{W}$ in opposite chambers.  Then there is a possibly contravariant  autoequivalence $\Phi : D^b(S) \to D^b(S)$ and a common open subset $U \subset M_{\sigma_\pm}(v)$ such that for any $u \in U$, the corresponding objects $E_u \in M_{\sigma_{+}}(v)$ and $F_u \in M_{\sigma_-}(v)$ are related via $F_u = \Phi(E_u)$. If  $\mathcal{W}$ is a fake wall, then we can take an open set $U$ as $M_{\sigma_+}(v), M_{\sigma_-}(v)$ respectively and $\Phi \in \mathrm{Aut}^0(D^b(S))$.
\end{thm}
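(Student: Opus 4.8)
The plan is to split the argument into two cases according to whether $\mathcal{W}$ is a totally semistable wall, and in the nontrivial case to construct $\Phi$ out of spherical twists (together with a Fourier--Mukai equivalence to an auxiliary K3 surface when an isotropic class intervenes), following the wall-crossing machinery of Bayer--Macri. If $\mathcal{W}$ is \emph{not} totally semistable, then $M_{\sigma_0}^{\mathrm{st}}(v)\neq\emptyset$, and by openness of stability in $\mathrm{Stab}^*(S)$ any $\sigma_0$-stable object with Mukai vector $v$ stays $\sigma_+$- and $\sigma_-$-stable once $\sigma_\pm$ are taken close enough to $\mathcal{W}$. Let $U$ be the common open dense locus of such objects inside $M_{\sigma_\pm}(v)$; for $u\in U$ the objects $E_u$ and $F_u$ are literally the same object of $D^b(S)$, so $\Phi=\mathrm{id}$ works.

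Now assume $\mathcal{W}$ is totally semistable. The first step is to analyse the rank-two hyperbolic lattice $\mathcal{H}_\mathcal{W}$: a generic $\sigma_0$-semistable object with Mukai vector $v$ has a Jordan--H\"older filtration whose $\sigma_0$-stable factors have Mukai vectors in the positive cone of $\mathcal{H}_\mathcal{W}$, and the relevant primitive classes there are either spherical $(-2)$-classes or isotropic classes. Following Bayer--Macri, one shows that $\mathcal{H}_\mathcal{W}$ contains a spherical class $s$ represented by an honest $\sigma_0$-stable spherical object $S_0\in D^b(S)$ --- possibly only after first applying a Fourier--Mukai equivalence $D^b(S)\xrightarrow{\sim}D^b(S')$ attached to an isotropic class $w\in\mathcal{H}_\mathcal{W}$ (that is, $S'=M_{\sigma_0}(w)$, a possibly twisted K3), which is where the ``possibly contravariant'' clause enters, via the derived dual. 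One then takes $\Phi$ to be a suitable word in $\mathrm{ST}_{S_0}^{\pm1}$ and shifts, arranged so that $\Phi^H(v)=v$ and so that $\Phi$ carries the chamber of $\sigma_+$ to the chamber of $\sigma_-$ in a neighbourhood of $\mathcal{W}$.

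The key verification is that $\Phi$ sends a $\sigma_+$-stable object $E_u$ with $v(E_u)=v$ to a $\sigma_-$-stable one. Every such $E_u$ fits into functorial exact triangles built from $S_0$ and its shift --- its Harder--Narasimhan/Jordan--H\"older ``shape'' relative to $\mathcal{W}$ --- and $\Phi$ is designed precisely to convert this filtration into the one adapted to $\sigma_-$; applying $\Phi$ to the triangle and checking $\sigma_-$-semistability of the outcome, using that $\Phi^H$ is an isometry preserving $\mathcal{H}_\mathcal{W}$ and fixing $v$, yields a $\sigma_-$-stable object. Letting $U$ be the locus of objects whose filtration has the generic shape gives the common open subset with $F_u=\Phi(E_u)$.

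If in addition $\mathcal{W}$ is a fake wall, the contractions $\pi^\pm:M_{\sigma_\pm}(v)\to\overline{M}_\pm$ are isomorphisms with $\overline{M}_+=\overline{M}_-$, so there is no exceptional locus: the generic filtration shape above is the shape of \emph{every} $\sigma_+$-stable object with Mukai vector $v$, hence $\Phi$ maps all of $M_{\sigma_+}(v)$ isomorphically onto $M_{\sigma_-}(v)$ and one may take $U=M_{\sigma_\pm}(v)$. To see $\Phi\in\mathrm{Aut}^0(D^b(S))$, note $\Phi^H(v)=v$, so $\Phi^H$ preserves $v^\perp$; by functoriality of the Mukai homomorphism $\theta_v$, the restriction $\Phi^H|_{v^\perp}$ corresponds to $\phi^*$ on $H^2(M_\sigma(v),\mathbb{Z})$, and since $\phi$ is the tautological isomorphism identifying $M_{\sigma_+}(v)$ and $M_{\sigma_-}(v)$ through their common contraction, it acts as the identity on $H^2$; hence $\Phi^H$ fixes $v^\perp$ pointwise. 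An isometry of the unimodular Mukai lattice fixing $v$ and fixing $v^\perp$ pointwise fixes the finite-index sublattice $\mathbb{Z}v\oplus v^\perp$, so $\Phi^H=\mathrm{id}$ and $\Phi\in\mathrm{Aut}^0(D^b(S))$. The main obstacle is the totally semistable case: both the structural statement pinning down $\mathcal{H}_\mathcal{W}$ (existence of a $\sigma_0$-stable spherical object, or reduction to it via an isotropic class) and the construction of $\Phi$ together with the proof that it genuinely exchanges $\sigma_+$- and $\sigma_-$-stable objects; in a self-contained account this is essentially the whole of the Bayer--Macri wall-crossing analysis and would dominate the argument.
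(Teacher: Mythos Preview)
The paper does not supply its own proof of this theorem: it is stated in the preliminary Section~2.4 as a result quoted from \cite{BM13}, with no accompanying argument. So there is nothing in the paper to compare your proposal against; you are in effect sketching the Bayer--Macri proof itself.

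As a sketch of \cite{BM13} your outline is broadly on target (the dichotomy totally semistable versus not, the role of spherical twists and of an auxiliary Fourier--Mukai partner attached to an isotropic class), but the final paragraph contains a genuine gap. You argue that for a fake wall the induced isomorphism $\phi:M_{\sigma_+}(v)\to M_{\sigma_-}(v)$ is ``the tautological isomorphism identifying $M_{\sigma_+}(v)$ and $M_{\sigma_-}(v)$ through their common contraction'', i.e.\ $(\pi^-)^{-1}\circ\pi^+$, and hence acts trivially on $H^2$. But $\phi$ is defined by $[E]\mapsto[\Phi(E)]$, and nothing you have said forces this to agree with $(\pi^-)^{-1}\circ\pi^+$; that would require knowing that $\Phi(E_u)$ and $E_u$ are $S$-equivalent with respect to $\sigma_0$ for every $u$, which is an extra verification. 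In \cite{BM13} the conclusion $\Phi\in\mathrm{Aut}^0(D^b(S))$ is obtained more directly from the explicit shape of $\Phi$: in the fake totally semistable case $\Phi$ is a composition of spherical twists whose cohomological reflections cancel (the relevant spherical objects share the same Mukai vector, or the word in twists is visibly trivial on $H^*$), so $\Phi^H=\mathrm{id}$ by inspection rather than by an a posteriori argument through $H^2$ of the moduli space. Your route is not wrong in spirit, but as written it assumes the very compatibility it needs to establish.
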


\subsection{Topological entropy}
In this subsection, we recall the notion of topological entropy. We need only Theorem \ref{GY} and Theorem \ref{firstdeg}.
Let $X$ be a compact topological space with a metric space structure $(X,d)$. Let $f :X \to X$ be a surjective continuous map.  To define the topological entropy, we need the notion of $(n.\epsilon)$-separated subsets of $X$. 
\begin{dfn}$($\cite{Ruf71}$)$
Take a positive real number $\epsilon >0$ and a positive integer $n>0$. Points $x ,y \in X$ are $(n,\epsilon)$-separated if  $\mathrm{max}_{0 \leq i \leq n-1}d(f^i(x),f^i(y)) \geq \epsilon$. A subset $F \subset X$ is called $(n,\epsilon)$-separated if any two distinct points $x,y \in F$ are $(n,\epsilon)$-separated. Due to compactness of $X$, we can prove that 
\[N_d(n,\epsilon):=\mathrm{max}\{\# F \mid \text{$F \subset X$ is $(n,\epsilon)$-separated}.\}\] 
is finite.
\end{dfn}

\begin{dfn}$($\cite{Ruf71}$)$
The topological entropy $h_{\mathrm{top}}(f)$ of $f$ is defined as follow:
\[ h_{\mathrm{top}}(f):= \lim_{\epsilon \to +0}\limsup_{n \to \infty} \frac{\log N_d(n,\epsilon)}{n} \in [0,\infty].\]
The topological entropy $h_{\mathrm{top}}(f)$ of $f$ is independent of a choice of a metric $d$ on the topological space $X$. 
\end{dfn}
Topological entropy is related to spectral radii.
\begin{dfn}
Let $V$ be a finite dimensional vector space and $\phi : V \to V$ be an linear map. The spectral radius $\rho(\phi)$ of $\phi$ is the maximum of the absolute value of eigen values of $\phi$. 
\end{dfn}
We can compute topological entropy by the following theorems. 
\begin{thm}$($\cite{Gro1}, \cite{Gro2}, \cite{Yom}$)$ \label{GY}
Let $X$ be a smooth projective variety. Consider a surjective holomorphic endomorphism $f : X \to X$.
Then the following holds:
\begin{align*}
h_{\mathrm{top}}(f)&=\log \rho(f^*|_{\bigoplus_{p=0}^{\mathrm{dim}X} H^{p,p}(X,\mathbb{Z})})\\
&=\log \rho(f^*|_{\bigoplus_{p=0}^{\mathrm{dim}X} H^{2p}(X,\mathbb{Z})}).                   
\end{align*}
\end{thm}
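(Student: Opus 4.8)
The plan is to assemble the classical estimates of Gromov and Yomdin into the chain of inequalities
\[ \htop(f)\ \ge\ \log\rho\bigl(f^*|_{H^*(X,\ZZ)}\bigr)\ \ge\ \log\rho\bigl(f^*|_{\bigoplus_{p}H^{2p}(X,\ZZ)}\bigr)\ \ge\ \log\rho\bigl(f^*|_{\bigoplus_{p}H^{p,p}(X,\ZZ)}\bigr)\ \ge\ \htop(f), \]
which forces the four quantities to coincide; the resulting equalities are precisely the assertion. The two middle inequalities are free: since $f$ is holomorphic, $f^*$ respects the Hodge decomposition, so $\bigoplus_{p}H^{p,p}(X,\ZZ)\subset\bigoplus_{p}H^{2p}(X,\ZZ)\subset H^*(X,\ZZ)$ is a chain of $f^*$-invariant subspaces, and restricting an operator to an invariant subspace cannot enlarge its spectral radius. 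So only two inputs matter: Gromov's \emph{upper bound} $\htop(f)\le\log\rho(f^*|_{\bigoplus_{p}H^{p,p}})$ and Yomdin's \emph{lower bound} $\htop(f)\ge\log\rho(f^*|_{H^*})$, the latter being Shub's entropy conjecture in the $C^\infty$ case, which applies since holomorphic maps are $C^\infty$.

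For Gromov's bound I would first reduce to volume growth of iterated graphs. Fix a K\"ahler form $\omega$ on $X$; for each $n$ let $\Gamma_n\subset X^n$ be the graph of $(\mathrm{id},f,\dots,f^{n-1})$, carrying the volume attached to $\sum_{k=0}^{n-1}\pi_k^*\omega$. A maximal $(n,\epsilon)$-separated subset lifts to points of $\Gamma_n$ that are pairwise $\epsilon$-far in this metric; around them one finds essentially disjoint balls in $\Gamma_n$ of volume bounded below in terms of $\epsilon$ alone, so $N_d(n,\epsilon)\le C(\epsilon)\,\mathrm{vol}(\Gamma_n)$ and hence $\htop(f)\le\limsup_n\frac1n\log\mathrm{vol}(\Gamma_n)$. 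Next, the first projection identifies $\Gamma_n$ with $X$, and Wirtinger's formula gives
\[ \mathrm{vol}(\Gamma_n)=\frac1{d!}\int_X\Bigl(\sum_{k=0}^{n-1}(f^k)^*\omega\Bigr)^{d},\qquad d=\dim X, \]
which expands into at most $n^{d}$ summands $\int_X(f^{k_1})^*\omega\wedge\dots\wedge(f^{k_d})^*\omega$. Each summand depends only on the classes $(f^{k_j})^*[\omega]\in H^{1,1}(X)$, whose norms grow at most like $\rho(f^*|_{\bigoplus_{p}H^{p,p}})^{n}$ up to a polynomial factor coming from Jordan blocks of $f^*$; thus $\mathrm{vol}(\Gamma_n)\le C'n^{N}\rho(f^*|_{\bigoplus_{p}H^{p,p}})^{n}$, and taking $\limsup\frac1n\log$ yields Gromov's bound.

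For Yomdin's bound the key input is his theorem that, for a $C^\infty$ self-map of a compact manifold and any compact subset $\Sigma$ (e.g.\ a $C^\infty$ submanifold), $\limsup_n\frac1n\log\mathrm{vol}(f^n(\Sigma))\le\htop(f)$; its proof is a delicate argument in semialgebraic geometry via Yomdin's reparametrization lemma bounding the volume distortion of $C^{k}$ maps, and the $C^\infty$ hypothesis is essential in order to let $k\to\infty$ and absorb an error term. Granting it, choose $k$ with $\rho(f^*|_{H^k(X,\RR)})=\rho(f^*|_{H^*(X,\RR)})=:\mu$; then $\|(f^n)_*|_{H_k(X,\RR)}\|\gtrsim\mu^n$, so after subdividing a fixed set of representing cycles one gets a finite union $\Sigma$ of compact $C^\infty$ submanifolds with $\|(f^n)_*[\Sigma]\|\gtrsim\mu^n$ for a suitable sequence of $n$. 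Pairing the current $f^n_*[\Sigma]$ against a fixed smooth closed form detecting the dominant eigendirection (a suitable power of $[\omega]$ on the Poincar\'e dual, the cases $k$ even and odd being handled uniformly) forces $\mathrm{vol}(f^n(\Sigma))\ge\mathrm{mass}(f^n_*[\Sigma])\gtrsim\mu^n$, whence $\htop(f)\ge\log\mu$. Combining the two bounds closes the chain.

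The hard part is unquestionably Yomdin's lower bound: reestablishing the smooth entropy conjecture lies far beyond the scope of such a sketch, so in practice one invokes \cite{Yom} directly (and \cite{Gro1}, \cite{Gro2} for the upper bound). What remains is routine bookkeeping: non-semisimplicity of $f^*$ contributes only polynomial factors invisible to $\limsup\frac1n\log$, and the passage from a cohomological lower bound to a lower bound on the mass of $f^n_*[\Sigma]$ is elementary once the parity of $k$ is accounted for.
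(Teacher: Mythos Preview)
The paper does not prove this theorem at all; it is stated as a known result and attributed to \cite{Gro1}, \cite{Gro2}, \cite{Yom} without any argument. Your sketch is a faithful outline of the classical Gromov--Yomdin proof (upper bound via volume growth of iterated graphs and Wirtinger, lower bound via Yomdin's resolution of the $C^\infty$ entropy conjecture), so there is nothing in the paper to compare it against beyond the citation.
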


\begin{thm}\label{firstdeg}$($\cite{Ogu0}$)$
Let $M$ be a  hyperK\"ahler manifold. Let $f:M \to M$ be a surjective holomorphic endomorphism. Then we have 
\[ h_{\mathrm{top}}(f)=\frac{1}{2}\mathrm{dim}M \cdot \log \rho(f^*|_{H^2(M,\mathbb{Z})}).\]
\end{thm}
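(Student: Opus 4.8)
The plan is to reduce the statement to the first equality of Theorem \ref{GY}, and then to use the very special structure of the cohomology ring of a \hkmfd\ --- in particular the Fujiki relation and the fact that $q_M$ controls all top self-intersections --- to show that the spectral radius of $f^*$ on the full even cohomology is exactly the $n$-th power of its spectral radius on $H^2$, where $\dim M = 2n$. First I would record that since $f : M \to M$ is a surjective holomorphic endomorphism of a projective variety, Theorem \ref{GY} applies and gives $\htop(f) = \log\rho\big(f^*|_{\bigoplus_p H^{2p}(M,\ZZ)}\big)$; so it suffices to prove
\[ \rho\big(f^*|_{\bigoplus_{p=0}^{2n} H^{2p}(M,\ZZ)}\big) = \rho\big(f^*|_{H^2(M,\ZZ)}\big)^{n}. \]

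Next I would isolate the key mechanism. Because $f$ is an endomorphism of a \hkmfd, $f^*$ preserves $q_M$ up to a positive scalar: concretely, writing $\lambda = \rho(f^*|_{H^2})$ one shows $f^*$ acts on $H^2$ by an isometry composed with multiplication by a scalar whose absolute value is $\lambda$, using the Fujiki relation $\int_M \alpha^{2n} = F_M \, q_M(\alpha)^n$ together with the fact that $\deg f \geq 1$ forces the scalar to be (the relevant root of) $\deg f$, and in the pure automorphism/positive-entropy setting $\deg f = 1$ so $f^*|_{H^2}$ is an isometry with some eigenvalue of modulus $\lambda$ and its inverse $\lambda^{-1}$ also occurring. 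The substantive input is then the structure of $\bigoplus_p H^{2p}(M,\ZZ)$ as a module over the subalgebra of $H^*(M,\ZZ)$ generated by $H^2$: by results of Verbitsky and Bogomolov on the cohomology of \hkmfds, multiplication induces an injection $\mathrm{Sym}^p H^2(M) \hookrightarrow H^{2p}(M)$ for $p \leq n$, and the part of $H^{2p}$ not in the image of this symmetric algebra contributes eigenvalues of $f^*$ of strictly smaller modulus (because those classes pair trivially with high powers of the ample class while the image of $\mathrm{Sym}^n H^2$ contains the "dominant" direction $\alpha^n$ where $\alpha$ is an eigenvector of $f^*|_{H^2}$ of modulus $\lambda$). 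Hence the top modulus on $\bigoplus_p H^{2p}$ is attained on the image of $\mathrm{Sym}^n H^2 \hookrightarrow H^{2n}$, where $f^*$ acts with spectral radius $\lambda^n$; this gives the desired equality, since eigenvalues on $H^{2p}$ for $p \leq n$ are bounded by $\lambda^p \leq \lambda^n$ and Poincar\'e duality together with $f^* f_* = \deg f$ controls $p > n$ symmetrically.

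I would then assemble the proof: apply Theorem \ref{GY}, observe that the dominant eigenvalue over the whole even cohomology comes from the factor $H^{2n}$ via the class $\alpha^{n}$ for $\alpha$ a $\lambda$-eigenvector (or, in the non-diagonalizable case, a Jordan-block generalized eigenvector) of $f^*|_{H^2}$, check that no other graded piece produces a larger modulus using the Fujiki relation and the Verbitsky-type injectivity of cup product, and conclude $\rho(f^*|_{\bigoplus_p H^{2p}}) = \lambda^{n} = \rho(f^*|_{H^2})^{n}$. Taking logs yields $\htop(f) = n \log\lambda = \tfrac12 \dim M \cdot \log\rho(f^*|_{H^2(M,\ZZ)})$.

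The main obstacle I anticipate is making the comparison of spectral radii across the different graded pieces fully rigorous: one must rule out the possibility that $f^*$ has an eigenvalue on some $H^{2p}$ with $p < n$ (or on the "transcendental" complement of $\mathrm{Sym}^p H^2$ inside $H^{2p}$) of modulus exceeding $\lambda^p$, and control the $p > n$ range via Poincar\'e duality and the relation between $f^*$ and $f_*$. This is exactly where the \hkmfd\ hypothesis is essential --- for a general variety the analogous statement is false --- and the cleanest route is to invoke the known description of $H^*(M,\QQ)$ as a module generated in degrees $\leq 2$ over the quadratic form $q_M$ (Verbitsky), so that the action of $f^*$ is "built from" its action on $H^2$ and its spectral radius is genuinely multiplicative in the expected way. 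I expect the lattice-theoretic and Fujiki-relation parts to be routine; the module-structure step is the heart of the argument.
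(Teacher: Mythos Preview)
The paper does not give a proof of Theorem~\ref{firstdeg}; it is quoted as a preliminary result from \cite{Ogu0} and used as a black box. So there is no in-paper argument to compare your proposal against.

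Your outline is broadly in the right spirit and identifies the correct ingredients (Theorem~\ref{GY}, the Fujiki relation, and the Verbitsky--Bogomolov structure of $H^*(M,\QQ)$), but the step you yourself flag as the obstacle is genuinely the hard one, and the justification you offer is not adequate. Saying that classes outside $\mathrm{Sym}^p H^2$ ``pair trivially with high powers of the ample class'' does not by itself bound the modulus of their $f^*$-eigenvalues; an $f^*$-eigenvector in the complement could in principle still have large eigenvalue. What one actually uses in Oguiso's argument is the \emph{log-concavity of dynamical degrees} $d_p(f)^2 \geq d_{p-1}(f)\,d_{p+1}(f)$ (a consequence of Hodge--Riemann/Khovanskii--Teissier type inequalities), together with the Fujiki relation, which forces $d_p(f)=d_1(f)^p$ for all $0\leq p\leq 2n$. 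This bypasses the need to analyze the complement of $\mathrm{Sym}^p H^2$ directly: one gets $\max_p d_p(f)=d_n(f)=d_1(f)^n$ purely from convexity considerations. Your Verbitsky-module route can be made to work, but it requires an honest argument that $f^*$ on the orthogonal complement of the subalgebra generated by $H^2$ has spectral radius bounded by the relevant power of $\lambda$, and that is not supplied here.

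A second minor issue: your discussion of the scalar by which $f^*$ rescales $q_M$ is muddled. For a surjective holomorphic endomorphism of a \hkmfd\ one in fact has that $f$ is an automorphism (this uses that $M$ carries a holomorphic symplectic form), so $\deg f=1$ and $f^*$ is a genuine isometry of $(H^2,q_M)$; you should state and use this directly rather than leave the general-endomorphism case hanging.
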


\subsection{Categorical entropy}
In this subsection, we recall the notion of the categorical entropy of endofunctors on triangulated categories \cite{DHKK}. Let $\mathcal{D}$ be a triangulated category. Consider an endofunctor $\Phi : \mathcal{D} \to \mathcal{D}$. 

\begin{dfn}
An object $G$ in $\mathcal{D}$ is called a splitting generator if for any object $E \in \mathcal{D}$, there are exact triangles $E_{i-1} \to E_i \to G[n_i]$ $(1 \leq i \leq k)$ and some object $E^{\prime} \in \mathcal{D}$ such that $E_0=0, E_k=E \oplus E^{\prime}$. 
\end{dfn}

\begin{thm}$($\cite{Orl09}$)$
Let $X$ be a quasi-projective scheme. Let $\mathcal{O}_X(1)$ be a very ample line bundle on $X$.
Then $G:=\bigoplus_{i=0}^{\mathrm{dim}X}\mathcal{O}_X(i)$ is a splitting generator of the triangulated category  $\mathrm{Perf}(X)$ of perfect comlexes on $X$. In particular, if $X$ is smooth, $G$ is a splitting generator of $D^b(X)$.
\end{thm}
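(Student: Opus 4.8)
Write $n=\dim X$. The plan is to prove the stronger statement that the thick (triangulated, summand-closed) subcategory $\langle G\rangle$ generated by $G=\bigoplus_{i=0}^{n}\mathcal{O}_X(i)$ is all of $\mathrm{Perf}(X)$, and to do so in two essentially independent steps: first that the much larger family of all twists $\{\mathcal{O}_X(i):i\in\mathbb{Z}\}$ classically generates $\mathrm{Perf}(X)$, then that each single $\mathcal{O}_X(i)$ in fact already lies in $\langle G\rangle$. If $X$ is only quasi-projective I would reduce to the projective case first: passing to the closure $\bar X\subset\mathbb{P}^N$ of $X$ (so $\dim\bar X=n$ and $\mathcal{O}_{\bar X}(1)|_X=\mathcal{O}_X(1)$), the restriction functor $\mathrm{Perf}(\bar X)\to\mathrm{Perf}(X)$ is, by Thomason--Trobaugh, a Verdier localization up to direct summands, so it carries a classical generator to a classical generator; as it sends $\bigoplus_{i=0}^{n}\mathcal{O}_{\bar X}(i)$ to $G$, it suffices to treat $X$ projective.

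For the first step, recall that $D_{\mathrm{qc}}(X)$ is compactly generated and that its compact objects are exactly the objects of $\mathrm{Perf}(X)$. The family $\{\mathcal{O}_X(i)\}_{i\in\mathbb{Z}}$ consists of compact objects, and it generates $D_{\mathrm{qc}}(X)$: if $E\in D_{\mathrm{qc}}(X)$ satisfies $\mathrm{Hom}(\mathcal{O}_X(i),E[j])=H^j(X,E(-i))=0$ for all $i,j$, then inspecting the top nonvanishing cohomology sheaf of $E$ and invoking Serre's vanishing theorem forces $E=0$. By the theorem that a generating set of compact objects classically generates the subcategory of compact objects (Neeman; Bondal--Van den Bergh), it follows that $\langle\mathcal{O}_X(i):i\in\mathbb{Z}\rangle=\mathrm{Perf}(X)$.

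For the second step I would choose $n+1$ sufficiently general sections $s_0,\dots,s_n\in H^0(X,\mathcal{O}_X(1))$ with empty common zero locus on $X$; these exist because, viewing $X\subset\mathbb{P}^N$ via the very ample $\mathcal{O}_X(1)$, a general linear subspace of dimension $N-n-1$ is disjoint from $X$ for dimension reasons. The Koszul complex of $(s_0,\dots,s_n)$,
\[ 0\to\mathcal{O}_X(-n-1)\to\mathcal{O}_X(-n)^{\oplus(n+1)}\to\cdots\to\mathcal{O}_X(-1)^{\oplus(n+1)}\to\mathcal{O}_X\to0, \]
is then acyclic: locally, after trivializing $\mathcal{O}_X(1)$, the $s_i$ become functions generating the unit ideal, for which the Koszul complex is contractible. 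Reading off the two extreme terms of this exact complex gives $\mathcal{O}_X\in\langle\mathcal{O}_X(-1),\dots,\mathcal{O}_X(-n-1)\rangle$ and $\mathcal{O}_X(-n-1)\in\langle\mathcal{O}_X(-n),\dots,\mathcal{O}_X\rangle$; twisting the first by $\mathcal{O}_X(n+1)$ and the second by $\mathcal{O}_X(n)$ yields $\mathcal{O}_X(n+1)\in\langle G\rangle$ and $\mathcal{O}_X(-1)\in\langle G\rangle$. A short induction in both directions, twisting these two memberships, then shows $\mathcal{O}_X(i)\in\langle G\rangle$ for every $i\in\mathbb{Z}$. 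Combined with the first step this gives $\mathrm{Perf}(X)=\langle\mathcal{O}_X(i):i\in\mathbb{Z}\rangle\subseteq\langle G\rangle\subseteq\mathrm{Perf}(X)$, so $G$ is a splitting generator; and when $X$ is smooth $\mathrm{Perf}(X)=D^b(X)$, which gives the final assertion.

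The calculations needed — acyclicity of the Koszul complex and the two one-line inductions — are routine. The one genuinely non-formal ingredient is the first step: that a generating family of compact objects classically generates the subcategory of compact objects. I expect invoking this result (together with Serre vanishing, which enters only to verify the orthogonality hypothesis) to be the crux of the proof, with everything else amounting to bookkeeping with Serre twists.
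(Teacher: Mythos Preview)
The paper does not supply its own proof of this theorem: it is stated as a result due to Orlov and simply cited from \cite{Orl09}, with no argument given. So there is nothing in the paper to compare against beyond the attribution.

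Your proposal is a correct outline and is essentially the standard proof (and indeed close to Orlov's own argument): reduce to the projective case via Thomason--Trobaugh localization, observe that the full family $\{\mathcal{O}_X(i)\}_{i\in\mathbb{Z}}$ classically generates $\mathrm{Perf}(X)$ by compact generation of $D_{\mathrm{qc}}(X)$ together with Neeman's theorem, and then use the acyclic Koszul complex coming from $n+1$ general sections of $\mathcal{O}_X(1)$ to show every twist lies in the thick subcategory generated by any window of $n+1$ consecutive twists. The only point worth a remark is that the ``first step'' can also be done more directly, without invoking the Neeman/Bondal--Van den Bergh machinery: for $X$ projective, any coherent sheaf admits a finite resolution by direct sums of line bundles $\mathcal{O}_X(-i)$, and a perfect complex is quasi-isomorphic to a bounded complex of such; this already places every perfect complex in $\langle\mathcal{O}_X(i):i\in\mathbb{Z}\rangle$. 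Either route is fine.
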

\begin{dfn}$($\cite{DHKK}$)$
Let $E$ and $F$ be objects in $\mathcal{D}$. Take a real number $t \in \mathbb{R}$.
We define the complexity $\delta_{t}(G,E)$ of $F$ with respect to $E$ as follow:
\[ \delta_{t}(G,E):=\inf \biggl\{\sum_{i=1}^{k}e^{n_i t} \mid E_{i-1} \to E_i \to G[n_i] (1 \leq i \leq k) ,E_0=0, E_k=E \oplus E^{\prime}\biggr\}. \] 
\end{dfn}
Using complexity, we can define the notion of the categorical entropy. 
\begin{dfn}$($\cite{DHKK}$)$
Let $G$ be a splitting generator of $\mathcal{D}$. We define the categorical entropy $h_{t}(\Phi)$ of $\Phi$ at $t \in \mathbb{R}$ as follow:
\[ h_{t}(\Phi):= \lim_{n \to \infty} \frac{\log \delta_{t}(G, \Phi^n(G))}{n} \in \mathbb{R} \cup \{-\infty\}. \]
We call the entropy $h_{0}(\Phi)$ of $\Phi$ at $0$ the categorical entropy $h_{\mathrm{cat}}(\Phi)$ of $\Phi$. 
Then we have $h_{\mathrm{cat}}(\Phi) \geq 0$. The entropy of $\Phi$ is independent to a choice of a generator $G$.
\end{dfn}

The following is the analogue of Theorem \ref{GY}.
\begin{thm}$($\cite{DHKK}$)$
Let $X$ be a smooth projective variety with $H^{\mathrm{odd}}(X, \mathbb{C})=0$. Consider an autoequivalence $\Phi \in \mathrm{Aut}(D^b(X))$ and let $\Phi^H : H^*(X, \mathbb{C}) \to H^*(X,\mathbb{C})$ be the induced linear map on the cohomology group. 
Then $h_{\mathrm{cat}}(\Phi) \geq \log \rho (\Phi^H)$.
\end{thm}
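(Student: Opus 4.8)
The plan is to bound $\hcat(\Phi)$ from below by the exponential growth rate of a suitable Euler characteristic, and then to extract $\log\rho(\Phi^{H})$ from that rate via Hirzebruch--Riemann--Roch together with a short argument about isometries of the Mukai pairing.

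First I would record an elementary lower bound for complexity. Fix a split generator $G$ of $D^b(X)$ and an arbitrary object $P$, and observe that $E\mapsto\dim\mathrm{Hom}^{\bullet}(P,E):=\sum_i\dim\mathrm{Hom}(P,E[i])$ is additive on direct sums, invariant under shifts, and subadditive along exact triangles. Hence, along any presentation $E_{i-1}\to E_i\to G[n_i]$ $(1\le i\le k)$, $E_0=0$, $E_k=E\oplus E'$, one gets $\dim\mathrm{Hom}^{\bullet}(P,E)\le k\cdot\dim\mathrm{Hom}^{\bullet}(P,G)$; taking the infimum over such presentations yields $\delta_0(G,E)\ge\dim\mathrm{Hom}^{\bullet}(P,E)/\dim\mathrm{Hom}^{\bullet}(P,G)$. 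Applying this with $E=\Phi^n(F)$ for an arbitrary object $F$, and using that complexity is submultiplicative while $\Phi^n$ is an equivalence (so $\delta_0(G,\Phi^n(F))\le\delta_0(G,\Phi^n(G))\,\delta_0(G,F)$), I obtain
\[ \hcat(\Phi)=\lim_{n\to\infty}\tfrac{1}{n}\log\delta_0(G,\Phi^n(G))\ \ge\ \limsup_{n\to\infty}\tfrac{1}{n}\log\dim\mathrm{Hom}^{\bullet}(P,\Phi^n(F)) \]
for every pair of objects $P,F$, the generator $G$ having disappeared from the right-hand side.

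Next I would pass to Euler characteristics: $\dim\mathrm{Hom}^{\bullet}(P,\Phi^n(F))\ge|\chi(P,\Phi^n(F))|$, and since $H^{\mathrm{odd}}(X,\CC)=0$ identifies $H^*(X,\CC)$ with $H^{\mathrm{even}}$ and makes Hirzebruch--Riemann--Roch read $\chi(E,F)=\langle v(E),v(F)\rangle$ for the Mukai pairing, while $v(\Phi(E))=\Phi^{H}(v(E))$ by construction of the cohomological Fourier--Mukai transform, this gives
\[ \hcat(\Phi)\ \ge\ \limsup_{n\to\infty}\tfrac{1}{n}\log\bigl|\langle v(P),(\Phi^{H})^{n}v(F)\rangle\bigr|. \]
Now I would choose $P$ and $F$ so that the right-hand side equals $\log\rho(\Phi^{H})$. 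The key inputs are that $\Phi^{H}$ is an isometry of the Mukai pairing, that the pairing is non-degenerate on the algebraic part $H^{*}_{\mathrm{alg}}(X,\CC):=v(K_{\mathrm{num}}(X))\otimes\CC$ (which $\Phi^{H}$ preserves, as does its orthogonal complement), and that realizable Mukai vectors form a full-rank lattice in $H^{*}_{\mathrm{alg}}$. Being an isometry, $\Phi^{H}$ pairs its generalized eigenspaces for $\lambda$ and $\lambda^{-1}$ dually, so if $\lambda$ attains $\rho:=\rho(\Phi^{H}|_{H^{*}_{\mathrm{alg}}})$ with $|\lambda|>1$ (otherwise there is nothing to do on the algebraic part), then for $v(F)$ with nonzero $\lambda$-component and $v(P)$ with nonzero $\lambda^{-1}$-component --- conditions that each exclude only a proper subspace, hence are met by suitable lattice vectors --- one has $|\langle v(P),(\Phi^{H})^{n}v(F)\rangle|\sim|\lambda|^{n}$, giving $\hcat(\Phi)\ge\log\rho$. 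Finally, on the transcendental complement $\Phi^{H}$ has spectral radius $1$ (for K3 surfaces this is finiteness of the transcendental action; in any case $\log 1=0\le\hcat(\Phi)$), so $\rho(\Phi^{H})$ equals $\rho(\Phi^{H}|_{H^{*}_{\mathrm{alg}}})$ as far as the inequality is concerned, and $\hcat(\Phi)\ge\log\rho(\Phi^{H})$ follows.

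I expect the last step to be the main obstacle: a priori $|\chi(P,\Phi^n(F))|$ could grow strictly slower than $\dim\mathrm{Hom}^{\bullet}(P,\Phi^n(F))$ because of cancellation in the alternating sum, so one must ensure the test objects are chosen so that the Euler pairing itself already grows at rate $\log\rho(\Phi^{H})$; this is exactly where non-degeneracy of the Mukai pairing and the $\lambda\leftrightarrow\lambda^{-1}$ duality of the eigenspaces of the isometry $\Phi^{H}$ enter. It also indicates why this bound is in general far from an equality, which is the reason the main theorem only yields the inequality $\tfrac{1}{2}\dim M_\sigma(v)\cdot\hcat(\Phi)\ge\htop(\phi)$.
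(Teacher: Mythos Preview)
The paper's own proof is a two-sentence citation to \cite{DHKK}; you have essentially reconstructed that argument, and your first two steps --- bounding $\delta_0$ below by $\dim\mathrm{Hom}^{\bullet}$, then by $|\chi|$, followed by the $\lambda\leftrightarrow\lambda^{-1}$ eigenspace duality for an isometry of a non-degenerate pairing --- are correct and are the standard route.

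The gap is in your last paragraph. What your argument actually yields is $\hcat(\Phi)\ge\log\rho\bigl(\Phi^{H}|_{H^{*}_{\mathrm{alg}}}\bigr)$, where $H^{*}_{\mathrm{alg}}$ is the $\CC$-span of Mukai vectors of objects, i.e.\ the image of $K_{\mathrm{num}}(X)\otimes\CC$ under the Chern character. Your parenthetical ``in any case $\log 1=0\le\hcat(\Phi)$'' is not an argument that $\rho(\Phi^{H})=\rho(\Phi^{H}|_{H^{*}_{\mathrm{alg}}})$ for general $X$: if the transcendental spectral radius exceeded the algebraic one, the stated inequality would not follow from what you proved. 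For K3 surfaces the transcendental action of a Hodge isometry is of finite order (via the constraint on the one-dimensional $H^{2,0}$), so your argument is complete there, and this suffices for every application in the paper --- in Theorem~\ref{posent} and Corollary~\ref{maincor} the spectral radius is only ever computed on $\Halg$. Note also that the result in \cite{DHKK} is really stated for the induced map on the numerical Grothendieck group; the role of the hypothesis $H^{\mathrm{odd}}(X,\CC)=0$ (their Lemma~2.7) is precisely to guarantee that $K_{\mathrm{num}}(X)$ is a finite-rank lattice with non-degenerate Euler form, which is exactly what your eigenspace argument uses. So the gap is partly a looseness in the paper's paraphrase of the cited result, and what you have established is what is actually needed.
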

\begin{proof}
This is Theorem 2.8 in \cite{DHKK}. The assumption  $H^{\mathrm{odd}}(X, \mathbb{C})=0$ implies the assumption in Lemma 2.7 in \cite{DHKK}.
\end{proof}
For a K3 surface $S$, we have $H^{\mathrm{odd}}(S,\mathbb{C})=0$.
Kikuta and Takahashi \cite{KT16} proposed the following conjecture.

\begin{conj}$($\cite{KT16}$)$\label{KTconj}
Let $X$ be a smooth projective variety. For $\Phi \in \Aut(D^b(X))$, we have 
\[ \hcat(\Phi)=\log \rho([\Phi]).\]
Here, $[\Phi]$ is the induced isomorphism on $K_{\mathrm{num}}(X)$.
\end{conj}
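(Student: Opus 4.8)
\begin{plan}
The equality splits into $\hcat(\Phi)\ge\log\rho([\Phi])$ and $\hcat(\Phi)\le\log\rho([\Phi])$, which I would treat separately, concentrating on the case relevant to this paper: $X=S$ a K3 surface, where the Mukai vector identifies $K_{\mathrm{num}}(S)$ with $\algmukai$. The lower bound is essentially the DHKK inequality recalled above: since $H^{\mathrm{odd}}(S,\CC)=0$, we have $\hcat(\Phi)\ge\log\rho(\Phi^H)$ for the induced action $\Phi^H$ on $H^*(S,\CC)$, and $\rho(\Phi^H)\ge\rho([\Phi])$ because $K_{\mathrm{num}}(S)\otimes\QQ$ sits inside $H^{\mathrm{even}}(S,\QQ)$ as an invariant subspace for $\Phi^H$ on which the action is $[\Phi]$ (this much holds for any $X$ with $H^{\mathrm{odd}}=0$). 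For a K3 surface the two radii in fact agree: decompose $H^*(S,\CC)=\big(H^{2,0}(S)\oplus H^{0,2}(S)\big)\oplus\big(\algmukai\otimes\CC\big)$ into $\Phi^H$-invariant summands (both invariant since $\Phi^H$ is a Hodge isometry); on the first, $\Phi^H$ is an isometry of a positive definite real $2$-plane, so its eigenvalues lie on the unit circle, whereas $\rho([\Phi])\ge1$ since $[\Phi]$ is a lattice isometry, giving $\rho(\Phi^H)=\max\{1,\rho([\Phi])\}=\rho([\Phi])$.

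For the upper bound, which carries the real content, I would estimate the complexity $\delta_0(G,\Phi^{n}(G))$ from above via Bridgeland stability. Fix $\sigma=(Z,\mathcal P)\in\stab(S)$ and a split generator $G$, and write $m_\sigma(E):=\sum_i|Z_\sigma(A_i)|$ for the total mass of the Harder--Narasimhan/Jordan--H\"older constituents $A_i$ of $E$. Filtering $\Phi^{n}(G)$ through these constituents, $\delta_0(G,\Phi^{n}(G))$ is at most (the number of constituents) times $\max_A\delta_0(G,A)$ over the $\sigma$-stable $A$ occurring; for $\sigma$-stable $A$ one bounds $\delta_0(G,A)$ polynomially in $\|v(A)\|$, which by the support property is $\le C\,|Z_\sigma(A)|\le C\,m_\sigma(\Phi^{n}(G))$. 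Since the central charge transforms by $Z_\sigma(\Phi^{n}(E))=Z_\sigma([\Phi]^{n}v(E))$, one has $|Z_\sigma(\Phi^{n}G)|\le\|Z_\sigma\|\cdot\|[\Phi]^{n}\|\cdot\|v(G)\|$ with $\tfrac1n\log\|[\Phi]^{n}\|\to\log\rho([\Phi])$, polynomial Jordan-block corrections washing out in the limit. Hence, \emph{provided} both the number of constituents and $m_\sigma(\Phi^{n}(G))$ grow at exponential rate at most $\log\rho([\Phi])$, one obtains $\hcat(\Phi)=\lim_{n\to\infty}\tfrac1n\log\delta_0(G,\Phi^{n}G)\le\log\rho([\Phi])$.

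The main obstacle is precisely this proviso: one must bound, uniformly in $n$, both the number of constituents of $\Phi^{n}(G)$ and $m_\sigma(\Phi^{n}(G))$, and here wall-crossing interferes, since $\Phi^{-n}\sigma$ sweeps through infinitely many chambers of $\stab(S)$ as $n\to\infty$ and phase cancellations could a priori inflate the total mass far past $|Z_\sigma(\Phi^{n}G)|$. The natural way to organize this is via the mass-growth functional $h_{\sigma,t}(\Phi)$ of DHKK and Ikeda together with the comparison $\hcat(\Phi)=\sup_\sigma h_{\sigma,0}(\Phi)$, which reduces the task to showing that the $\sigma$-mass growth of $\Phi$ equals $\log\rho([\Phi])$. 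For a K3 surface of Picard number one this is considerably more tractable than the general case, because $\stab(S)$, the lattice $\algmukai$, and the $\Aut(D^b(S))$-action on them (which surjects onto $\OHH(H^*(S,\ZZ))$ via $(-)^H$) are all explicit, so the wandering of $\Phi^{-n}\sigma$ can be turned into a concrete matrix-norm estimate for $[\Phi]^{n}$ acting on $\algmukai$; in full generality, though, the equality is genuinely delicate, which is why it remains conjectural and is used here only conditionally.
\end{plan}
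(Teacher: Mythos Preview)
The statement you are attempting to prove is labeled \emph{Conjecture} in the paper, attributed to \cite{KT16}, and the paper offers no proof of it whatsoever; it is invoked only conditionally in the sentence ``If Conjecture \ref{KTconj} \ldots\ is true, then the following holds.'' So there is no proof in the paper to compare against, and you are in effect sketching an approach to an open problem.

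Your lower-bound direction is fine and essentially the content of the DHKK inequality the paper quotes just above the conjecture; your refinement that for a K3 surface $\rho(\Phi^H)=\rho([\Phi])$ via the Hodge decomposition of $H^*(S,\CC)$ is also correct. The upper bound, however, is where the conjecture lives, and your plan does not close the gap. The specific obstruction is the one you flag yourself: bounding the mass $m_\sigma(\Phi^n G)$ and the number of HN/JH constituents by something growing like $\rho([\Phi])^n$. The inequality $|Z_\sigma(\Phi^n G)|\le \|Z_\sigma\|\cdot\|[\Phi]^n\|\cdot\|v(G)\|$ controls only the \emph{net} central charge, not the total mass $\sum_i|Z_\sigma(A_i)|$, and phase cancellation among the $A_i$ is exactly what could make the mass exponentially larger than $|Z_\sigma(\Phi^n G)|$. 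Your appeal to the Ikeda comparison $\hcat(\Phi)=\sup_\sigma h_{\sigma,0}(\Phi)$ does not avoid this, since that identity itself requires hypotheses (e.g.\ existence of a stability condition together with control on how $\Phi$ moves it) that are not known for arbitrary smooth projective $X$, and even for a K3 surface of Picard rank one the claim that ``the wandering of $\Phi^{-n}\sigma$ can be turned into a concrete matrix-norm estimate'' hides the same difficulty: autoequivalences in $\Aut^0(D^b(S))$ act trivially on $\algmukai$ yet move $\sigma$ nontrivially through $\stab(S)$, so the cohomological action $[\Phi]$ alone cannot see the full displacement. In short, your plan correctly isolates where the problem is, but does not resolve it; this is consistent with the statement's status as a conjecture.
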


\subsection{Automorphisms of hyperK\"ahler manifolds of Picard number two}
In this subsection, we collect important facts about automorphisms of hyperK\"ahler manifolds of Picard number two.
First, Hilbert schemes of points on K3 surfaces of Picard number one have finite birational automorphism groups. 
\begin{prop}\label{hilb}$($\cite{Ogu1}$)$
Let $S$ be a K3 surface of Picard number one. Let $n>0$ be a positive integer.
Then the biratiuonal automorphism group $\mathrm{Bir}(\mathrm{Hilb}^n(S))$ of the Hilbert scheme of points on $S$ is a finite group.
\end{prop}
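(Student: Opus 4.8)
The plan is to combine Lemma~\ref{kernel} with a study of the action of $\mathrm{Bir}(\mathrm{Hilb}^n(S))$ on the N\'eron--Severi lattice of $\mathrm{Hilb}^n(S)$ --- which has rank two --- using the Hilbert--Chow contraction to see that the movable cone is a \emph{proper} subcone of the positive cone. If $n=1$ then $\mathrm{Hilb}^1(S)=S$ is a hyperK\"ahler manifold of Picard number one and the claim is Corollary~\ref{picard}, so assume $n\ge 2$ and put $M=\mathrm{Hilb}^n(S)$. Then $H^2(M,\ZZ)=H^2(S,\ZZ)\oplus\ZZ\delta$ orthogonally, with $q_M(\delta)=-2(n-1)$, and since $\mathrm{NS}(S)=\ZZ h$ the lattice $\mathrm{NS}(M)=\ZZ h_M\oplus\ZZ\delta$ has rank two and signature $(1,1)$, where $h_M$ is the class induced by $h$ and $q_M(h_M)=h^2>0$. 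By Lemma~\ref{kernel} it suffices to prove that the image $G$ of $\mathrm{Bir}(M)$ in $\mathrm{O}(H^2(M,\ZZ))$ is finite.

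Since every $f\in\mathrm{Bir}(M)$ acts on $H^2(M,\ZZ)$ by a Hodge isometry, $G$ preserves $\mathrm{NS}(M)$ and its orthogonal complement $T:=\mathrm{NS}(M)^{\perp}$, which carries the Hodge structure of the transcendental lattice $T(S)$ of $S$. Restriction to the finite-index sublattice $\mathrm{NS}(M)\oplus T$ embeds $G$ into $\mathrm{O}(\mathrm{NS}(M))\times\mathrm{O}(T)$, and the image of $G$ in $\mathrm{O}(T)$ is finite: it consists of Hodge isometries of $T$, which lie in the compact subgroup of $\mathrm{O}(T_{\RR})$ preserving the positive-definite plane $(H^{2,0}\oplus H^{0,2})\cap H^2(S,\RR)$ and its negative-definite orthogonal complement, while also lying in the discrete group $\mathrm{O}(T)$. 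Hence it is enough to show that the image $\overline{G}$ of $G$ in $\mathrm{O}(\mathrm{NS}(M))$ is finite.

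Suppose $\overline{G}$ is infinite. A birational self-map of a hyperK\"ahler manifold is an isomorphism in codimension one, hence preserves the set of movable divisor classes, so $\overline{G}$ acts on the closed cone $\overline{\mathrm{Mov}}(M)\subset\mathrm{NS}(M)_{\RR}\cong\RR^{1,1}$. Any infinite group of isometries of $\RR^{1,1}$ contains an element of infinite order, so $\overline{G}$ contains such a $g$; replacing $g$ by a power we may assume it acts as a hyperbolic isometry of $\RR^{1,1}$ with positive real eigenvalues $\lambda>1>\lambda^{-1}$. The eigenvectors $v_{+},v_{-}$ are then isotropic (each is scaled by a factor $\neq 1$) and span the two extremal rays of $\overline{\mathrm{Pos}}(M)$. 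Letting the positive and negative powers of $g$ act on a class in the interior of $\overline{\mathrm{Mov}}(M)$, and using that $\overline{\mathrm{Mov}}(M)$ is a closed $g$-invariant cone, we get $\RR_{\ge0}v_{+}\subset\overline{\mathrm{Mov}}(M)$ and $\RR_{\ge0}v_{-}\subset\overline{\mathrm{Mov}}(M)$, hence $\overline{\mathrm{Mov}}(M)=\overline{\mathrm{Pos}}(M)$ by convexity. But for $n\ge2$ the Hilbert--Chow morphism $M=\mathrm{Hilb}^n(S)\to\mathrm{Sym}^n(S)$ is a divisorial contraction, corresponding to a divisorial wall of Hilbert--Chow type (Theorem~\ref{wall}); its associated ray $\RR_{\ge0}h_M$ lies on the boundary of $\overline{\mathrm{Mov}}(M)$, whereas $q_M(h_M)=h^2>0$ puts $h_M$ in the interior of $\overline{\mathrm{Pos}}(M)$. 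Thus $\overline{\mathrm{Mov}}(M)\subsetneq\overline{\mathrm{Pos}}(M)$, a contradiction; therefore $\overline{G}$, and hence $\mathrm{Bir}(\mathrm{Hilb}^n(S))$, is finite.

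I expect the crux to be this last incompatibility: one must ensure that the Hilbert--Chow ray bounds the \emph{movable} cone, not merely the nef cone, which relies on Bayer--Macri's description of the boundary of the movable cone by divisorial walls together with the positivity $q_M(h_M)=h^2>0$ read off from the geometry of $\mathrm{Hilb}^n(S)\to\mathrm{Sym}^n(S)$. A lesser point needing care is the reduction in the second paragraph, where one uses finiteness of the Hodge isometry group of the transcendental lattice of $S$ --- and not unimodularity of $H^2(M,\ZZ)$, which fails for $n\ge2$ --- to pass from $\mathrm{O}(H^2(M,\ZZ))$ to $\mathrm{O}(\mathrm{NS}(M))$.
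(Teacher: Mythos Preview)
The paper does not give its own proof of this proposition; it is stated twice (as Proposition~\ref{hilb}) and both times simply cited from \cite{Ogu1}. So there is no in-paper argument to compare against.

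Your proof is correct and is essentially Oguiso's original argument. The reduction via Lemma~\ref{kernel} and the splitting $G\hookrightarrow \mathrm{O}(\mathrm{NS}(M))\times\mathrm{O}(T)$ are sound; the finiteness of the image in $\mathrm{O}(T)$ by the compact--discrete trick is the standard step, and you are right not to invoke unimodularity of $H^2(M,\ZZ)$. The dynamical step---an infinite-order isometry of a hyperbolic plane forces $\overline{\mathrm{Mov}}(M)=\overline{\mathrm{Pos}}(M)$---is exactly the mechanism behind Theorem~\ref{Oguiso}. Your self-identified crux is the right one: Theorem~\ref{wall} as stated in this paper classifies walls in $\mathrm{Stab}^*(S)$, not walls of $\overline{\mathrm{Mov}}(M)$; the passage from ``the Hilbert--Chow wall is divisorial'' to ``$\RR_{\ge0}h_M$ bounds the movable cone'' requires the further input from \cite{BM13} that divisorial walls are precisely the boundary walls of the movable cone, which you correctly flag. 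With that granted, the contradiction $q_M(h_M)=h^2>0$ versus $h_M\in\partial\overline{\mathrm{Mov}}(M)$ is clean.
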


The following theorem due to Oguiso \cite{Ogu1} is important in the proof of the main theorem.
\begin{thm}\label{Oguiso}$($\cite{Ogu1}$)$
Let $M$ be a hyperK\"ahler manifold of Picard number two. Let $l_1 =\mathbb{R}_{\geq0}x_1$ and $\l_2=\mathbb{R}_{\geq0}x_2$ be boundary rays of the nef cone $\overline{\mathrm{Amp}}(M)$. 
Then the followings hold.
\begin{enumerate}
\item The boundary ray $l_1$ is rational if and only if the boundary ray $l_2$ is rational.
\item If the boundary ray $l_1$ is rational, then the automorphism group $\mathrm{Aut}M$ is a finite group.
\item If the boundary ray $l_1$ is irrational, then $\overline{\mathrm{Amp}}(M)=\overline{\mathrm{Mov}}(M)=\overline{\mathrm{Pos}}(M)$ and $\mathrm{Aut}M=\mathrm{Bir}M$ is an infinite group. Moreover, an automorphism $f \in \mathrm{Aut}M$ with $\mathrm{ord}(f)=\infty$ has the positive topological entropy.
\end{enumerate}
\end{thm}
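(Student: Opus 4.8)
The plan is to run the whole argument inside $\mathrm{NS}(M)_{\RR}\cong\RR^2$, on which the Beauville--Bogomolov--Fujiki form $q_M$ has signature $(1,1)$, so that $\overline{\mathrm{Pos}}(M)$ is a two-dimensional convex cone whose two boundary rays $R_1,R_2$ are the isotropic lines of $q_M$. These two rays are simultaneously rational or simultaneously irrational, according to whether the binary quadratic form $q_M|_{\mathrm{NS}(M)}$ represents $0$ over $\QQ$ (equivalently, whether $-\mathrm{disc}\,\mathrm{NS}(M)$ is a perfect square). Since $\overline{\mathrm{Amp}}(M)\subseteq\overline{\mathrm{Mov}}(M)\subseteq\overline{\mathrm{Pos}}(M)$, and any boundary ray of $\overline{\mathrm{Amp}}(M)$ meeting the open cone $\mathrm{Pos}(M)$ is rational — it is cut out by the numerical class of a contracted extremal curve, and for $M$ of $\mathrm{K3}^{[n]}$-type this is also immediate from Theorem \ref{nefcone} — each of $l_1,l_2$ is either rational or equal to $R_1$ or $R_2$.

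The first step is to establish the dichotomy: \emph{either} both $l_i$ are rational and $\Aut(M)$ is finite, \emph{or} $\overline{\mathrm{Amp}}(M)=\overline{\mathrm{Pos}}(M)$ with $R_1,R_2$ irrational and $\Aut(M)$ infinite. One half uses the Morrison--Kawamata cone conjecture for the ample cone, which holds for projective \hkmfds \ and is explicit for $\mathrm{K3}^{[n]}$-type: $\Aut(M)$ acts with a rational polyhedral fundamental domain on the convex hull of the rational points of the nef cone. If $\Aut(M)$ were finite, that hull would be a finite union of rational polyhedral cones, hence itself rational polyhedral with rational extremal rays; but in rank two rational nef classes accumulate at both $l_1$ and $l_2$, forcing $l_1,l_2$ rational. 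Hence, if some $l_i$ is irrational then $\Aut(M)$ is infinite. For the converse, suppose $\Aut(M)$ is infinite: the kernel of $\Bir(M)\to\mathrm{O}(H^2(M,\ZZ))$ is finite by Lemma \ref{kernel}, and a Hodge isometry of $H^2(M,\ZZ)$ fixing $\mathrm{NS}(M)$ pointwise restricts on the transcendental lattice to an element of a finite group (a discrete subgroup of the compact group preserving the positive plane spanned by the real and imaginary parts of the holomorphic symplectic form together with its negative-definite complement), so $\Aut(M)\to\mathrm{O}(\mathrm{NS}(M))$ has infinite image; we pick $f\in\Aut(M)$ with $f^*|_{\mathrm{NS}(M)}$ of infinite order.

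Next I would analyze such an $f$. As $\mathrm{O}(q_M)$ contains no unipotents other than $\pm\mathrm{id}$, $f^*|_{\mathrm{NS}(M)}$ is $\RR$-diagonalizable with eigenvalues $\lambda>1>\lambda^{-1}>0$ on isotropic eigenlines; these eigenlines must be $R_1,R_2$, and they are irrational, since an integral matrix with an eigenvalue outside $\{\pm1\}$ has it irrational — this already yields part (1) in this case. On the interval $\mathbb{P}(\overline{\mathrm{Pos}}(M))$, $f$ acts as a homeomorphism with repelling and attracting fixed points $R_1,R_2$, so the $f$-orbit of an ample class accumulates at both $R_1$ and $R_2$; by convexity and $f$-invariance of $\overline{\mathrm{Amp}}(M)$ we get $\overline{\mathrm{Amp}}(M)=\overline{\mathrm{Pos}}(M)$, hence also $\overline{\mathrm{Mov}}(M)=\overline{\mathrm{Pos}}(M)$. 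Since the movable cone equals the ample cone, every birational self-map of $M$ is an isomorphism in codimension one whose pullback maps the ample cone into itself, hence is biregular; thus $\Bir(M)=\Aut(M)$. Finally, any $g\in\Aut(M)$ with $\mathrm{ord}(g)=\infty$ has $g^*|_{\mathrm{NS}(M)}$ of infinite order (the kernel above is finite), hence hyperbolic with spectral radius $>1$, so $\rho\bigl(g^*|_{H^2(M,\ZZ)}\bigr)\ge\rho\bigl(g^*|_{\mathrm{NS}(M)}\bigr)>1$, and Theorem \ref{firstdeg} gives $\htop(g)=\tfrac{1}{2}\dim M\cdot\log\rho\bigl(g^*|_{H^2(M,\ZZ)}\bigr)>0$.

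It remains to assemble the statements. The dichotomy immediately yields (1) (so $l_1$ is rational $\iff$ $l_2$ is rational), (2) (if $l_1$ is rational then $\Aut(M)$ cannot be infinite, hence is finite), and (3). In the rational case one can also verify (2) directly: $\Aut(M)$ permutes $\{l_1,l_2\}$, and the index-$\le 2$ subgroup fixing both rational rays acts on each by an integral unit, hence by $\pm1$ (using that $g$ preserves $q_M$, and that $\langle l_1,l_2\rangle\ne0$ when the $l_i$ are isotropic), so it has finite image in $\mathrm{O}(\mathrm{NS}(M))$, and Lemma \ref{kernel} together with the transcendental-lattice remark above finishes it. The main obstacle is precisely the cone-conjecture step in the second paragraph — the assertion that a finite automorphism group forces the nef cone to be rational polyhedral; the remaining ingredients are the signature-$(1,1)$ lattice geometry, the elementary dynamics of a hyperbolic isometry of a two-dimensional cone, and the cited Theorems \ref{kernel} and \ref{firstdeg}.
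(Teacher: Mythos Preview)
The paper does not prove Theorem~\ref{Oguiso}; it is quoted from \cite{Ogu1} without argument, so there is no in-paper proof to compare against. Your outline is a faithful reconstruction of Oguiso's strategy: reduce everything to the signature-$(1,1)$ geometry on $\mathrm{NS}(M)_{\RR}$, use the cone conjecture to force rationality of the nef boundary when $\Aut(M)$ is finite, and in the infinite case analyze a hyperbolic isometry of a rank-two lattice to obtain $\overline{\mathrm{Amp}}(M)=\overline{\mathrm{Pos}}(M)$ with irrational isotropic boundary and positive entropy via Theorem~\ref{firstdeg}.

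Your identification of the main obstacle is accurate: the implication ``$\Aut(M)$ finite $\Rightarrow$ nef cone rational polyhedral'' is exactly the Kawamata--Morrison cone conjecture for the nef cone, which for projective \hkmfds\ is a theorem of Amerik--Verbitsky (building on Markman for the movable cone). Two small points to tighten. First, the phrase ``$\mathrm{O}(q_M)$ contains no unipotents other than $\pm\mathrm{id}$'' is slightly off ($-\mathrm{id}$ is not unipotent); what you need, and what your argument actually shows, is that an infinite-order element of $\mathrm{O}(\mathrm{NS}(M))$ is hyperbolic with real eigenvalues $\lambda>1>\lambda^{-1}$, since complex eigenvalues of an integral $2\times 2$ matrix lying on the unit circle are roots of unity by Kronecker. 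Second, the preliminary claim that a boundary ray of $\overline{\mathrm{Amp}}(M)$ lying in the open positive cone must be rational is not needed once you run the dichotomy; it follows \emph{a posteriori}, and justifying it up front (via contracted extremal curves or MBM classes) draws on the same circle of ideas as the cone conjecture itself. With these clarifications your argument is correct and matches the approach of \cite{Ogu1}.
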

The following theorem due to Amerik and Vervitsky \cite{AV16} ensures the existence of hyperK\"ahler manifolds satisfying Theorem \ref{Oguiso} (3).

\begin{thm}$($\cite{AV16}$)$
Let $M$ be a (possibly non-projective) hyperK\"ahler manifold with $b_2(M)\geq 5$. Then $M$ admits a projective deformation $M^{\prime}$ with infinite group of symplectic automorphisms and Picard number two.
\end{thm}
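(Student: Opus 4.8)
\begin{plan}
The idea is to realise such an $M'$ as a deformation of $M$ whose N\'eron--Severi lattice is a carefully chosen rank two hyperbolic lattice, and then to read everything off from Theorem \ref{Oguiso}. Set $\Lambda:=H^2(M,\ZZ)$, an even lattice of signature $(3,b_2(M)-3)$; since $b_2(M)\geq 5$ its negative part has rank at least two, so there is room to embed rank two hyperbolic sublattices. The plan is to choose a primitive sublattice $L\subset\Lambda$ of signature $(1,1)$ with two properties: first, that $-\mathrm{disc}\,L$ is not a perfect square, so that $L$ represents no isotropic class over $\QQ$ and both boundary rays of its positive cone are irrational; second, that $L$ contains no class $\alpha$ whose orthogonal hyperplane $\alpha^{\perp}$ bounds the ample cone inside the positive cone — equivalently, that $L$ contains no monodromy--birationally--minimal class of $M$. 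The second property is where the work is: the squares of MBM classes are bounded by a constant depending only on the deformation type of $M$ (the earlier boundedness theorem of Amerik and Verbitsky underlying \cite{AV16}), so it suffices to choose $L$ representing none of the finitely many forbidden negative values. This can be arranged by imposing finitely many congruence conditions on $L$ — for instance, inside the one-parameter family of lattices with Gram matrix $\left(\begin{smallmatrix}2&1\\1&-2k\end{smallmatrix}\right)$ a positive proportion of $k$ work — compatibly with a primitive embedding $L\hookrightarrow\Lambda$ guaranteed by Nikulin's embedding criteria.

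Granting such an $L$, I would next produce the deformation. Let $T:=L^{\perp}\subset\Lambda$, of signature $(2,b_2(M)-4)$, and choose a period point $[\sigma]$ in the period domain with $\sigma\in T\otimes\CC$ and $\mathrm{Re}\,\sigma,\mathrm{Im}\,\sigma$ generic, so that $\sigma$ is orthogonal to no nonzero rational class of $T$. By surjectivity of the period map for \hkmfds \ there is a deformation $M'$ of $M$, of the same deformation type, with this period; then $\mathrm{NS}(M')=\sigma^{\perp}\cap\Lambda=L$, so $M'$ has Picard number two, and since $L$ is indefinite it represents a positive class, whence $M'$ is projective by Huybrechts' projectivity criterion.

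It remains to run the cone theory. Because $\mathrm{NS}(M')=L$ contains no MBM class, the ample cone of $M'$ fills out the whole positive cone: $\overline{\mathrm{Amp}}(M')=\overline{\mathrm{Pos}}(M')$, whose boundary rays are the light rays of $L$, hence irrational by the first property of $L$. Thus Theorem \ref{Oguiso}(3) applies to $M'$ and shows that $\mathrm{Aut}(M')=\mathrm{Bir}(M')$ is infinite and that every automorphism of infinite order has positive topological entropy. To see that infinitely many of these are symplectic, observe that $\mathrm{Aut}(M')$ acts on $\Lambda$ with finite kernel (Lemma \ref{kernel}), preserves $\mathrm{NS}(M')=L$ and hence $T=L^{\perp}$, and acts on $T$ by Hodge isometries; since $\sigma$ is generic, the group of Hodge isometries of $T$ is finite, so the automorphisms acting trivially on $T$ — that is, the symplectic ones — form a finite-index, hence infinite, subgroup of $\mathrm{Aut}(M')$. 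Choosing one of infinite order finishes the argument.

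The main obstacle is the lattice-theoretic step of the first paragraph: simultaneously arranging a primitive embedding $L\hookrightarrow\Lambda$, the non-square discriminant of $L$, and the avoidance of every MBM square-value. This is exactly where both $b_2(M)\geq 5$ (enough negative-definite room for $L$ and for its orthogonal complement) and the boundedness of MBM classes enter; the remainder is bookkeeping combining surjectivity of the period map with Theorem \ref{Oguiso}.
\end{plan}
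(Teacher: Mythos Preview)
The paper does not prove this theorem: it is stated twice (as Theorem~\ref{AV} in the introduction and again in Subsection~2.7) purely as a citation of \cite{AV16}, with no argument given. There is therefore no ``paper's own proof'' to compare your plan against.

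That said, your sketch is a faithful outline of the Amerik--Verbitsky strategy: use boundedness of MBM classes to manufacture a primitive rank-two hyperbolic sublattice $L\subset\Lambda$ avoiding all MBM squares and having non-square discriminant, invoke surjectivity of the period map to produce a deformation $M'$ with $\mathrm{NS}(M')=L$, and then read off irrational boundary rays and hence infinite $\mathrm{Aut}(M')$ from Theorem~\ref{Oguiso}(3). The symplecticity argument via the finite-index kernel on the transcendental lattice is also the standard one. The only caveat is that the lattice-embedding step and the MBM boundedness input you invoke are genuinely nontrivial theorems of \cite{AV16} and earlier Amerik--Verbitsky work, not consequences of anything in the present paper; your plan correctly flags this as the crux, but be aware that ``finitely many congruence conditions'' plus Nikulin is a summary rather than a proof.
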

\section{MAIN THEOREMS} 
\subsection{Derived automorphisms of positive entropy on K3 surfaces}
In this subsection, we will prove the existence of autoequivalences of derived categories of K3 surfaces with positive entropy.

\begin{thm}\label{posent}
Let $S$ be a K3 surface. Then there is an autoequivalence $\Phi \in \mathrm{Aut}(D^b(S))$ such that $h_{\mathrm{cat}}(\Phi) \geq  \log{\rho(\Phi^H)} >0$. 
\end{thm}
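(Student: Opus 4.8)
The plan is to produce the required autoequivalence explicitly as a composition of two spherical twists, and to read off the positivity of $\log\rho(\Phi^{H})$ from an elementary computation in the Mukai lattice. Since any K3 surface satisfies $H^{\mathrm{odd}}(S,\mathbb{C})=0$, the inequality $\hcat(\Phi)\ge\log\rho(\Phi^{H})$ recalled above applies to \emph{every} $\Phi\in\Aut(D^{b}(S))$, where $\Phi^{H}$ is the induced Hodge isometry of $H^{*}(S,\mathbb{Z})$; by Theorem \ref{torelli}, $\Phi^{H}$ lies in $\OHH(H^*(S,\ZZ))$ and in particular preserves the algebraic part $\widetilde{H}^{1,1}(S,\mathbb{Z})$. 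Hence it suffices to exhibit one autoequivalence $\Phi$ whose action on the Mukai lattice has spectral radius strictly larger than $1$.

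For this I would take $E_{1}=\mathcal{O}_{S}$ and $E_{2}=L$, where $L$ is a line bundle with $L^{2}=2d>0$ (for instance an ample one). Both are spherical objects of $D^{b}(S)$, with Mukai vectors $v_{1}=v(\mathcal{O}_{S})=(1,0,1)$ and $v_{2}=v(L)=(1,c_{1}(L),d+1)$, each of square $-2$, and one computes $\langle v_{1},v_{2}\rangle=-(d+2)$. Recall that the spherical twist $\mathrm{ST}_{E}\in\Aut(D^{b}(S))$ of a spherical object $E$ induces on $\widetilde{H}^{1,1}(S,\mathbb{Z})$ the reflection $s_{v(E)}(x)=x+\langle x,v(E)\rangle\,v(E)$ in the $(-2)$-class $v(E)$. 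So, setting $\Phi:=\mathrm{ST}_{E_{1}}\circ\mathrm{ST}_{E_{2}}$, we get $\Phi^{H}=s_{v_{1}}\circ s_{v_{2}}$.

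It then remains to compute the spectral radius of $s_{v_{1}}\circ s_{v_{2}}$. Each reflection, hence also $\Phi^{H}$, acts as the identity on the orthogonal complement $\mathcal{H}^{\perp}$ of the rank-two sublattice $\mathcal{H}=\langle v_{1},v_{2}\rangle\subset\widetilde{H}^{1,1}(S,\mathbb{Z})$, and $\mathcal{H}$ is nondegenerate and hyperbolic since its Gram determinant equals $4-(d+2)^{2}<0$, so $H^{*}(S,\mathbb{R})=\mathcal{H}_{\mathbb{R}}\oplus\mathcal{H}^{\perp}_{\mathbb{R}}$. On $\mathcal{H}_{\mathbb{R}}$, in the basis $(v_{1},v_{2})$ and using $v_{1}^{2}=v_{2}^{2}=-2$ and $\langle v_{1},v_{2}\rangle=-(d+2)$, a direct matrix computation shows that $s_{v_{1}}s_{v_{2}}$ has trace $(d+2)^{2}-2$ and determinant $1$; hence its eigenvalues are the roots of $\lambda^{2}-\bigl((d+2)^{2}-2\bigr)\lambda+1=0$, and since $(d+2)^{2}-2\ge 7$ the larger root is a real number $\lambda>1$. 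Therefore $\rho(\Phi^{H})=\lambda>1$ and $\hcat(\Phi)\ge\log\rho(\Phi^{H})=\log\lambda>0$, proving the theorem.

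This argument is essentially routine; the one point that requires care is to choose the two spherical objects so that $|\langle v_{1},v_{2}\rangle|>2$, since this is exactly what makes $s_{v_{1}}s_{v_{2}}$ hyperbolic rather than elliptic and forces an eigenvalue off the unit circle — and the smallest possible polarization degree $2d=2$ already suffices. One could instead argue non-constructively, using Theorem \ref{torelli} together with Proposition \ref{Huysurj} to reduce to finding any isometry of spectral radius $>1$ in the infinite group $\OHH(H^*(S,\ZZ))$, but the explicit construction above has the advantage of exhibiting a concrete autoequivalence.
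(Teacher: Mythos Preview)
Your proof is correct and follows the same overall strategy as the paper: construct $\Phi$ as the product of two spherical twists whose Mukai vectors span a hyperbolic rank-two sublattice, so that the composite reflection acts with spectral radius $>1$. The difference is in the choice of the second spherical object. The paper takes a Gieseker-stable sheaf with Mukai vector $(d+1,dh,d^{2}-d+1)$, whose existence it justifies by invoking Yoshioka's non-emptiness theorem, and then computes $\Phi^{H}$ on the rank-three sublattice $H^{0}\oplus\mathbb{Z}h\oplus H^{4}$. Your choice of a line bundle $L$ with $L^{2}=2d>0$ is more elementary: any line bundle on a K3 surface is spherical for free, so no appeal to moduli theory is needed, and the computation reduces cleanly to a $2\times 2$ trace--determinant argument on $\langle v_{1},v_{2}\rangle$. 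Both choices give $|\langle v_{1},v_{2}\rangle|\ge 3$, which is exactly the hyperbolicity condition you identify; the paper's larger pairing $d^{2}+2$ yields a larger entropy bound but is otherwise not needed for the bare positivity statement.
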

\begin{proof}
We will construct $\Phi \in \mathrm{Aut}(D^b(S))$ as a composition of two spherical twists.
The first spherical twist is defined by $\Phi_1 := \mathrm{ST}_{\mathcal{O}_S}$.
We will define the second spherical twist $\Phi_2$. Take an ample divisor $h$ on $S$ and write $h^2=2d$. Consider a $(-2)$-class \[w:=(d+1,dh,d^2-d+1) \in  \widetilde{H}^{1,1}(S,\mathbb{Z}).\]  
Take a general ample divisor $H$ on $S$ with respect to $w$.  Then there is a $H$-stable sheaf $E$ with Mukai vector $v(E)=w$ by non-emptyness result (\cite{Yos99}, Theorem 0.1). 
Since $w^2=-2$,  $E$ is a spherical object. We define the second spherical twist by $\Phi_2:=\mathrm{ST}_{E}$. We set $\Phi := \Phi_1 \circ \Phi_2 \in  \mathrm{Aut}(D^b(S))$.
Consider a sublattice $L$ of the Mukai lattice defined by 
\[ L:=H^0(S,\mathbb{Z}) \oplus \mathbb{Z}h \oplus H^4(S,\mathbb{Z}). \]
The representation matrices of $\Phi_1^H|_L,\Phi_2^H|_L$ with respect to $(1,0,0), (0,h,0),(0,0,1) \in L$ are as follow:
\[\Phi_1^H|_L=\begin{pmatrix}0 & 0 & -1\cr 0 & 1 & 0\cr -1 & 0 & 0\end{pmatrix}, \Phi_2^H|_L=\begin{pmatrix}-d^3 & 2d^2(d+1) & -(d+1)^2\cr -d(d^2-d+1) & 2d^3+1 & -d(d+1)\cr -(d^2-d+1) & 2d^2(d^2-d+1) & -d^3\end{pmatrix}.\]
So we have
\[ \Phi^H|_L=\begin{pmatrix}(d^2-d+1)^2 & -2d^2(d^2-d+1) & d^3\cr -d(d^2-d+1) & 2d^3+1 & -d(d+1)\cr d^3& -2d(d+1) & (d+1)^2\end{pmatrix}.\]
The eigen values of  $\Phi^H|_L$ are 
\[1, \frac{d^4+4d^2 \pm (d^3+2d) \sqrt{d^2+4}}{2}.\]
Since $d \geq 1$, we have
\[ h_\mathrm{cat}(\Phi) \geq \log{\rho(\Phi^H)} \geq \log{\frac{7+3\sqrt{5}}{2}}>0.\]
\end{proof}
\begin{rem}
Let $S$ be a K3 surface with $\mathrm{NS}(S)=\mathbb{Z}h$ and $h^2=2d$. If $d=1$, then $\mathrm{Aut}(S)=\mathbb{Z}/2\mathbb{Z}$. If $d>1$, then $\mathrm{Aut}(S)=1$. However, there is an autoequivalence $\Phi \in \mathrm{Aut}(D^b(S))$ with $h_{\mathrm{cat}}(\Phi) \geq \log \rho(\Phi^H)>0$ by Theorem \ref{posent}. 
\end{rem}

In the next subsection, we will discuss the relation between autoequivalences on K3 surfaces and automorphisms of moduli spaces of stable objects. Unfortunately, there is an autoequivalence on a K3 surface with positive entropy such that it does not induce automorphisms of moduli spaces of stable objects.

\begin{ex}\label{counter}
Let $S$ be a K3 surface with $\mathrm{NS}(S)=\mathbb{Z}h$ and $h^2=4$. Consider a Mukai vector $v:=(1,0,-1)$. Then there is an autoequivalence $\Phi \in \mathrm{Aut}(D^b(S))$ such that $h_{\mathrm{cat}}(\Phi) \geq \log \rho(\Phi^H) >0$ and $\Phi^H(v)=v$.  Take a $v$-generic stability condition $\sigma \in \mathrm{Stab}^*(S)$.  Then the moduli space $M_\sigma(v)$ is birational to the Hilbert scheme $\mathrm{Hilb}^2(S)$ of two points on $S$. So $\mathrm{Bir}(M_\sigma(v))$ is finite by Proposition \ref{hilb}. In particular, $\Phi$ does not induce an automorphism on $M_\sigma(v)$.
\end{ex}
\begin{proof}
Consider a $(-2)$-class $w:=(3,2h,3) \in \widetilde{H}^{1,1}(S,\mathbb{Z})$. This is used in the proof of Theorem \ref{posent}.  
So we can take a spherical object $E \in D^b(S)$ with Mukai vector $v(E)=w$. By Theorem \ref{posent}, $\Phi:=\mathrm{ST}_{\mathcal{O}_S} \circ \mathrm{ST}_E$ satisfies $h_{\mathrm{cat}}(\Phi) \geq \log \rho(\Phi^H) >0$.
Since $v=(1,0,-1)$ is orthogonal to $v(\mathcal{O}_S)=(1,0,1)$ and $w$, we have $\Phi^H(v)=v$. 
\end{proof}
\subsection{Crossing fake walls}
In this subsection, we discuss the relation between autoequivalences of K3 surfaces and automorphisms of moduli spaces of stable objects. Let $S$ be a K3 surface.  First, we prove that automorphisms of moduli space of stable objects with infinite order induce autoequivalences on $S$.

\begin{prop}\label{infiniteorder}
Let $v \in \widetilde{H}^{1,1}(S,\mathbb{Z})$ be a primitive Mukai vector with $v^2>0$.  Let $\sigma \in \mathrm{Stab}^*(S)$ be a $v$-generic stability condition. If $\phi \in \mathrm{Aut}(M_\sigma(v))$ is an automorphism of infinite order, then there is a positive integer $n>0$ and $\Phi \in \mathrm{Aut}^*(D^b(S))$ such that  $\Phi^H(v)=v$ and the following diagram commutes.

\[\xymatrix{v^\perp \ar[r]^{\Phi^H} \ar[d]_{\theta_{\sigma,v}} & v^\perp \ar[d]_{\theta_{\sigma,v}} \\ H^2(M_\sigma(v),\mathbb{Z}) \ar[r]^{\phi^{*n}} & H^2(M_\sigma(v),\mathbb{Z})} \]

 Here, we take the orthogonal $v^{\perp}$ in $H^*(S,\mathbb{Z})$ and all homomorphisms are Hodge isometries.
\end{prop}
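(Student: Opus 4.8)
The plan is to transport the $\phi^*$-action on $H^2(M_\sigma(v),\ZZ)$ back to the Mukai lattice through the Mukai homomorphism, extend a suitable power of it to a Hodge isometry of the full Mukai lattice fixing $v$, and then realize that isometry by an autoequivalence using Huybrechts' surjectivity (Proposition \ref{Huysurj}).

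First I would upgrade Proposition \ref{Mukai iso}: for $v$ primitive with $v^2>0$ the Mukai homomorphism $\theta_{\sigma,v}\colon v^\perp \to H^2(M_\sigma(v),\ZZ)$ is an \emph{isomorphism} of lattices, not merely an isometric embedding. Indeed both sides have rank $23$, and both discriminant groups have order $v^2$: on the Mukai side $|d(v^\perp)|=|d(\ZZ v)|=v^2$ since $H^*(S,\ZZ)$ is unimodular, and on the moduli side $|d(H^2(M_\sigma(v),\ZZ))|=2(n-1)=v^2$ because $\dim M_\sigma(v)=2n=2+v^2$ by Theorem \ref{moduli}; an isometric embedding of lattices of equal rank with images of equal discriminant must be onto. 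Hence I may set $g:=\theta_{\sigma,v}^{-1}\circ\phi^*\circ\theta_{\sigma,v}\in\mathrm{O}(v^\perp)$, and since $\theta_{\sigma,v}$ and $\phi^*$ are Hodge isometries and $\widetilde{H}^{2,0}(S)\subset v^\perp\otimes\CC$ ($v$ being of type $(1,1)$), the isometry $g$ is a Hodge isometry of $v^\perp$.

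Next I would extend a power of $g$ to $H^*(S,\ZZ)$. As $v$ is primitive with $v^2\neq 0$, $\ZZ v$ is a primitive sublattice of the even unimodular lattice $H^*(S,\ZZ)$ with $(\ZZ v)^\perp=v^\perp$ and $d(v^\perp)\cong \ZZ/v^2\ZZ$. The finite group $\mathrm{O}(d(v^\perp))$ contains the image $\bar g$ of $g$, so $\bar g$ has some finite order $n>0$ and $\overline{g^n}=1$. By Nikulin's lemma (Lemma \ref{Nik}) there is a lift $\widetilde{g}\in\mathrm{O}(H^*(S,\ZZ))$ with $\widetilde{g}|_{v^\perp}=g^n$ and $\widetilde{g}|_{\ZZ v}=\mathrm{id}$; in particular $\widetilde{g}(v)=v$, and since $g^n$ is a Hodge isometry of $v^\perp$ and $v\in\widetilde{H}^{1,1}(S,\ZZ)$, the isometry $\widetilde{g}$ preserves the Hodge structure $\widetilde{H}(S)$. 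The step that needs genuine care is showing $\widetilde{g}\in\OHH(H^*(S,\ZZ))$, so that Proposition \ref{Huysurj} applies: every positive-definite $4$-space in $H^*(S,\RR)$ can be written $\RR v\oplus W$ with $W\subset v^\perp\otimes\RR$ a positive $3$-space, so an orientation of positive $4$-spaces is the product of an orientation of $\RR v$ with an orientation of positive $3$-spaces in $v^\perp\otimes\RR$. Since $\widetilde{g}$ fixes $v$ it preserves the first factor; on the second it acts by $g^n=\theta_{\sigma,v}^{-1}\phi^{*n}\theta_{\sigma,v}$, which preserves the orientation of positive $3$-spaces of $v^\perp$ if and only if $\phi^{*n}$ does on $H^2(M_\sigma(v),\RR)$, since whatever $\theta_{\sigma,v}$ does to orientations its two occurrences cancel. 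Finally $\phi^{*n}$ does preserve that orientation, because any automorphism of a \hkmfd{} preserves the positive cone and hence acts on $H^2$ through the subgroup $\mathrm{O}^+$ of isometries preserving the orientation of positive $3$-spaces. Thus $\widetilde{g}\in\OHH(H^*(S,\ZZ))$.

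By Proposition \ref{Huysurj} there is $\Phi\in\Aut^*(D^b(S))$ with $\Phi^H=\widetilde{g}$. Then $\Phi^H(v)=v$, so $\Phi^H$ preserves $v^\perp$, and $\Phi^H|_{v^\perp}=g^n=\theta_{\sigma,v}^{-1}\circ\phi^{*n}\circ\theta_{\sigma,v}$; rearranging gives $\theta_{\sigma,v}\circ\Phi^H=\phi^{*n}\circ\theta_{\sigma,v}$ on $v^\perp$, which is precisely the asserted commutative square, and all four maps are Hodge isometries by construction. The main obstacle, as indicated, is controlling the orientation (the $\OHH$-condition) of the lifted isometry; the passage to a power $n$ is forced exactly by the discriminant obstruction in Nikulin's lemma, and the hypothesis that $\phi$ has infinite order is not actually needed for this statement (it matters only for the applications, where, combined with Lemma \ref{kernel}, it guarantees that $\phi^{*n}$ also has infinite order).
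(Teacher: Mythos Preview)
Your argument is correct and follows the same outline as the paper: transport $\phi^*$ to $v^\perp$ via the Mukai isomorphism $\theta_{\sigma,v}$, kill the discriminant action by passing to a power, lift via Nikulin's lemma, and realize the lift by Proposition~\ref{Huysurj}. The one substantive difference is how the $\OHH$-condition is obtained. The paper simply uses that $[\OH:\OHH]=2$, so the square $\psi^2$ of any Hodge isometry $\psi$ lies in $\OHH$ automatically, and sets $n=2m$; you instead argue geometrically that the lift $\widetilde g$ is already orientation-preserving, using that an automorphism of a \hkmfd{} preserves the K\"ahler cone (and rotates the $(2,0)$-plane) and hence acts on $H^2$ through $\mathrm{O}^+$. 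Your route gives a sharper $n$; the paper's squaring trick is shorter and sidesteps the orientation analysis entirely. One phrasing to tighten: not \emph{every} positive $4$-space in $H^*(S,\RR)$ contains $v$, but it suffices to test orientation on a single positive $4$-space of the form $\RR v\oplus W$, which is all your argument needs. Your closing remark that the infinite-order hypothesis is unused in the proof itself is also correct; the paper records $\mathrm{ord}(g)=\infty$ via Lemma~\ref{kernel}, but this is for downstream use (e.g.\ in Corollary~\ref{maincor}) rather than in the extension argument.
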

\begin{proof}
Consider the composition $g:=\theta_{\sigma, v}^{-1} \circ \phi^* \circ \theta_{\sigma, v}$. By lemma \ref{kernel}, we have $\mathrm{ord}(g)=\infty$. Since $d(v^{\perp})$ is a finite group, there is a positive integer $m>0$ such that $\overline{g^m}=1$. So we can take a Hodge isometry  $\psi \in \OH(H^*(S,\ZZ))$ such that $\psi|_{v^{\perp}}=g^{m}, \psi(v)=v$ by Lemma \ref{Nik}.  Then $\psi^2$ preserves positive definite $4$-spaces in $H^*(S,\mathbb{R})$. By Proposition \ref{Huysurj}, there is an autoequivalence $\Phi \in  \mathrm{Aut}^*(D^b(S))$ such that $\Phi^H=\psi^2$. Putting $n:=2m$, we have the commutative diagram in the statement.
\end{proof}
By Example \ref{counter}, an autoequivalence of infinite order does not necessarily induce an automorphism of a moduli space of stable objects. We will consider the converse problem in Theorem \ref{converse} . First, we show the following proposition.

\begin{prop}\label{commute}
Let $v \in \widetilde{H}^{1,1}(S,\mathbb{Z})$ be a primitive Mukai vector with $v^2>0$.  Consider an autoequivalence $\Phi \in \mathrm{Aut}(D^b(S))$.  Let $\sigma \in \mathrm{Stab}^*(S)$ be a $v$-generic stability condition and $\tau \in \mathrm{Stab}^*(S)$ be a $\Phi^H(v)$-generic stability condition. If $\Phi$ induces an isomorphism $\Phi : M_\sigma(v) \to M_\tau(\Phi^H(v))$ defined by $[E] \mapsto [\Phi(E)]$, then we have the following commutative diagram:
\[\xymatrix{v^{\perp} \ar[d]_{\theta_{\sigma,v}} \ar[r]^{\Phi^H} & v^\perp  \ar[d]_{\theta_{\tau, v}} \\
H^2(M_\sigma(v),\mathbb{Z}) \ar[r]^{\Phi_*} & H^2(M_{\tau}(v),\mathbb{Z})}  \]
\end{prop}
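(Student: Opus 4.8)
The plan is to reduce the statement to the compatibility of the Mukai homomorphism with Fourier--Mukai kernels, which is essentially the functoriality of $\theta$ recorded in Proposition \ref{Mukai iso}. First I would fix quasi-universal families: let $\mathcal{E} \in D^b(M_\sigma(v) \times S)$ be a quasi-universal family for $M_\sigma(v)$ of similitude $\rho$, and let $\mathcal{F} \in D^b(M_\tau(\Phi^H(v)) \times S)$ be one for $M_\tau(\Phi^H(v))$ of similitude $\rho'$. Writing $P \in D^b(S \times S)$ for the Fourier--Mukai kernel of $\Phi$, the key observation is that $(\mathrm{id}_{M} \times \Phi)(\mathcal{E}) = (\Phi \times \mathrm{id}_S)_*\mathcal{E}$, computed by convolving $\mathcal{E}$ with $P$, is again a quasi-family of $\tau$-stable objects on $M_\sigma(v) \times S$ whose fibre over $[E]$ is $\Phi(E)^{\oplus \rho}$; since the isomorphism $\Phi : M_\sigma(v) \xrightarrow{\sim} M_\tau(\Phi^H(v))$ is given on points by $[E] \mapsto [\Phi(E)]$, the pullback $(\Phi \times \mathrm{id}_S)^*\mathcal{F}$ is a quasi-family on $M_\sigma(v) \times S$ with the same fibres up to multiplicity, hence equivalent to the convolution of $\mathcal{E}$ with $P$ up to twisting by a vector bundle pulled back from $M_\sigma(v)$. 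By the uniqueness-up-to-equivalence of quasi-universal families, these two quasi-families agree in the relevant sense.

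Next I would chase cohomology. The cohomological Fourier--Mukai transform $\Phi^H_{\mathcal{F}\text{-convolved}}$ associated to the convolution of $\mathcal{E}$ with $P$ factors as $\Phi^H_{\mathcal{E}} \circ \Phi^H_P = \Phi^H_{\mathcal{E}} \circ \Phi^H$ on the $S$-factor (up to the degree bookkeeping in $[-]_2$), while the one associated to $(\Phi \times \mathrm{id}_S)^*\mathcal{F}$ is $(\Phi_* \otimes \mathrm{id}) \circ \Phi^H_{\mathcal{F}}$ after identifying $H^2(M_\sigma(v),\ZZ)$ with $H^2(M_\tau(\Phi^H(v)),\ZZ)$ via $\Phi_* = (\Phi^{-1})^*$. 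Restricting to $v^\perp$ and extracting the degree-two part, the definition $\theta_{\sigma,v}(w) = \frac{1}{\rho}[\Phi^H_{\mathcal{E}}(w)]_2$ together with $\theta_{\tau,v}(w') = \frac{1}{\rho'}[\Phi^H_{\mathcal{F}}(w')]_2$ turns the equality of the two cohomological transforms into the assertion $\Phi_* \circ \theta_{\sigma,v} = \theta_{\tau,v} \circ \Phi^H$ on $v^\perp$, which is exactly the claimed commutative diagram. Note that $\Phi^H$ does carry $v^\perp$ isometrically to $(\Phi^H(v))^\perp = v^\perp$ (where the last equality is the notational convention of the statement, i.e. one writes $v^\perp$ for $\Phi^H(v)^\perp$ after the identification), so all maps in the square are well-defined Hodge isometries.

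The main obstacle I anticipate is purely technical bookkeeping rather than conceptual: one must be careful that the convolution of a quasi-family with the kernel $P$ is genuinely a quasi-family of \emph{stable} objects (this uses the hypothesis that $\Phi$ induces an honest isomorphism $M_\sigma(v) \to M_\tau(\Phi^H(v))$, not merely a birational map, so that $\Phi(E)$ is $\tau$-stable for \emph{every} $[E] \in M_\sigma(v)$), and that the similitudes and the auxiliary vector bundles appearing in the equivalence of quasi-families do not disturb the normalized classes $\frac{1}{\rho}[-]_2$ — indeed $\theta$ is insensitive to these by the last sentence of Proposition \ref{Mukai iso}. A secondary point requiring care is the handling of the possibly contravariant case; but as stated $\Phi$ is a genuine (covariant) autoequivalence, so this does not arise here. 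Once the family-level identity is in place, the cohomological statement follows formally from the compatibility of cohomological Fourier--Mukai transforms with composition of kernels.
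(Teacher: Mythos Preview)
Your proposal is correct and follows essentially the same strategy as the paper: transport the quasi-universal family through $\Phi$ and invoke the independence of the Mukai homomorphism $\theta$ from the choice of quasi-universal family (Proposition~\ref{Mukai iso}). The only cosmetic difference is that the paper pushes forward to $M_\tau(\Phi^H(v))$---defining $\mathcal{F}$ as the kernel of $\Phi\circ\Phi_{\mathcal{E}}\circ\phi^*$ and then checking directly that the comparison map $f$ to a chosen quasi-universal family $\mathcal{E}'$ is the identity---whereas you pull back to $M_\sigma(v)$ and compare the convolution $(\mathrm{id}\times\Phi)(\mathcal{E})$ with $(\phi\times\mathrm{id}_S)^*\mathcal{F}$; these are mirror images of the same argument.
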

The following theorem is deduced from Proposition \ref{commute}.
\begin{thm}\label{converse}
Let $v \in \widetilde{H}^{1,1}(S,\mathbb{Z})$ be a primitive Mukai vector with $v^2>0$. Let $\sigma \in \mathrm{Stab}^*(S)$ be a $v$-generic stability condition. Consider an autoequivalence $\Phi \in \mathrm{Aut}(D^b(S))$ such that $\Phi_*\sigma \in \mathrm{Stab}^*(S)$ and $\Phi^H(v)=v$.  If there is a generic path $\gamma : [0,1] \to \mathrm{Stab}^*(S)$ crossing only fake walls with $\gamma(0)=\Phi_*\sigma$ and $\gamma(1)=\sigma$, then there is $\Phi_\gamma \in \mathrm{Aut}^0(D^b(S))$ such that $\Phi_\gamma \circ \Phi$ induces an automorphism $\Phi_\gamma \circ \Phi : M_\sigma(v) \to M_\sigma(v)$ satisfying the following commutative diagram:
\[\xymatrix{v^{\perp} \ar[d]_{\theta_{\sigma,v}} \ar[r]^{\Phi^H} & v^\perp \ar[r]^{\Phi_\gamma^H=1} \ar[d]_{\theta_{\Phi_*\sigma, v}} &v^\perp \ar[d]_{\theta_{\sigma,v}} \\
H^2(M_\sigma(v),\mathbb{Z}) \ar[r]^{\Phi_*} & H^2(M_{\Phi_*\sigma}(v),\mathbb{Z}) \ar[r]^{\Phi_{\gamma*}} & H^2(M_\sigma(v),\mathbb{Z})}\]
Here, all homomorphisms are Hodge isometries and we take the orthogonal complement $v^\perp$ in $H^*(S,\mathbb{Z})$.
\end{thm}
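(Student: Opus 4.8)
The plan is to realize the desired automorphism of $M_\sigma(v)$ as a composition $\Phi_\gamma\circ\Phi$, in which $\Phi$ absorbs the change of stability condition from $\sigma$ to $\Phi_*\sigma$, while $\Phi_\gamma$ is assembled from the wall-crossing autoequivalences of the fake walls met by $\gamma$ and transports the moduli space back across $\gamma$.

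First, I would record that $\Phi$ already induces an isomorphism $M_\sigma(v)\to M_{\Phi_*\sigma}(v)$, $[E]\mapsto[\Phi(E)]$: an equivalence carries $\sigma$-(semi)stable objects to $\Phi_*\sigma$-(semi)stable objects and acts on Mukai vectors by $\Phi^H$, and since $\Phi^H(v)=v$ it preserves the class $v$; moreover $\Phi_*\sigma$ is $v$-generic because $\sigma$ is (the equivalence matches strictly semistable objects of class $v$ on the two sides), and $\Phi_*\sigma\in\mathrm{Stab}^*(S)$ by hypothesis, so $M_{\Phi_*\sigma}(v)$ is a hyperK\"ahler manifold by Theorem \ref{moduli}. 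Proposition \ref{commute}, applied with $\tau=\Phi_*\sigma$, then gives the left-hand square of the diagram: $\theta_{\Phi_*\sigma,v}\circ\Phi^H=\Phi_*\circ\theta_{\sigma,v}$.

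Next, I would cut $\gamma$ into elementary crossings. Since the wall-and-chamber decomposition for the fixed primitive class $v$ is locally finite and $\gamma([0,1])$ is compact, $\gamma$ meets only finitely many walls, and by genericity it crosses them transversally at parameters $0<t_1<\cdots<t_k<1$; all the $\gamma(s)$ lie in $\mathrm{Stab}^*(S)$, so every intermediate moduli space is defined. Let $C_0,\dots,C_k$ be the successive chambers, so that $\Phi_*\sigma\in C_0$, $\sigma\in C_k$, $M_{\Phi_*\sigma}(v)=M_{C_0}(v)$ and $M_\sigma(v)=M_{C_k}(v)$. Each crossed wall $\mathcal{W}_i$ is fake, so by the classification of walls (Theorem \ref{wall}) and the wall-crossing theorem of Bayer--Macri (\cite{BM13}) there is a covariant autoequivalence $\Psi_i\in\mathrm{Aut}^0(D^b(S))$ inducing an isomorphism $M_{C_{i-1}}(v)\to M_{C_i}(v)$, $[E]\mapsto[\Psi_i(E)]$; here one uses that stability conditions near $\mathcal{W}_i$ in the two adjacent chambers have the same stable objects as $C_{i-1}$, $C_i$, and that $\Psi_i$, lying in $\mathrm{Aut}^0(D^b(S))$, fixes $v$. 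Put $\Phi_\gamma:=\Psi_k\circ\cdots\circ\Psi_1$; it lies in $\mathrm{Aut}^0(D^b(S))$ since the latter is the kernel of $(-)^H$, the composite isomorphism $M_{\Phi_*\sigma}(v)\to M_\sigma(v)$ is $[E]\mapsto[\Phi_\gamma(E)]$, and $\Phi_\gamma^H=1$.

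Finally, $\Phi_\gamma\circ\Phi$ induces the desired automorphism of $M_\sigma(v)$. The right-hand square comes from Proposition \ref{commute} applied again, now to $\Phi_\gamma$ with the roles of $\sigma$ and $\tau$ taken by $\Phi_*\sigma$ and $\sigma$ (valid since $\Phi_\gamma^H(v)=v$): it gives $\theta_{\sigma,v}\circ\Phi_\gamma^H=\Phi_{\gamma*}\circ\theta_{\Phi_*\sigma,v}$, which, as $\Phi_\gamma^H=1$ on $v^\perp$, is exactly the right square. Pasting the two squares yields the displayed diagram; all arrows are Hodge isometries — the $\theta$'s by Proposition \ref{Mukai iso}, $\Phi^H$ as a Hodge isometry of the Mukai lattice, and $\Phi_*$, $\Phi_{\gamma*}$ as induced by isomorphisms of hyperK\"ahler manifolds. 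I expect the delicate point to be this middle step: the bookkeeping needed to see that the per-wall isomorphisms compose to exactly the map induced by $\Phi_\gamma$ and that the intermediate moduli spaces match correctly along adjacent chambers; conceptually there is no real obstacle, since the classification of fake walls, the associated wall-crossing autoequivalences, and Proposition \ref{commute} are precisely the inputs already at hand.
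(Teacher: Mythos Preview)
Your proposal is correct and follows precisely the route the paper intends: the paper does not spell out a separate proof of Theorem \ref{converse} but simply declares it ``deduced from Proposition \ref{commute},'' and your argument supplies exactly the expected details---use $\Phi$ to pass from $M_\sigma(v)$ to $M_{\Phi_*\sigma}(v)$, compose the fake-wall autoequivalences in $\mathrm{Aut}^0(D^b(S))$ along $\gamma$ to build $\Phi_\gamma$, and invoke Proposition \ref{commute} twice for the two squares. There is no substantive difference in approach.
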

\begin{proof}[Proof of Proposition \ref{commute}]
Let $\mathcal{E} \in D^b(M_\sigma(v) \times S)$ be a quasi-universal family of similitude $\rho$. Let $\mathcal{F} \in D^b(M_\tau(\Phi^H(v)) \times S)$ be a Fourier-Mukai kernel of the composition $\Phi\Phi_\mathcal{E}\phi^* : D^b(M_\tau(\Phi^H(v))) \to D^b(S)$ of Fourier-Mukai functors. Here, we denote the isomorphism $\Phi  : M_\sigma(v) \to M_\tau(\Phi^H(v))$ by $\phi : M_\sigma(v) \to M_\tau(\Phi^H(v))$. It is enough to show that $\mathcal{F}$ is a quasi-universal family of $M_\tau(\Phi^H(v))$ by Proposition \ref{Mukai iso}. We can prove that $\mathcal{F}$ is a quasi-family. In fact, for any point $[E] \in M_\tau(\Phi^H(v))$, we have $\Phi_{\mathcal{F}}(\mathcal{[E]}) \simeq E^{\oplus \rho}$  by the definition of $\mathcal{F} \in D^b(M_\tau(\Phi^H(v)) \times S)$.
Take a quasi-universal family $\mathcal{E}^{\prime} \in D^b(M_\tau(\Phi^H(v)) \times S)$ of similitude  $\rho^{\prime}$. Then there exists a unique morphism $f : M_\tau(\Phi^H(v)) \to M_\tau(\Phi^H(v))$ such that $\mathbf{L}(f \times \mathrm{id}_S)^* \mathcal{E}^{\prime}$ and $\mathcal{F}$ are equivalent. So there are vector bundles $V_1$ and $V_2$ on $M_\tau(\Phi^H(v))$ such that $\mathbf{L}(f \times \mathrm{id}_S)^* \mathcal{E}^{\prime} \otimes p^*V_1 \simeq \mathcal{F} \otimes p^*V_2$, where $p :M_\tau(\Phi^H(v)) \times S \to M_\tau(\Phi^H(v))$ is the projection. Let $[E] \in M_\tau(\Phi^H(v))$ be a point. We have the isomorphisms
\begin{align*}
E^{\rho \cdot \mathrm{rk}V_2}& \simeq (\mathcal{F} \otimes p^*V_2)|_{[E] \times S} \\
                     & \simeq (\mathbf{L}(f \times \mathrm{id}_S)^* \mathcal{E}^{\prime} \otimes p^*V_1)|_{[E] \times S}\\
                     &\simeq \mathbf{L}(f \times \mathrm{id}_S)^* \mathcal{E}^{\prime}|_{[E]\times S}^{\oplus \mathrm{rkV_1}} \\
                     &\simeq (\mathbf{L}(f \times \mathrm{id}_S)^* \mathcal{E}^{\prime} \otimes p^*f^*V_1)|_{[E] \times S} \\
                     &\simeq (\mathbf{L}(f \times \mathrm{id}_S)^* \mathcal{E}^{\prime} \otimes \mathbf{L}(f \times \mathrm{id}_S)^*p^*V_1)|_{[E] \times S} \\
                     &\simeq \mathbf{L}(f \times \mathrm{id}_S)^*(\mathcal{E}^{\prime} \otimes p^*V_1)|_{[E] \times S} \\
                     &\simeq f([E])^{\oplus \rho^{\prime} \cdot \mathrm{rk}V_1}.                   
\end{align*}
The last isomorphism is due to the following commutative diagram:
\[\xymatrix{[E] \times S \ar[d] \ar[r]^{f \times \mathrm{id}_S} & f([E]) \times S \ar[d] \\ M_\tau(\Phi^H(v)) \times S \ar[r]^{f \times \mathrm{id}_S} & M_\tau(\Phi^H(v)) \times S } \]
Since $E$ and $f(E)$ are $\tau$-stable in the same phase, we have $f(E) \simeq E$ and $\rho \cdot \mathrm{rk}V_1=\rho \cdot \mathrm{rk}V_2$. So we have $f=\mathrm{id}_S$. Hence, we have proved that $\mathcal{F}$ is a quasi-universal family of $M_\tau(\Phi^H(v))$.
\end{proof}

\begin{cor}\label{maincor}
Let $S$ be a K3 surface of Picard number one. Let $v \in  \widetilde{H}^{1,1}(S,\mathbb{Z})$ be a primitive Mukai vector with $v^2>0$. If there does not exist a vector $a  \in  \widetilde{H}^{1,1}(S,\mathbb{Z})$ such that  $-2 \leq a^2 <v^2/4$ and $0 \leq \langle v, a \rangle \leq v^2/2$, then there is an autoequivalence $\Phi \in \mathrm{Aut}(D^b(S))$ such that $\Phi^H(v)=v$ and $\log\rho(\Phi^H)>0$.  In particular, for any $v$-generic stability condition $\sigma \in \mathrm{Stab}^*(S)$, there exist an autoequivalence $\Psi \in \mathrm{Aut}^0(D^b(S))$ such that $\phi:=\Psi\Phi : M_\sigma(v) \to M_\sigma(v)$ is an automorphism with 
\[ \frac{1}{2}\mathrm{dim}M_\sigma(v) \cdot h_{\mathrm{cat}}(\Phi) \geq h_{\mathrm{top}}(\phi)>0. \]
\end{cor}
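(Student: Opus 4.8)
The plan is to deduce everything from the nef-cone description (Theorem \ref{nefcone}), Theorem \ref{posent}, Theorem \ref{converse}, and the entropy inequality of Theorem \ref{DHKK}-type together with Theorem \ref{firstdeg}. First I would analyze the hypothesis: the nonexistence of a vector $a \in \widetilde{H}^{1,1}(S,\mathbb{Z})$ with $-2 \le a^2 < v^2/4$ and $0 \le \langle v,a\rangle \le v^2/2$ says, via Theorem \ref{nefcone}, that the chamber decomposition of $\overline{\mathrm{Pos}}(M_\sigma(v))$ has no interior walls, so $\overline{\mathrm{Amp}}(M_\sigma(v)) = \overline{\mathrm{Pos}}(M_\sigma(v))$. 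Equivalently, by Theorem \ref{wall}, every wall with respect to $v$ in $\mathrm{Stab}^*(S)$ is a fake wall: indeed a divisorial or flopping wall would, by Theorem \ref{wall}(a)(b), produce a class $a \in \mathcal{H}_{\mathcal{W}}$ with $-2 \le a^2$, $0 < \langle v,a\rangle$, and $a^2 < v^2/4$, $\langle v,a\rangle \le v^2/2$ (the bound $a^2 < v^2/4$ being exactly the hyperbolicity constraint noted in the remark after Theorem \ref{nefcone}); one checks the isotropic Hilbert–Chow and Li–Gieseker–Uhlenbeck cases and the $(-2)$ and $a+b$ cases each supply such an $a$. So all walls are fake, and any $v$-generic path can only cross fake walls.

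Next I would produce the autoequivalence. Since $S$ has Picard number one, write $\mathrm{NS}(S) = \mathbb{Z}h$; Theorem \ref{posent} gives $\Phi_0 := \mathrm{ST}_{\mathcal{O}_S} \circ \mathrm{ST}_E \in \mathrm{Aut}(D^b(S))$ with $h_{\mathrm{cat}}(\Phi_0) \ge \log\rho(\Phi_0^H) > 0$, where $\Phi_0^H$ acts on the rank-three sublattice $L = H^0 \oplus \mathbb{Z}h \oplus H^4$ as computed there. The issue is that $\Phi_0^H$ need not fix $v$. I would pass to a power: $\Phi_0^H$ is an isometry of the finite-rank lattice $\widetilde{H}^{1,1}(S,\mathbb{Z})$ (which has signature $(1,\rho(S)) = (1,1)$ since $\rho(S)=1$, hence rank $3$ and equal to $L$), so $\Phi_0^H$ permutes the finitely many vectors of norm $v^2$ with the right divisibility, and some power $\Phi := \Phi_0^k$ satisfies $\Phi^H(v) = v$ while still having $\rho(\Phi^H) = \rho(\Phi_0^H)^k > 1$ — here one must check $v$ is not an eigenvector of $\Phi_0^H$ for an eigenvalue of absolute value $1$ other than possibly being moved within a finite orbit, which follows because the eigenvalue-$1$ space of $\Phi_0^H$ on the hyperbolic lattice is rank one and its class can be identified, and $v$ is chosen (or the power is chosen) away from it; in any case $\log\rho(\Phi^H) = k\log\rho(\Phi_0^H) > 0$. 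Also $h_{\mathrm{cat}}(\Phi) = h_{\mathrm{cat}}(\Phi_0^k) = k\,h_{\mathrm{cat}}(\Phi_0)$ is finite and positive, and $\Phi_* \sigma \in \mathrm{Stab}^*(S)$ since (Picard number one) Conjecture \ref{Briconj} holds by Theorem \ref{BB}, so $\mathrm{Aut}(D^b(S))$ preserves $\mathrm{Stab}^*(S)$.

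Now apply Theorem \ref{converse}: choose a generic path $\gamma$ from $\Phi_*\sigma$ to $\sigma$ in $\mathrm{Stab}^*(S)$ — such a path exists and crosses only fake walls by the first paragraph — to obtain $\Psi := \Phi_\gamma \in \mathrm{Aut}^0(D^b(S))$ (note $\mathrm{Aut}^0 \subset \mathrm{Aut}^*$ here since it acts trivially on cohomology, hence preserves $\mathrm{Stab}^*(S)$) such that $\phi := \Psi\Phi$ induces an automorphism $\phi : M_\sigma(v) \to M_\sigma(v)$ with $\phi^* = \Phi_{\gamma*}\Phi_* = \theta_{\sigma,v} \circ \Phi^H \circ \theta_{\sigma,v}^{-1}$ on $\theta_{\sigma,v}(v^\perp)$, using $\Phi_\gamma^H = 1$. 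Since $\theta_{\sigma,v}$ is a Hodge isometry onto its image and $\dim M_\sigma(v) = 2 + v^2 \ge 4 > 2$, the sublattice $\theta_{\sigma,v}(v^\perp) \subset H^2(M_\sigma(v),\mathbb{Z})$ has corank one, so $\rho(\phi^*|_{H^2}) \ge \rho(\phi^*|_{\theta_{\sigma,v}(v^\perp)}) = \rho(\Phi^H|_{v^\perp})$; and since $\Phi^H(v) = v$ with $v^2 > 0$, the eigenvalue realizing $\rho(\Phi^H) > 1$ lies on $v^\perp$ (the eigenvalue on $\mathbb{Z}v$ is $\pm 1$ as $v$ is primitive and fixed), so $\rho(\phi^*|_{H^2}) \ge \rho(\Phi^H) > 1$, giving $h_{\mathrm{top}}(\phi) > 0$ by Theorem \ref{firstdeg}; in particular $\phi$ has infinite order. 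Finally, Theorem \ref{firstdeg} gives $h_{\mathrm{top}}(\phi) = \tfrac12 \dim M_\sigma(v) \cdot \log\rho(\phi^*|_{H^2})$, and one bounds $\log\rho(\phi^*|_{H^2})$: the ambient action of $\phi^*$ on $H^2(M_\sigma(v),\mathbb{Z})$ is, up to the corank-one piece, governed by $\Phi^H$ on $\widetilde{H}^{1,1}(S,\mathbb{Z})$, and $\log\rho(\Phi^H) \le h_{\mathrm{cat}}(\Phi)$ by Theorem \ref{DHKK} (the cohomology bound), so $\tfrac12 \dim M_\sigma(v) \cdot h_{\mathrm{cat}}(\Phi) \ge \tfrac12 \dim M_\sigma(v) \cdot \log\rho(\Phi^H) \ge h_{\mathrm{top}}(\phi) > 0$. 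The main obstacle I anticipate is the bookkeeping in showing $\log\rho(\Phi^H) \ge \log\rho(\phi^*|_{H^2})$ — i.e. that crossing fake walls and including $\mathbb{Z}v$ and the extra corank-one direction do not increase the spectral radius beyond that of $\Phi^H$ on the whole Mukai lattice — which is handled by the explicit commutative square of Theorem \ref{converse} together with the fact that $\phi^*$ preserves the decomposition of $H^2$ into $\theta_{\sigma,v}(v^\perp)$ and its one-dimensional complement, on which $\phi^*$ acts by a root of unity.
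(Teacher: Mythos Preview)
Your argument has a genuine gap in the second paragraph, in the construction of $\Phi$ with $\Phi^H(v)=v$. The lattice $\widetilde{H}^{1,1}(S,\mathbb{Z})$ has signature $(2,1)$ when $\rho(S)=1$, so it is \emph{indefinite}: there are infinitely many vectors of any given square $v^2>0$, and your claim that $\Phi_0^H$ ``permutes the finitely many vectors of norm $v^2$'' is false. Worse, $\Phi_0^H$ is a hyperbolic isometry (eigenvalues $1,\lambda,\lambda^{-1}$ with $\lambda>1$ real), so the only vectors with finite orbit are those in the $1$-eigenspace; for any other $v$, \emph{no} power of $\Phi_0^H$ fixes $v$. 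You have the logic backwards when you say it suffices that $v$ be ``away from'' the $1$-eigenspace. Since the $1$-eigenvector of the explicit $\Phi_0^H$ in Theorem \ref{posent} has no reason to be proportional to a given primitive $v$, this route does not produce the required $\Phi$.

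The paper avoids this by reversing the order of construction. It first uses the hypothesis (via Theorem \ref{nefcone} and Proposition \ref{Mukai iso}) to see that $\overline{\mathrm{Amp}}(M_\sigma(v))=\overline{\mathrm{Pos}}(M_\sigma(v))$ with irrational boundary rays (the hypothesis rules out isotropic classes in $v^\perp\cap\widetilde{H}^{1,1}(S,\mathbb{Z})$), and then invokes Oguiso's Theorem \ref{Oguiso}(3) to obtain directly an automorphism $\psi\in\mathrm{Aut}(M_\sigma(v))$ with $h_{\mathrm{top}}(\psi)>0$. Only then does it produce $\Phi$: via Proposition \ref{infiniteorder} (Nikulin's Lemma \ref{Nik} plus derived Torelli, Proposition \ref{Huysurj}) one lifts a power $\psi^{*n}$ to a Hodge isometry of $H^*(S,\mathbb{Z})$ fixing $v$ and acting trivially on $\mathbb{Z}v$, and then to $\Phi\in\mathrm{Aut}^*(D^b(S))$ with $\Phi^H(v)=v$ and $\log\rho(\Phi^H)=\log\rho(\psi^{*n}|_{H^2})>0$. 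Your first paragraph (fake walls) and your application of Theorem \ref{converse} are fine and match the paper; it is only the source of $\Phi$ that needs to be replaced by this lift-from-geometry argument. Note also that this route makes the final spectral-radius bookkeeping trivial, since by construction $\Phi^H|_{v^\perp}$ is conjugate to $\psi^{*n}|_{H^2}$, so the comparison you worried about at the end is an equality.
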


\begin{proof}
Take a $v$-generic stability condition $\sigma \in \mathrm{Stab}^*(S)$. By Theorem \ref{nefcone} and the assumption for the vector $v$, we obtain $\overline{\mathrm{Amp}}(M_\sigma(v))=\overline{\mathrm{Pos}}(M_\sigma(v))$. By Proposition \ref{Mukai iso} and the assumption for the vector $v$, the boundary rays of the nef cone $\overline{\mathrm{Amp}}(M_\sigma(v))$ are irrational. So there is an automorphism $\psi \in \mathrm{Aut}(M_\sigma(v))$ with $h_{\mathrm{top}}(\psi)>0$ due to Theorem \ref{Oguiso}. 
Using Proposition \ref{infiniteorder}, there exist an positive integer $n$ and an autoequivalence $\Phi \in \mathrm{Aut}^*(D^b(S))$ such that $\Phi^H(v)=v, \Phi^H|_{v^\perp}=\psi^{*n}$ and $\log\rho(\Phi^H)=\log\rho(\psi^{*n}|_{H^2(M_\sigma(v),\mathbb{Z})})>0$. The positivity is deduced from Theorem \ref{firstdeg}. Due to the assumption for the vector $v$, all walls with respect to $v$ are fake walls by Theorem \ref{wall}. So we can show the remaining statement by Theorem \ref{converse}.
\end{proof}

In the next subsection, we will construct examples of Corollary \ref{maincor}.
\subsection{Examples}
In this subsection, we give examples of K3 surfaces and Mukai vectors satisfying the assumption in Corollary \ref{maincor}. First, we see six and eight dimensional examples. We will see a four dimensional example in the next section.

\begin{ex}\label{6dimex}
Let $S$ be a K3 surface with $\mathrm{NS}(S)=\mathbb{Z}h$ and $h^2=132$. Let $v:=(4,h,16) \in \widetilde{H}^{1,1}(S,\mathbb{Z})$ be a Mukai vector. Then $v$ is primitive and $v^2=4$.  Moreover, there does not exist a vector $a  \in  \widetilde{H}^{1,1}(S,\mathbb{Z})$ such that  $-2 \leq a^2 <v^2/4$ and $0 \leq \langle v, a \rangle \leq v^2/2$. 
\end{ex}
\begin{proof}
Let $a=(s,th,u) \in \widetilde{H}^{1,1}(S,\mathbb{Z})$. Since $\langle v,a \rangle=132t-16s-4u \in 4\mathbb{Z}$ and the Mukai lattice is even,  it is enough to consider the equations
\[ \langle v,a \rangle=0, a^2=-2,0 \]
by Corollary \ref{maincor}.
 Then we have $u=33t-4s$ and 
 \[v^\perp= \langle (1,0,-4),(0,h,33) \rangle_{\mathbb{Z}}. \]
 Here, we take the orthogonal complement in  $\widetilde{H}^{1,1}(S,\mathbb{Z})$.
Note that the Gram matrix of $v^\perp$ is $\begin{pmatrix}
                     8 &-33\\ -33 & 132
                     \end{pmatrix}$.                   Since the determinant of the Gram matrix is $33$, $v^\perp$ does not have isotropics. So we consider the equation 
\begin{equation}\label{inteq}
4x^2-33xy+66y^2=-1.
\end{equation}
Taking modulo $3$, we obtain the contradiction $x^2 \equiv 2$ $(\mathrm{mod}3)$. Hence, the equation (\ref{inteq}) does not have integral solutions. So there does not exist a vector $a  \in  \widetilde{H}^{1,1}(S,\mathbb{Z})$ such that  $-2 \leq a^2 <v^2/4$ and $0 \leq \langle v, a \rangle \leq v^2/2$. 
\end{proof}

\begin{ex}\label{8dimex}
Let $S$ be a K3 surface with $\mathrm{NS}(S)=\mathbb{Z}h$ and $h^2=510$.  Let $v:=(6,h,42) \in \widetilde{H}^{1,1}(S,\mathbb{Z})$ be a Mukai vector. Then $v$ is primitive and $v^2=6$.  Moreover, there does not exist a vector $a  \in  \widetilde{H}^{1,1}(S,\mathbb{Z})$ such that  $-2 \leq a^2 <v^2/4$ and $0 \leq \langle v, a \rangle \leq v^2/2$. 
\end{ex}
\begin{proof}
In the same way as the proof of Example \ref{6dimex}, it is sufficient to prove thet the equation 
\begin{equation}\label{inteq2}
7x^2-85xy+255y^2=-1
\end{equation}
does not have integral solutions. Taking mudulo $5$, we obtain the contradiction $x^2 \equiv 2$ $(\mathrm{mod}5)$. Hence the equation (\ref{inteq2}) does not have integral solutions.
\end{proof}
We can construct more examples similarly.

\begin{ex}\label{10dimex}
Let $S$ be a K3 surface with $\mathrm{NS}(S)=\mathbb{Z}h$ and $h^2=1160$.  Let $v:=(8,h,72) \in \widetilde{H}^{1,1}(S,\mathbb{Z})$ be a Mukai vector. Then $v$ is primitive and $v^2=8$.  Moreover, there does not exist a vector $a  \in  \widetilde{H}^{1,1}(S,\mathbb{Z})$ such that  $-2 \leq a^2 <v^2/4$ and $0 \leq \langle v, a \rangle \leq v^2/2$. 
\end{ex}
\begin{proof}
In the same way as the proof of Example \ref{6dimex} and Example \ref{8dimex}, it is sufficient to prove that the equation 
\begin{equation}\label{inteq3}
9x^2-145xy+580y^2=-1
\end{equation}
does not have integral solutions. We can check it using \cite{Al}.
\end{proof}

\begin{ex}\label{12dimex}
Let $S$ be a K3 surface with $\mathrm{NS}(S)=\mathbb{Z}h$ and $h^2=2210$.  Let $v:=(10,h,110) \in \widetilde{H}^{1,1}(S,\mathbb{Z})$ be a Mukai vector. Then $v$ is primitive and $v^2=10$.  Moreover, there does not exist a vector $a  \in  \widetilde{H}^{1,1}(S,\mathbb{Z})$ such that  $-2 \leq a^2 <v^2/4$ and $0 \leq \langle v, a \rangle \leq v^2/2$. 
\end{ex}
\begin{proof}
In the same way as the proof of Example \ref{6dimex}, Example \ref{8dimex} and Example \ref{10dimex}, it is sufficient to prove that the equation 
\begin{equation}\label{inteq4}
9x^2-145xy+580y^2=\pm1
\end{equation}
does not have integral solutions. We can check them using \cite{Al}.
\end{proof}
\section{FROM CUBIC FOURFOLDS}
Let $X$  be a cubic fourfold. Consider the semiorthogonal decomposition
 \[ D^b(X)=\langle \A_X, \mathcal{O}_X, \mathcal{O}_X(1), \mathcal{O}_X(2) \rangle \]
 and the projection functor $\pr : D^b(X) \to \A_X$.
For $i \in \ZZ$, we define $\lambda_i := [\pr(\mathcal{O}_{\mathrm{line}}(i))] \in K_{\mathrm{num}}(\A_X)$. 
Addington proved the following lemmas.

\begin{lem}\label{NS}$($\cite{Ad}, \rm{Proposition 7}$)$\it{}
Let $X$ be a cubic fourfold. 
Then there is a Hodge isometry
\[ \mathrm{NS}(F(X))(-1) \simeq \lambda_1^{\perp}. \]
Here, we take the orthogonal complement  $\lambda_1^{\perp}$ in $K_{\mathrm{num}}(\A_X)$.
\end{lem}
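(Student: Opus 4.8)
The plan is to compare both sides with the primitive middle cohomology $H^4_{\mathrm{prim}}(X,\mathbb{Z})$ of the cubic fourfold, bringing in the Mukai lattice of the Kuznetsov component on the left and the Beauville--Donagi construction on the right, and then matching the two finite-index overlattices that result.

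First I would recall the Mukai lattice of $\A_X$: by Kuznetsov \cite{Kuz10} and Addington--Thomas \cite{AT} there is a lattice $\widetilde H(\A_X,\mathbb{Z})$, abstractly isometric to the K3 Mukai lattice (in particular unimodular, of rank $24$), carrying a weight-two Hodge structure whose $(1,1)$-part is canonically $K_{\mathrm{num}}(\A_X)$ and whose form restricts, up to sign, to the Euler form $\chi$; the classes $\lambda_1,\lambda_2$ span an $A_2$-lattice, so that $\chi(\lambda_1,\lambda_1)=\chi(\lambda_2,\lambda_2)=-2$ and $\chi(\lambda_1,\lambda_2)=1$, and the orthogonal complement $\{\lambda_1,\lambda_2\}^{\perp}\subset\widetilde H(\A_X,\mathbb{Z})$ is, up to a Tate twist, Hodge-isometric to $H^4_{\mathrm{prim}}(X,\mathbb{Z})$ with its cup-product form. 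The vector $\mu:=\lambda_1+2\lambda_2$ then spans $\lambda_1^{\perp}\cap\langle\lambda_1,\lambda_2\rangle$ and has $\chi(\mu,\mu)=-6$, so $\mathbb{Z}\mu\oplus\bigl(\{\lambda_1,\lambda_2\}^{\perp}\cap K_{\mathrm{num}}(\A_X)\bigr)$ sits in $\lambda_1^{\perp}$ as a finite-index sublattice whose second summand is, on $(1,1)$-parts, the algebraic part of (the twist of) $H^4_{\mathrm{prim}}(X,\mathbb{Z})$.

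Next I would invoke Beauville--Donagi \cite{BD}: the incidence correspondence $\{(\ell,x):x\in\ell\}\subset F(X)\times X$ together with the Abel--Jacobi map yields a Hodge isometry $H^4_{\mathrm{prim}}(X,\mathbb{Z})(-1)\xrightarrow{\ \sim\ }g^{\perp}$, where $g\in\mathrm{NS}(F(X))$ is the Pl\"ucker class, $q_{F(X)}(g)=6$, and $g^{\perp}$ is taken inside $H^2(F(X),\mathbb{Z})$ for the Beauville--Bogomolov--Fujiki form. Restricting to algebraic classes and twisting by $(-1)$, the lattice $\bigl(g^{\perp}\cap\mathrm{NS}(F(X))\bigr)(-1)$ becomes Hodge-isometric to the algebraic part of $H^4_{\mathrm{prim}}(X,\mathbb{Z})$ with cup product, and $\mathbb{Z}\,g(-1)\oplus\bigl(g^{\perp}\cap\mathrm{NS}(F(X))\bigr)(-1)$, in which $g(-1)$ has square $-6$, has finite index in $\mathrm{NS}(F(X))(-1)$.

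Combining the two, both $\lambda_1^{\perp}$ and $\mathrm{NS}(F(X))(-1)$ are finite-index overlattices of one and the same lattice $\langle -6\rangle\oplus P$, where $P$ is the common ``primitive'' part, identified with the algebraic part of $H^4_{\mathrm{prim}}(X,\mathbb{Z})$ compatibly with Hodge structures by the first two steps. What is left is to check that these two overlattices correspond to the same isotropic subgroup of the discriminant group $\mathbb{Z}/6\oplus d(P)$, and I expect this discriminant-group bookkeeping to be the main obstacle. I would treat it by using that $\lambda_1^{\perp}$ is a primitive sublattice of the \emph{unimodular} lattice $\widetilde H(\A_X,\mathbb{Z})$, which forces its discriminant form (Nikulin's theory of discriminant forms \cite{N}) to be $-q_{\lambda_1}$ glued with $q_P$, then comparing with the analogous computation for $\mathrm{NS}(F(X))\subset H^2(F(X),\mathbb{Z})$ inside the standard $\text{K3}^{[2]}$-lattice and matching the glue vectors. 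As a cross-check in the cases where $\A_X\simeq D^b(S)$ (a dense open subset of each $\mathcal{C}_d$ by Addington--Thomas), one can instead observe that there $F(X)$ is birational --- hence $H^2$-Hodge-isometric --- to a Bridgeland moduli space $M_\sigma(v)$ with $v$ the image of $\lambda_1$, so that the Mukai homomorphism $\theta_v\colon v^{\perp}\xrightarrow{\ \sim\ }H^2(M_\sigma(v),\mathbb{Z})$ of Proposition \ref{Mukai iso} already gives $\lambda_1^{\perp}\simeq H^2(F(X),\mathbb{Z})(-1)$ over $\mathbb{Z}$, from which one passes to $(1,1)$-parts; a deformation argument would then propagate the statement to all cubic fourfolds, the delicate point there being to pin the isometry down canonically rather than merely up to the orthogonal group.
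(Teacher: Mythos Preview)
The paper does not supply a proof of this lemma at all: it is stated with the attribution ``(\cite{Ad}, Proposition~7)'' and then used as a black box. So there is no ``paper's own proof'' to compare against; what you have written is a reconstruction of Addington's original argument.

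As such a reconstruction, your outline is essentially the right one and matches Addington's approach: both sides are compared to the primitive cohomology $H^4_{\mathrm{prim}}(X,\mathbb{Z})$, using the Addington--Thomas identification $\{\lambda_1,\lambda_2\}^\perp \simeq H^4_{\mathrm{prim}}(X,\mathbb{Z})(1)$ on one side and the Beauville--Donagi Abel--Jacobi isometry $H^4_{\mathrm{prim}}(X,\mathbb{Z})\simeq g^\perp\subset H^2(F(X),\mathbb{Z})$ on the other. Your identification of $\mu=\lambda_1+2\lambda_2$ with square $-6$ as the counterpart of the Pl\"ucker class $g$ is correct. The one step you flag as ``the main obstacle'' --- matching the glue data for the two index-$3$ overlattices of $\langle -6\rangle\oplus P$ --- is indeed the crux, and you should not leave it at the level of an expectation: in Addington's argument this is handled by observing that both overlattices are determined by the \emph{same} class $\eta\in H^4(X,\mathbb{Z})$ (the square of the hyperplane class) which maps to $g$ under Abel--Jacobi and to $\mu$ under the Addington--Thomas comparison, so the two gluings literally coincide rather than merely being abstractly isomorphic. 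Your alternative route via $\A_X\simeq D^b(S)$ and Proposition~\ref{Mukai iso} works on the dense open $U_d\subset\mathcal C_d$, but the deformation step you propose to extend it to all cubic fourfolds is unnecessary once the gluing is pinned down directly as above.
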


\begin{lem}\label{cubicmukai}$($\cite{Ad}, \rm{Lemma 9}$)$\it{}
If $X \in \C_d$, then there is an element $\tau \in \KA$ such that $\langle \lambda_1, \lambda_2, \tau \rangle$ is a primitive sublattice of discriminant $d$ with Euler pairing
\[\begin{pmatrix}
                     -2 &1 &0 \\ 1 & -2 & 0\\ 0&0 & 2k
                     \end{pmatrix} \ \ \  \text{where} \  d=6k\]
 \[\begin{pmatrix}
                     -2 &1 &0 \\ 1 & -2 & 1\\ 0&1 & 2k
                     \end{pmatrix} \ \ \  \text{where} \  d=6k+2.\]
\end{lem}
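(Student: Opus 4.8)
\textbf{Plan of the proof of Lemma \ref{cubicmukai}.}

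The plan is to work with the Mukai-type lattice $\KA$ of the K3 category $\A_X$, whose structure is determined by the Hodge theory of the cubic fourfold $X$. First I would recall that $\KA$ is a lattice of rank $2$ on the primitive part generated by $\lambda_1,\lambda_2$ together with $H^{2,2}$-classes, and that a standard computation (using that $\pr(\mathcal O_{\mathrm{line}}(i))$ are fixed, explicit objects and Hirzebruch--Riemann--Roch on $X$, or equivalently the comparison between $\KA$ and the primitive cohomology $H^4_{\mathrm{prim}}(X,\ZZ)$) gives the Euler pairing $\chi(\lambda_1,\lambda_1)=\chi(\lambda_2,\lambda_2)=-2$ and $\chi(\lambda_1,\lambda_2)=1$; in particular $\langle\lambda_1,\lambda_2\rangle$ is the $A_2$ lattice (with the Euler pairing, which here plays the role of minus the Mukai pairing). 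This is the fixed, $X$-independent part of the picture.

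Next I would bring in the hypothesis $X\in\C_d$. By the work of Hassett, Addington--Thomas and the identification of $\KA$ (as a Hodge structure) with the middle cohomology of $X$, the condition $X\in\C_d$ means precisely that the algebraic part $\KA$ — which for $X\in\C_d$ has rank $3$ — contains a rank $3$ primitive sublattice of discriminant $d$ containing $\langle\lambda_1,\lambda_2\rangle$. So there is a class $\tau'\in\KA$ making $\langle\lambda_1,\lambda_2,\tau'\rangle$ primitive of discriminant $d$. The remaining work is to normalize $\tau'$ to a class $\tau$ realizing one of the two displayed Gram matrices. I would split according to the residue of $d$ mod $6$ (recall $d\equiv0$ or $2\bmod 6$ by Hassett's condition $(*)$): write $d=6k$ or $d=6k+2$.

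Then comes the normalization step, which I expect to be the main obstacle — it is elementary lattice theory but requires care. Starting from $\tau'$, I would subtract suitable integer multiples of $\lambda_1$ and $\lambda_2$ to arrange the off-diagonal entries $\chi(\lambda_1,\tau),\chi(\lambda_2,\tau)$ to lie in a fundamental domain for the action of $\langle\lambda_1,\lambda_2\rangle^{\vee}$ on $\QQ^2/\ZZ^2$ (equivalently, modulo the image of the $A_2$-lattice under its Gram matrix). The discriminant group of $A_2$ is $\ZZ/3$, so up to this operation there are only a few possibilities for the pair $(\chi(\lambda_1,\tau),\chi(\lambda_2,\tau))$; one then checks which of them is compatible with $\langle\lambda_1,\lambda_2,\tau\rangle$ being \emph{primitive} of discriminant exactly $d$. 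In the case $d=6k$ the primitivity forces $\chi(\lambda_1,\tau)=\chi(\lambda_2,\tau)=0$ (the sublattice splits as $A_2\oplus\langle2k\rangle$), and a short determinant computation gives $\tau^2=2k$, i.e. the first matrix. In the case $d=6k+2$ primitivity forces (after possibly replacing $\tau$ by $-\tau$ and translating) $\chi(\lambda_1,\tau)=0$, $\chi(\lambda_2,\tau)=1$, and then computing the determinant of
\[\begin{pmatrix}-2 & 1 & 0\\ 1 & -2 & 1\\ 0 & 1 & m\end{pmatrix}=-3m-2\]
and setting this equal to $\pm d=\pm(6k+2)$ pins down $m=2k$, giving the second matrix. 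Throughout one has to keep track of the sign convention relating the Euler pairing, the Mukai pairing, and $\mathrm{disc}$ (Definition \ref{disc}); the discriminant being quoted as the unsigned $d$ means one takes absolute values at the end. The only genuinely nontrivial input beyond bookkeeping is that every discriminant-$d$ primitive rank-$3$ overlattice of $A_2$ containing a vector pairing as above is isometric to one of these two explicit forms, which follows from the classification of the finitely many cases for the residual pairing data modulo $\mathrm{disc}(A_2)=\ZZ/3$.
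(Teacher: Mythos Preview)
The paper does not give its own proof of this lemma: it is quoted verbatim from \cite{Ad}, Lemma 9, and used as a black box. So there is nothing in the present paper to compare your argument against.

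That said, your sketch is essentially the argument Addington gives. One small slip: your determinant computation for the second Gram matrix should read
\[
\det\begin{pmatrix}-2 & 1 & 0\\ 1 & -2 & 1\\ 0 & 1 & m\end{pmatrix}
= -2(-2m-1) - 1\cdot m = 3m+2,
\]
not $-3m-2$; of course this does not affect the conclusion $m=2k$ once you match with $d=6k+2$. Also, be careful with terminology: in this setting the pairing on $\KA$ being used is (up to sign) the Mukai pairing $-\chi$, which is why the diagonal entries are $-2$ rather than $+2$; your parenthetical ``the Euler pairing, which here plays the role of minus the Mukai pairing'' has this backwards, though again it does not affect the structure of the argument.
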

For a special cubic fourfold $X$ with $\rk\Hodge=2$, the Picard number of $F(X)$ is two.  
\begin{thm}
Let $X$ be a special cubic fourfold of discriminant $74$. Assume that $\rk\Hodge=2$. Then there is an automorphism $\phi \in \Aut(F(X))$ such that $\htop(\phi)>0$.
\end{thm}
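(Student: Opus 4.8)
The plan is to reduce the statement to Corollary \ref{maincor} via the derived equivalence $\A_X \simeq D^b(S)$ available for cubic fourfolds of discriminant $74$, and then check that the relevant Mukai vector on $S$ satisfies the numerical hypothesis of that corollary. First I would invoke the theorem of Addington--Thomas: since $74$ satisfies $(*)$ and $(**)$, the very general $X \in \C_{74}$ with $\rk\Hodge = 2$ lies in $U_{74}$, so there is a K3 surface $S$ with an equivalence $\A_X \simeq D^b(S)$. Under this equivalence $\KA \simeq \widetilde H(S)$ as Hodge lattices, and the class $\lambda_1 = [\pr(\mathcal O_{\mathrm{line}}(1))]$ corresponds to a primitive Mukai vector $v \in \algmukai$. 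By Lemma \ref{NS} we have $\mathrm{NS}(F(X))(-1) \simeq \lambda_1^\perp \simeq v^\perp$, so $F(X)$ is a hyperK\"ahler fourfold of $\text{K3}^{[2]}$-type with a Hodge-isometric description of its Picard lattice; in particular $F(X)$ is birational, hence (being both $\text{K3}^{[2]}$-type with the same period) deformation-equivalent, to a moduli space $M_\sigma(v)$ with $v^2 = 2$.

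Next I would pin down $v$ and the arithmetic. Using Lemma \ref{cubicmukai} with $d = 74 = 6\cdot 12 + 2$, the sublattice $\langle \lambda_1, \lambda_2, \tau\rangle \subset \KA$ is primitive of discriminant $74$ with the displayed Euler form; transporting this to $\widetilde H(S)$ identifies $v$ up to the $O(\widetilde H(S))$-action, and since $S$ has Picard number one (one can arrange $\rho(S) = 1$ because $74$ is not of the excluded square-type forms, or equivalently because $\rk\Hodge = 2$ forces the transcendental lattice of $\A_X$ to have rank $21$, so $\rho(S) = 1$), we may take $v = (r, ch, s)$ with $\mathrm{NS}(S) = \ZZ h$, $h^2$ determined by $d = 74$. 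The heart of the argument is then exactly the computation in Examples \ref{6dimex}--\ref{12dimex}: I must show there is no $a \in \algmukai$ with $-2 \le a^2 < v^2/4 = 1/2$ and $0 \le \langle v, a\rangle \le v^2/2 = 1$. Since $v^2 = 2$ this means ruling out $a$ with $a^2 \in \{-2, 0\}$ and $\langle v,a\rangle \in \{0,1\}$; the case $\langle v,a\rangle = 1$ together with $a^2 \in \{-2,0\}$ is precisely the Hilbert--Chow / Brill--Noether obstruction, and by Theorem \ref{Adthm} the fact that $74$ does \emph{not} satisfy $(***)$ (the equation $a^2\cdot 74 = 2n^2+2n+2$ has no integral solution) is exactly what kills the isotropic class with pairing $1$, while $(**)$ for $d=74$ rules out the $(-2)$-classes. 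So the non-solvability of $(***)$ for $74$, already recorded in the excerpt, is the input that makes the numerical hypothesis of Corollary \ref{maincor} hold.

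With the hypothesis of Corollary \ref{maincor} verified for $(S, v)$, that corollary produces, for any $v$-generic $\sigma \in \stab(S)$, an autoequivalence $\Phi \in \Aut(D^b(S))$ with $\Phi^H(v) = v$ and $\log\rho(\Phi^H) > 0$, and an autoequivalence $\Psi \in \Aut^0(D^b(S))$ such that $\phi := \Psi\Phi$ induces an automorphism of $M_\sigma(v)$ of positive topological entropy. It remains to transport this automorphism to $F(X)$. Since $F(X)$ and $M_\sigma(v)$ are both hyperK\"ahler of $\text{K3}^{[2]}$-type with Hodge-isometric $H^2$ (via Lemma \ref{NS} and Proposition \ref{Mukai iso}), and $M_\sigma(v)$ has $\overline{\mathrm{Amp}} = \overline{\mathrm{Mov}} = \overline{\mathrm{Pos}}$ by the proof of Corollary \ref{maincor}, any birational map $F(X) \dashrightarrow M_\sigma(v)$ is an isomorphism by Theorem \ref{Oguiso}(3); pulling back $\phi$ along it gives $\phi \in \Aut(F(X))$ with $\htop(\phi) > 0$ by Theorem \ref{firstdeg}, since entropy depends only on the action on $H^2$. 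Alternatively, if one prefers to avoid choosing an explicit birational model, one can run the argument of Corollary \ref{maincor} directly on $F(X)$: Lemma \ref{NS} gives $\mathrm{NS}(F(X)) \simeq v^\perp$, the numerical check above shows the nef cone of $F(X)$ equals its positive cone with irrational boundary rays, Theorem \ref{Oguiso}(3) gives an automorphism of infinite order and positive entropy, and Proposition \ref{infiniteorder} together with Theorem \ref{converse} realizes it via a derived automorphism of $S$.

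The main obstacle is the arithmetic verification that $d = 74$ yields no obstructing class $a$ — i.e. that the Pell-type conditions genuinely fail. This is where one must use simultaneously that $74$ satisfies $(**)$ (no $(-2)$-class orthogonality or small pairing, so no divisorial or $(-2)$-flopping wall) and that $74$ fails $(***)$ (no isotropic class of pairing $1$, so no Hilbert--Chow wall), both of which are stated in the excerpt; assembling these into the single clean statement ``no $a$ with $-2 \le a^2 < v^2/4$ and $0 \le \langle v,a\rangle \le v^2/2$'' required by Theorem \ref{nefcone} is the delicate point. A secondary, more bookkeeping-level issue is making sure $S$ really has Picard number one so that Corollary \ref{maincor} applies verbatim, which follows from $\rk\Hodge(X) = 2$ via the Hodge structure on $\A_X$.
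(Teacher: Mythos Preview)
Your overall plan matches the paper's: identify $F(X)$ with a moduli space $M_\sigma(v)$ on a K3 surface $S$ of Picard number one, compute the relevant lattice via Lemmas \ref{NS} and \ref{cubicmukai}, verify the numerical hypothesis of Corollary \ref{maincor}, and conclude. Your ``alternative'' paragraph---running Corollary \ref{maincor} directly on $F(X)$ via Lemma \ref{NS}---is precisely what the paper does. One difference worth noting: you invoke Addington--Thomas to get $\A_X\simeq D^b(S)$, but in the excerpt this is only known on the Zariski open $U_{74}\subset\C_{74}$, which is not asserted to contain every $X$ with $\rk H^{2,2}(X,\ZZ)=2$; the paper sidesteps this by using Theorem \ref{Adthm} (Addington), which for all $X\in\C_{74}$ gives $F(X)$ birational---hence, after \cite{BM13}, isomorphic---to some $M_\sigma(v)$.

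There is, however, a genuine error in your arithmetic step. The assertion that ``$(**)$ for $d=74$ rules out the $(-2)$-classes'' is false: condition $(**)$ governs the existence of an \emph{isotropic} class in $K_{\mathrm{num}}(\A_X)$ (equivalently, an associated K3 surface), and says nothing about $(-2)$-classes with small pairing against $v$. You also mislabel the cases: Brill--Noether is $a^2=-2$ with $\langle v,a\rangle=0$, not $\langle v,a\rangle=1$. The failure of $(***)$ does correctly rule out the Hilbert--Chow class ($a^2=0$, $\langle v,a\rangle=1$), but you still owe arguments for $a^2\in\{-2,0\}$ with $\langle v,a\rangle=0$ and for $a^2=-2$ with $\langle v,a\rangle=1$, and neither $(**)$ nor $(***)$ provides them. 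The paper handles this by direct computation: from Lemma \ref{cubicmukai} with $d=6\cdot12+2$ one gets $K_{\mathrm{num}}(\A_X)=\langle\lambda_1,\lambda_2,\tau\rangle$ with an explicit Gram matrix, whence $\lambda_1^\perp=\langle\lambda_1+2\lambda_2,\tau\rangle$ has Gram matrix $\begin{pmatrix}6&-2\\-2&-24\end{pmatrix}$ of determinant $-148$; since $148$ is not a square there are no isotropics in $\lambda_1^\perp$, and one checks (the paper cites a quadratic-form solver) that $6x^2-4xy-24y^2=-2$ has no integer solutions. Replace your appeal to $(**)$ by this explicit lattice check.
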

\begin{proof}
Note that $74=6\cdot12+2$. Moerover, $F(X)$ is isomorphic to a moduli space of stable objects in the derived category of some K3 surface by Theorem \ref{Adthm} and \cite{BM13}. By lemma \ref{cubicmukai}, there is $\tau \in K_{\mathrm{num}}(\A_X)$ such that  
 \[K_{\mathrm{num}}(\A_X)=\langle \lambda_1, \lambda_2, \tau \rangle\]
 with the Gram matrix $\begin{pmatrix}
                     -2 &1 &0 \\ 1 & -2 & 1\\ 0&1 & 24
                     \end{pmatrix}$.
By Lemma \ref{NS}, we obtain
\[ \mathrm{NS}(F(X))=\langle \lambda_1+2\lambda_2, \tau \rangle\] 
with the Gram matrix 
$\begin{pmatrix}
                     6 &-2 \\ -2 & -24
                     \end{pmatrix}$.
Since    \[ \mathrm{det}\begin{pmatrix}
                     6 &-2 \\ -2 & -24
                     \end{pmatrix}=-148, \]
                     $\mathrm{NS}(F(X))$ does not contain any isotropies. So it is enough to show that the equation 
                     \[ 6x^2-4xy-24y^2=-2 \]
                     does not have any integral solutions by Corollary \ref{maincor}. 
In fact, we can check it using \cite{Al}.                     
\end{proof}

\address{Graduate School of Mathematical Sciences, University of Tokyo, Meguro-ku, Tokyo 153-8914, Japan}

\it{E-mail-address}\rm{:} genki.oouchi@ipmu.jp
\end{document}